\newtheorem{theorem}{Theorem}[section]
\newtheorem{lemma}[theorem]{Lemma}
\newtheorem{proposition}[theorem]{Proposition}
\theoremstyle{definition}
\newtheorem{definition}[theorem]{Definition}
\newtheorem{assumption}[theorem]{Assumption}
\newtheorem{example}[theorem]{Example}
\theoremstyle{remark}
\newtheorem{remark}[theorem]{Remark}
\numberwithin{equation}{section}
\DeclareMathAlphabet{\mathsl}{OT1}{cmss}{m}{sl}
\SetMathAlphabet{\mathsl}{bold}{OT1}{cmss}{bx}{sl}
\DeclarePairedDelimiter{\abs}{\lvert}{\rvert}
\DeclarePairedDelimiter{\norm}{\lVert}{\rVert}
\DeclarePairedDelimiter{\scpr}{\langle}{\rangle}
\newcommand{\me}{\ensuremath{\mathrm{e}}}
\DeclareMathOperator{\capacity}{cap}
\DeclareMathOperator{\capacitym}{\mathbf{cap}}
\DeclareMathOperator{\mean}{\mathbb{E}}
\DeclareMathOperator{\Mean}{\mathrm{E}}
\DeclareMathOperator{\prob}{\mathbb{P}} 
\DeclareMathOperator{\Prob}{\mathrm{P}} 
\DeclareMathOperator{\var}{\mathbb{V}}
\newcommand{\ldef}{\ensuremath{\mathrel{\mathop:}=}}
\newcommand{\rdef}{\ensuremath{=\mathrel{\mathop:}}}
\newcommand{\overbar}[1]{\mkern 2mu\overline{\mkern-3mu#1\mkern-1mu}\mkern 2mu}
\newcommand{\indicator}{\mathbbm{1}}
\newtheorem{corollary}{Corollary}[section]
\definecolor{ttffqq}{rgb}{0.2,1,0}
\definecolor{ttttff}{rgb}{0.2,0.2,1}
\definecolor{eeeeee}{rgb}{0.93,0.93,0.93}
\definecolor{fftttt}{rgb}{1,0.2,0.2}
\definecolor{cqcqcq}{rgb}{0.75,0.75,0.75}
\begin{document}

\title[Metastability for the disordered Curie--Weiss--Potts model]{Metastability for the Curie--Weiss--Potts model with unbounded random interactions}

\author[J. Dubbeldam]{Johan L. A. Dubbeldam\,
\orcidlink{0000-0002-5891-998X}}
\address{Delft University of Technology}
\curraddr{Delft Institute of Applied Mathematics (DIAM), Mekelweg 4, 2628CD  Delft}
\email{J.L.A.Dubbeldam@tudelft.nl\\https://orcid.org/0000-0002-5891-998X}

\author[V. Lenz Burnier]{Vicente Lenz Burnier\,
\orcidlink{0009-0009-9497-444X}}
\address{Delft University of Technology}
\curraddr{Delft Institute of Applied Mathematics (DIAM), Mekelweg 4, 2628CD  Delft}
\email{V.Lenz@tudelft.nl\\https://orcid.org/0009-0009-9497-444X}

\author[E. Pulvirenti]{Elena Pulvirenti\,\orcidlink{0000-0003-4210-5954}}
\address{Delft University of Technology}
\curraddr{Delft Institute of Applied Mathematics (DIAM), Mekelweg 4, 2628CD  Delft}
\email{E.Pulvirenti@tudelft.nl\\https://orcid.org/0000-0003-4210-5954}

\author[M. Slowik]{Martin Slowik\,\orcidlink{0000-0001-5373-5754}}
\address{University of Mannheim}
\curraddr{Mathematical Institute, B6, 26, 68159 Mannheim}
\email{slowik@math.uni-mannheim.de\\ https://orcid.org/0000-0001-5373-5754}
\thanks{}

\subjclass[2010]{60K35, 60K37, 82B20, 82B44}

\keywords{Disorder, Glauber dynamics, metastability, random graphs, spin systems, Potts model}


\date{\today}

\dedicatory{}

\begin{abstract}
  We analyse the metastable behaviour of the disordered Curie--Weiss--Potts (DCWP) model subject to a Glauber dynamics. The model is a randomly disordered version of the mean-field $q$-spin Potts model (CWP), where the interaction coefficients between spins are general independent random variables. These random variables are chosen to have fixed mean (for simplicity taken to be $1$) and well defined cumulant generating function, with a fixed distribution not depending on the number of particles. The system evolves as a discrete-time Markov chain with single spin flip Metropolis dynamics at finite inverse temperature $\beta$. We provide a comparison of the metastable behaviour of the CWP and DCWP models, when $N \to \infty$. First, we establish the metastability of the CWP model and, using this result, prove metastability for the DCWP model (with high probability). We then determine the ratio between the metastable transition time for the DCWP model and the corresponding time for the CWP model. Specifically, we derive the asymptotic tail behavior and moments of this ratio. Our proof combines the potential-theoretic approach to metastability with concentration of measure techniques, the latter adapted to our specific context.  
\end{abstract}

\maketitle

\tableofcontents



\tableofcontents

\section{Introduction}
Over the past 50 years, the mathematical study of statical and dynamical aspects of disordered mean-field spin systems has attracted considerable interest. In this paper, we continue the analysis of metastable behaviour of these systems, as initiated in \cite{BMP21}, \cite{BdHM22}, \cite{dHJ21} and \cite{BdHMPS24}, by examining the disordered Curie--Weiss--Potts (DCWP) model with unbounded interactions. This model generalises the disordered mean-field Ising model to $q \geq 2$ spins. Here, ``disordered'' refers to the fact that spin interactions are independent and identically distributed random variables. These random variables are chosen to have a fixed mean and a well-defined cumulant generating function, with a distribution independent of the number of spins. In particular, this model also encompasses the Potts model on homogeneous dense random graphs. Specific examples that fit within our framework include the Potts model on Erd\H{o}s--R{\'e}nyi random graphs, the Potts model on multi-edge random graphs, and the Potts model with Gaussian noise. 

As a first result, we prove metastability in the sense of \cite{SS19} for the Curie--Weiss--Potts (CWP) model at fixed temperature in large volumes. Further, we show that metastability of the CWP model implies metastability of the DCWP model with respect to the same metastable sets, for almost all realisations of the random interactions. After identifying specific metastable sets for both models, we derive estimates for the ratio of mean metastable transition times in the DCWP and CWP models in the regime of large-volumes and fixed temperatures. These estimates are of two types: the first one provides insight into the tail behaviour, showing that, asymptotically in $N$, this ratio behaves like a random variable of order an exponential of a sub-Gaussian random variable. Moreover, we derive moment estimates for this random ratio, again in large volumes and at fixed temperatures. Our strategy is based on techniques developed in \cite{BdHMPS24} and \cite{SS19}. Where adaptations are necessary, references are provided to the specific results in these publications.

Our methods are based on the potential-theoretic approach to metastability. This method was initiated in \cite{BEGK01} and allows us to express mean metastable exit times in terms of capacities and weighted sums of the equilibrium potential, the latter frequently referred to as harmonic sums (for a general overview of this method, we refer to \cite{BdH15}). Estimates on the former can be obtained with the help of well-known variational principles, while estimates on the latter are generally more involved and, in this manuscript, rely on a new definition of metastability given by \cite{SS19}. This definition differs slightly from the standard one given in \cite{BdH15}, yet it provides crucial insights, particularly regarding the localisation of the harmonic sums around the initial metastable set. Additionally, our proof offers a strategy for verifying metastability in other similar mean-field models. Similar to previous works on metastability in disordered models, the use of concentration of measure is pivotal in the comparison of the disordered and mean-field model. However, in contrast to these studies, we allow for potentially unbounded random interactions. To handle this, we develop concentration inequalities using Chernoff-type bounds tailored to our setting, inspired by results from \cite{Ko14}. 
Furthermore, the presence of multiple critical temperatures, unlike the single critical temperature of the Curie--Weiss model, necessitates a careful analysis of the free energy landscape and its phase transition structure (we refer to \cite{Le22} for a complete description of the free energy landscape for the CWP model). This in turn means that different temperature regimes are linked to different properties of the critical points of the free energy landscape and therefore need to be treated in different manners. 

Another class of disordered models is the random-field Curie--Weiss--Potts (RFCWP) model, in which the magnetic field is given by i.i.d. random variables. To our knowledge, the metastability of the RFCWP model has only been studied in \cite{Sl12}. The techniques developed in this paper are also applicable to the study of the metastable behaviour of the disordered Curie--Weiss--Potts model with an additional random field, under a suitable assumption on the structure of the free-energy landscape of the underlying RFCWP model.

\subsection{The model}\label{subsection:DCWP_def}
The \emph{disordered Curie--Weiss--Potts (DCWP) model} is a generalisation of the disordered mean-field Ising model to $q$ components. For any $N \in \mathbb{N}$, consider an enumeration of the vertex set consisting of $N$ elements. To each vertex $i \in \{1, \ldots, N\}$ we associate a spin variable $\sigma_{i}$ taking values in $\{1, \ldots, q\}$, $q \geq 2$, the so-called set of colours. We write $\mathcal{S}_{N} = \{1, \ldots, q\}^{N}$ to denote the corresponding state space. Elements of $\mathcal{S}_{N}$ are denoted by Greek letters $\sigma, \eta$, and will be called \emph{configurations}. 

Let $(\Omega, \mathcal{F}, \prob)$ be an abstract probability space and let $\mean$ and $\var$ denote expectation and variance with respect to $\prob$. Let $J \equiv (J_{ij})_{1 \leq i < j \leq \infty}$ be a triangular array of real random variables on $(\Omega, \mathcal{F}, \prob)$ whose law satisfies the following assumption.
\begin{assumption}
  \label{ass:law:J}
  For some $v \in (0, \infty)$ assume that the triangular array $(J_{ij})_{1 \leq i < j \leq \infty}$ consists of i.i.d.\ random variables with
  \begin{enumerate}[(i)]
  \item
    $\mean\bigl[J_{12}\bigr] = 1$ and $\var[J_{12}]=v$,

  \item
    the set $\mathscr{D} = \left\{\lambda \in \mathbb{R} \colon \mean\bigl[\exp(\lambda J_{12})\bigr] < \infty \right\}$ has non-empty interior containing~$0$.
  \end{enumerate}
\end{assumption}
Given a realisation of $J$ and $N \in \mathbb{N} \setminus \{1\}$, we consider the following \emph{random} Hamiltonian, $H_{N}\colon \mathcal{S}_{N} \to \mathbb{R}$, given by
\begin{align}
  \label{eq:H_Jij_form}
  H_{N}(\sigma)
  \;\ldef\;
  -\frac{1}{N} \sum_{1 \leq i < j \leq N} J_{ij}\,
  \indicator_{\{\sigma_{i}=\sigma_{j}\}}.
\end{align}
The corresponding random Gibbs measure, $\mu_{N}$, at inverse temperature $\beta \geq 0$ is defined by
\begin{align}
  \mu_{N}(\sigma)
  \;\equiv\;
  \mu_{N, \beta}(\sigma)
  \;\ldef\;
  \frac {\me^{-\beta H_N(\sigma)}}{Z_{N}},
\end{align}
where $Z_{N} \equiv Z_{N, \beta}$ denotes the partition function. In view of Assumption~\ref{ass:law:J}-(ii), the expected value of the partition function is finite, for all values of $\beta$ and provided that $N$ is chosen large enough. Notice that for $q=2$, the model becomes the disordered Curie--Weiss model. 

The spin configuration evolves as a discrete-time Markov chain $\Sigma^{N} \equiv ( \Sigma^{N}(t) )_{t \geq 0}$, with state space $\mathcal{S}_{N}$ and Glauber-Metropolis transition probabilities given for any $\sigma, \eta \in \mathcal{S}_{N}$ by
\begin{align}\label{eq:rates_def}
  \pi_{N}(\sigma, \eta)
  \;\equiv\;
  \pi_{N, \beta}(\sigma, \eta)
  \;=\;
  \begin{cases}
    \,(N q)^{-1} \me^{-\beta\left[H_N(\eta) - H_N(\sigma)\right]_+},
    &\text{if } d_{H}(\sigma, \eta) = 1,
    \\
    1-\sum_{\eta\neq \sigma} \pi_N(\sigma,\eta)
    &\text{when } \sigma=\eta,
    \\
    0
    &\text{otherwise},
  \end{cases}
\end{align}
where $[a]_{+} \ldef \max\{0, a\}$ and $d_{H}(\sigma, \eta)$ denotes the Hamming distance between configurations $\sigma$ and $\eta$. To lighten notation we will also write $\sigma \sim \eta$, if $d_{H}(\sigma, \eta) = 1$. The Markov chain, $\Sigma^{N}$, defined by $\pi_{N}$ is irreducible and reversible with respect to the Gibbs measure $\mu_{N}$.  The associated (discrete) generator $\mathcal{L}_{N}$ acts on bounded functions $f\colon \mathcal{S}_{N} \to \mathbb{R}$ as
\begin{align}\label{eq:generator_def}
  \bigl(\mathcal{L}_N f\bigr)(\sigma)
  \;\coloneqq\;
  \sum_{\eta \in \mathcal{S}_N} \pi_{N}(\sigma, \eta) \bigl( f(\eta) - f(\sigma) \bigr).
\end{align}
For any $N \in \mathbb{N}$, we write $\Prob_{\!\nu}^{N}$ to denote the law of $\Sigma^{N}$ starting from an initial distribution $\nu$ in $\mathcal{S}_{N}$, and $\Mean_{\nu}^{N}$ to denote the corresponding expectation. Furthermore, for $\mathcal{A} \subset \mathcal{S}_N$, we define the \emph{first return time} $\tau_{\mathcal{A}}^{N}$ to be the following
\begin{align}\label{def:hitting_time}
  \tau_{\mathcal{A}}^{N}
  \;\equiv\;
  \tau_{\mathcal{A}}^{N}\bigl( \Sigma^{N} \bigr)
  \;\coloneqq\;
  \inf\bigl\{
    t > 0 \colon \Sigma^{N}(t) \in \mathcal{A}
  \bigr\}.
\end{align}
The goal of the present paper is to compare the metastable behaviour of the DCWP model with that of the standard mean-field Curie--Weiss--Potts (CWP) model. The latter is the model with Hamiltonian
\begin{align}
  \widetilde{H}_{N}(\sigma)
  \;\coloneqq\;
  -\frac{1}{N} \sum_{1 \leq i < j \leq N} \indicator_{\{ \sigma_{i} = \sigma_{j} \}}
  \;=\;
  \mean\bigl[ H_{N}(\sigma) \bigr].
\end{align}
Quantities {such as} $\widetilde{Z}_{N}, (\widetilde{\Sigma}^{N}(t))_{t \geq 0}, \widetilde{\pi}_{N}, \widetilde{\mathcal{L}}_{N}$ and any other one with the $\sim$ superscript are defined analogously, taking $\widetilde{H}_N$ instead of $H_N$.  With an abuse of terminology and in accordance with the literature (see e.g.~\cite{BMP21} and \cite{BdHMPS24}), we sometimes refer to the models defined in terms of $H_{N}$ and $\widetilde{H}_{N}$ as the \emph{quenched} and the \emph{annealed} model, respectively.

A particular feature of the CWP model is that its Hamiltonian can be expressed in terms of the \emph{empirical measure}, $L_{N}$, encoding the relative frequencies of the different colours.  For this purpose, define $L_{N}\colon \mathcal{S}_{N} \to \mathcal{P}_{N}$ by
\begin{align}\label{eq:def_rho}
  \sigma
  \longmapsto
  \bigl( L_{N}(\sigma)[\{1\}], \dots, L_{N}(\sigma)[\{q\}] \bigr)
  \quad \text{with} \quad
  L_{N}(\sigma)[\{k\}]
  \;\ldef\;
  \frac{1}{N} \sum_{i=1}^{N} \indicator_{\{ \sigma_{i}=k \}},
\end{align}
where $\mathcal{P} \ldef \{x \in [0, 1]^{q} : \sum_{k=1}^{q} x_{k} = 1\}$ and $\mathcal{P}_{N} \ldef \frac{1}{N} \mathbb{N}_{0}^{q} \cap \mathcal{P}$. Then, the Hamiltonian of the CWP model can be rewritten as
\begin{align}\label{eq:H_meso_form}
  \widetilde{H}_{N}(\sigma) \;=\; -\frac{N}{2} \norm{L_{N}(\sigma)}_{2}^{2} + \frac{1}{2}.
\end{align}
Since the transition probabilities $\widetilde{\pi}_{N}$ depend only on the energy difference of two adjacent configurations, we have that the process is lumpable, that is, $\bigl(L_{N}(\widetilde{\Sigma}^{N}(t))\bigr)_{t\geq 0}$ is also a reversible, discrete-time Markov process, see e.g.~\cite[Proposition~2.1]{Le22}.

Our choice of the quenched model is very general and we now illustrate it with three specific examples. The first two pertain to the Potts model on two different types of random graphs, while the third example outlines a Potts model incorporating Gaussian noise.
\begin{example}[Potts model on the Erd\H{o}s--R{\'e}nyi random graph]
  \label{sec:examples}
  By choosing $J_{12}$ distri\-bu\-ted as $p^{-1}\operatorname{Ber}(p)$ with $p \in (0, 1]$, $H_{N}$ in \eqref{eq:H_Jij_form} becomes the Hamiltonian of the CWP model on the Erd\H{o}s--R{\'e}nyi random graph in which edges are present with probability $p$.
\end{example} 
\begin{example}[Potts model on the multi-edge random graph]
  Let $K \sim \text{Pois}(p\binom{N}{2})$ and let $(I_k^N)_{k \in \{1, \ldots, K\}}$ be a sequence of i.i.d.\ uniform random variables in $\{\{i, j\} : 1 \leq i < j\leq N\} $. These define the so-called multi-edge random graph with edge set $E$, also known as Norros--Reittu model (see \cite{NR06}). The CWP model on the multi-edge random graph is therefore defined by the Hamiltonian 
  \begin{align}
    H_N(\sigma)
    \;=\;
    -\frac{1}{Np}\sum_{\{i,j\} \in E}\indicator_{\{\sigma_{i} = \sigma_{j}\}},
  \end{align}
  that is, we sum over the edges present in the random graph and set the interaction identically equal to $1$. We obtain the same model by defining the random variable
  \begin{align}
    J_{ij}
    \;\ldef\;
    \frac{1}{p}\sum_{k=1}^{K} \indicator_{\{ I^{N}_{k} = \{i, j\} \}}
  \end{align}
  and replacing it in the Hamiltonian \eqref{eq:H_Jij_form}. This is the same as choosing $J_{12}$ distributed as $p^{-1}\operatorname{Pois}(p)$ with $p \in (0,1]$ in \eqref{eq:H_Jij_form}. Notice that these random variables are not sub-Gaussian.
\end{example}
\begin{example}[Potts model with Gaussian noise]
  By letting $J_{ij} \sim \mathcal{N}(1, v)$ results in an only \emph{partially} ferromagnetic model, as the random variables are allowed negative values. However, our results show that, for fixed $v$ and for $N$ going to infinity, it behaves as the ferromagnetic mean-field model. In addition, the form of the cumulant generating function simplifies the expression of some results, for instance dropping the error term from lemma $\ref{lemma:xi_comp}$ and it's consequences.
\end{example}

\subsection{Main results}\label{subsec:main_results}
Our main objective is to compare the metastable transition times of the CWP model with the ones of the DCWP model. For this purpose, let us first recall the definition of metastable Markov chains and metastable sets following \cite[Definition~1.1]{SS19}.
\begin{definition}[$\rho_{N}$-Metastability]\label{def:rhometa}
  For $\rho_{N} > 0$ and $K \in \mathbb{N}$, let $\bigl\{\mathcal{M}_{1, N}, \dots, \mathcal{M}_{K, N}\bigr\}$ be a collection of disjoint subsets of $S_{N}$ and set $\mathcal{M}_{N} \ldef \bigcup_{i=1}^{K} \mathcal{M}_{i, N}$. The Markov chain $(\Sigma^{N}(t))_{t \geq 0}$ is $\rho_{N}$-metastable with respect to $\bigl\{\mathcal{M}_{1, N}, \dots, \mathcal{M}_{K, N}\bigr\}$ when
  \begin{align}\label{eq:rhometa}
    K
    \frac{%
      \max_{j \in \{1, \dots, K\}}
      \Prob^{N}_{\mu_{N} \rvert \mathcal{M}_{j, N}}\bigl[
        \tau^{N}_{\mathcal{M}_{N} \setminus \mathcal{M}_{j, N}} < \tau^{N}_{\mathcal{M}_{j, N}}
      \bigr]
    }{%
      \min_{\mathcal{X} \subset \mathcal{S}_N \setminus \mathcal{M}_N}
      \Prob^{N}_{\mu_{N} \rvert \mathcal{X}}\bigl[ \tau^{N}_{\mathcal{M}_N} < \tau^{N}_{\mathcal{X}} \bigr]
    }
    \;\leq\;
    \rho_N
    \;\ll\;
    1,
  \end{align}
  where, for a non-empty set $\mathcal{X} \subset \mathcal{S}_{N}$, $\mu_{N} \rvert \mathcal{X}$ denotes the invariant measure $\mu_{N}$ conditioned on the set $\mathcal{X}$.
\end{definition}
\begin{remark}
  This definition of metastability covers both \emph{metastable transitions} and \emph{tunneling transitions}. In the former, the system evolves towards states of lower energy, whereas in the latter the system moves between states with the same energy. Due to the symmetry of the CWP model, we see both types of states.
\end{remark}
In general, identifying suitable candidates for metastable sets can be a challenging task that is highly dependent on the specific model being considered. However, for mean-field spin systems, it is well established, cf.~\cite{BdH15,OV05}, that metastable sets correspond to the local minima of the free energy landscape. Specifically, in the context of the Curie--Weiss--Potts model, it is known (see, for example, \cite{EW90}) that for any $\beta \in (0, \infty)$, the limiting free energy $\widetilde{F}_{\beta, q}\colon \mathcal{P} \to \mathbb{R}$ is given by
\begin{align}
  \lim_{N \to \infty} -\frac{1}{\beta N} \ln \widetilde{Z}_{N}
  \;=\;
  \inf_{\boldsymbol{x} \in \mathcal{P}} \widetilde{F}_{\beta, q}(\boldsymbol{x}),
\end{align}
where
\begin{align}\label{eq:freeen2}
  \widetilde{F}_{\beta, q}(\boldsymbol{x})
  \;=\;
  -\frac{1}{2}\, \norm{\boldsymbol{x}}_{2}^{2}
  + \frac{1}{\beta}\sum_{i=1}^{q} \boldsymbol{x}_{i} \ln \boldsymbol{x}_{i}.
\end{align}
While the phase diagram of the Curie--Weiss--Potts model -- specifically, the dependence of the \emph{global} minima of $\widetilde{F}_{\beta, q}$ on $\beta$ -- is well-established and thoroughly described in \cite{Wu82, EW90, CET05}, a comprehensive characterisation of the metastable states given by the \emph{local} minima of $\widetilde{F}_{\beta, q}$ \emph{and} the relevant connecting saddle points has recently been studied in \cite{KM20} for $q = 3$ and \cite{Le22} for $q \geq 3$. While the Curie--Weiss model has only one critical value $\beta_{c} = 1$, the CWP model exhibits at least three (critical) temperatures, $0 < \beta_{1}(q) < \beta_{2}(q) < q$ at which the free energy landscape (and therefore the metastable behaviour of  the model) change drastically depending on the chosen temperature regime\footnote{Note that in \cite{Le22}, four relevant temperatures $\beta_1<\beta_2<\beta_3\leq\beta_4$ are introduced and discussed, where $\beta_1=\beta_1(q),\beta_2=\beta_2(q)$ and $\beta_4=q$. The additional temperature $\beta_3\in(\beta_2,\beta_4]$ appearing only for $q\geq 5$ describes the relevant saddle point for metastable transition between local minima, see also Section \ref{app}.}. The local minima of $\widetilde{F}_{\beta, q}$ can be characterised as follows:  Set $\boldsymbol{m}_{0} \equiv \boldsymbol{m}_{0}(q) \ldef (1/q, \ldots, 1/q) \in \mathcal{P}$ and, for any $i \in \{1, \ldots, q\}$, $\boldsymbol{m}_{i} \equiv \boldsymbol{m}_{i}(\beta, q)  = (\boldsymbol{m}_{i, 1}, \ldots, \boldsymbol{m}_{i, q}) \in \mathcal{P}$, where
\begin{align*}
  \boldsymbol{m}_{i, k}
  \;\ldef\;
  \begin{cases}
    (1-s)/q, &k \neq i
    \\[.5ex]
    (1+(q-1)s)/q, &k = i
  \end{cases},
\end{align*}
with $s$ being the largest solution of the equation $\ln(1+(q-1)s) - \ln(1-s) = \beta s$. For $\beta \leq \beta_{1}(q)$, $\boldsymbol{m}_{0}$ is the unique global minimum. For $\beta_1(q)<\beta\leq \beta_2(q)$ $\boldsymbol{m}_{0}$ is a global minimum and $\{\boldsymbol{m}_{1}, \ldots, \boldsymbol{m}_{q}\}$ are local minima. For $\beta_2(q)\leq\beta< q$, $\boldsymbol{m}_{0}$ is a local minimum and $\{\boldsymbol{m}_{1}, \ldots, \boldsymbol{m}_{q}\}$ are global minima. Finally, for $\beta\geq q$, $\{\boldsymbol{m}_{1}, \ldots, \boldsymbol{m}_{q}\}$  are the global minima of $\widetilde{F}_{\beta, q}$. This is summarized in the following table. For a graphical illustration, see Figure~\ref{fig:free:energy}.
\begin{center}
  \begin{tabular}{c||c c c c c} 
    \hline
    $\beta\in$ & $(0,\beta_1(q)]$ &$(\beta_{1}(q), \beta_{2}(q))$& $\{\beta_2(q)\}$ & $(\beta_{2}(q), q)$ &$[q,\infty)$  \\ 
    $\boldsymbol{m}_0$ & global min. & global min. & global min. & local min. & - \\ 
    $\boldsymbol{m}_i$ & - & local min. & global min. & global min. & global min.\\
    \hline
  \end{tabular}
\end{center}
%
\begin{figure}[ht]
  \centering
  \begin{subfigure}{0.35\textwidth}
    \centering
    \includegraphics[width=\textwidth,height=2.5cm,keepaspectratio]{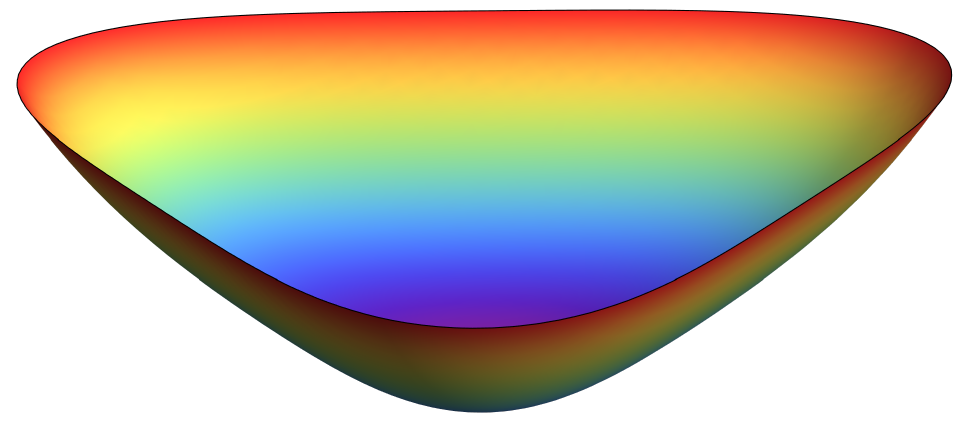}
    \caption{$\beta \leq \beta_{1}(q)$.}
  \end{subfigure}
  \begin{subfigure}{0.35\textwidth}
    \centering
    \includegraphics[width=\textwidth,height=2.5cm,keepaspectratio]{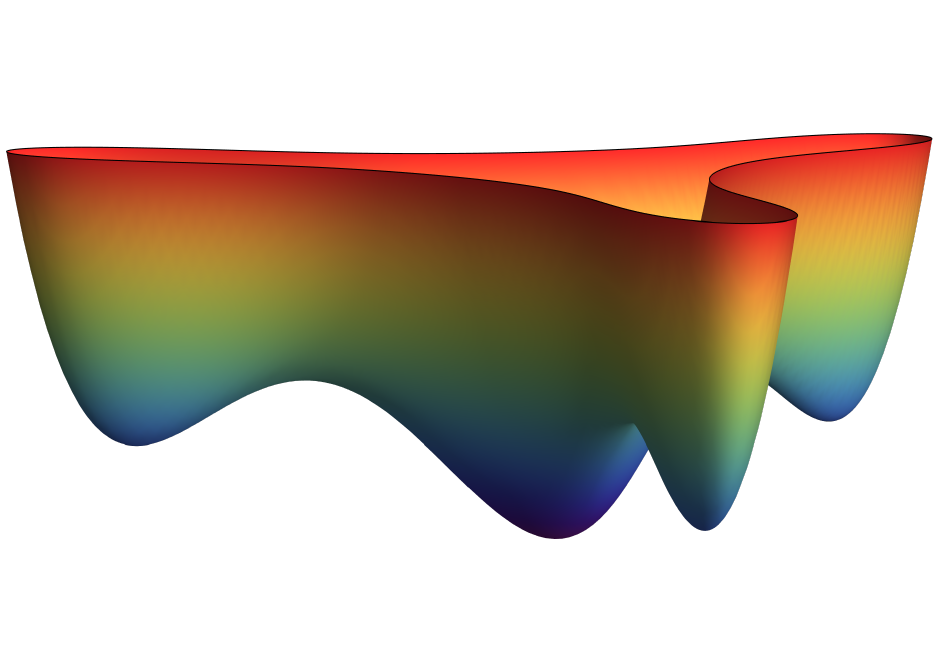}
    \caption{$\beta_{1}(q) < \beta < \beta_{2}(q)$.}
  \end{subfigure}
  \begin{subfigure}{0.35\textwidth}
    \centering
    \includegraphics[width=\textwidth,height=2.5cm,keepaspectratio]{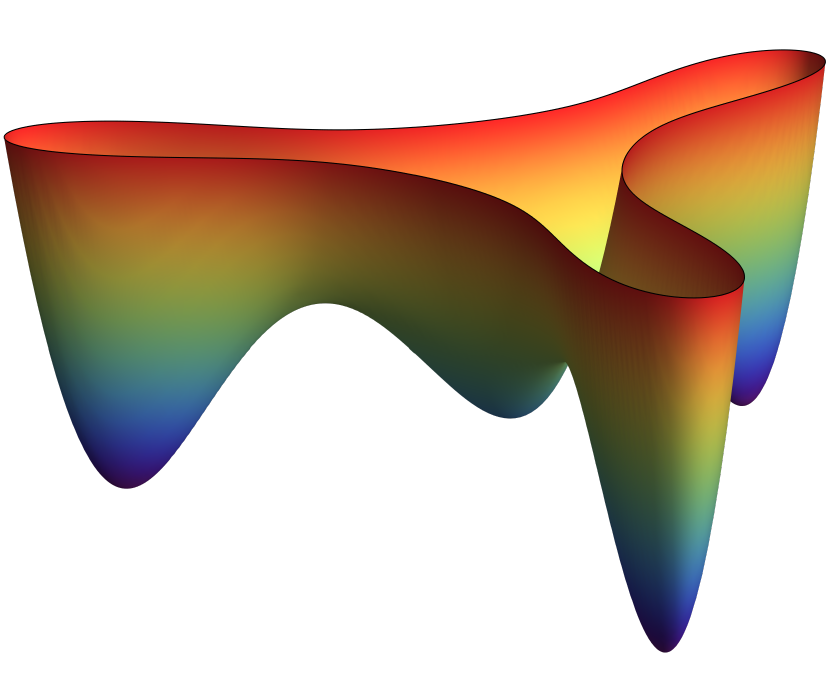}
    \caption{$\beta_{2}(q) < \beta < q$.}
  \end{subfigure}
  \begin{subfigure}{0.35\textwidth}
    \centering
    \includegraphics[width=\textwidth,height=2.5cm,keepaspectratio]{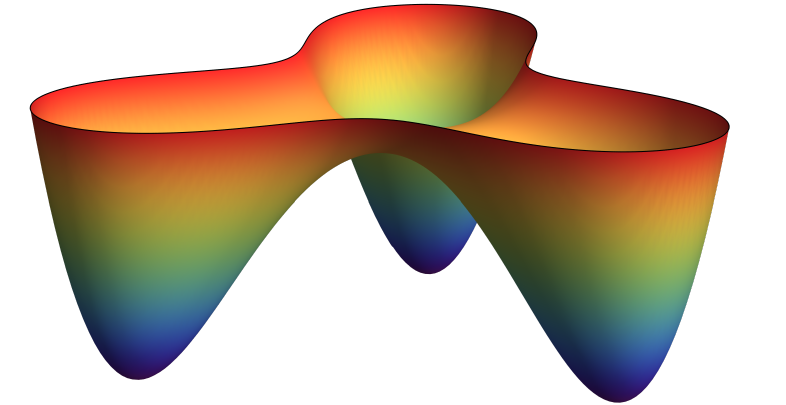}
    \caption{$\beta \geq q$.}
  \end{subfigure}
  \caption{%
    Illustrations of the graph of $\widetilde{F}_{\beta, q}$ for $q = 3$ and different values of $\beta$.
  }
  \label{fig:free:energy}
\end{figure}

For any $N \in \mathbb{N}$ and $i \in \{0, 1, \ldots, q\}$, let $\boldsymbol{m}_{i, N} \in \mathcal{P}_{N}$ be a closest lattice point approximations of $\boldsymbol{m}_{i}$, respectively, and set
\begin{align}
  \mathcal{I}_{\beta}
  \;=\;
  \begin{cases}
    \{0, \ldots, q\}, \quad \text{if } \beta_{1}(q) < \beta < q,
    \\ 
    \{1, \ldots, q\}, \quad \text{if } \beta \geq q.
  \end{cases}
\end{align}
Further, we define for any $\beta \in (\beta_{1}(q), \infty)$ and $i \in \mathcal{I}_{\beta}$ the sets $\mathcal{M}_{i, N} \subset \mathcal{S}_{N}$ as the (set-valued) pre-image of the empirical measure, $L_{N}$, of the points $\boldsymbol{m}_{i, N}$, that is,
\begin{align}\label{eq:metaset}
  \mathcal{M}_{i, N}
  \;\ldef\;
  L^{-1}_{N}(\boldsymbol{m}_{i, N}),
  \qquad i \in \mathcal{I}_{\beta}.
\end{align}
%
%
Our first result says that the Curie--Weiss--Potts model exhibits metastable behaviour in the sense of Definition~\ref{def:rhometa}.
\begin{theorem}[Metastability of the CWP model] \label{thm:meta:cwp}
  For every $\beta > \beta_{1}(q)$ there exists $k_{1} \equiv k_{1}(\beta, q)$ such that, for all $k < k_{1}$, there exists $N_{0} \in \mathbb{N}$ with the property that for every $N > N_0$ the Markov chain $(\widetilde{\Sigma}^{N}(t))_{t \geq 0}$ is $\me^{-k N}$-metastable with respect to the metastable sets $\{\mathcal{M}_{i, N} : i \in \mathcal{I}_{\beta}\}$ defined in \eqref{eq:metaset}.
\end{theorem}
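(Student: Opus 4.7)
The plan is to verify condition \eqref{eq:rhometa} directly via a potential-theoretic analysis of the chain lumped to its empirical measure. Since $\widetilde{H}_{N}(\sigma)$ depends on $\sigma$ only through $L_{N}(\sigma)$ by \eqref{eq:H_meso_form}, the projection $(L_{N}(\widetilde{\Sigma}^{N}_{t}))_{t \geq 0}$ is a reversible Markov chain on the polynomial-sized state space $\cP_{N}$, and all hitting-time quantities appearing in \eqref{eq:rhometa} (which only depend on the empirical measure, since each $\cM_{j,N}=L_{N}^{-1}(\boldsymbol{m}_{j,N})$ is a lumpable fibre) may be computed in the lumped chain. By Stirling, its invariant measure satisfies $\widetilde{\mu}^{L}_{N}(\boldsymbol{x}) = \widetilde{Z}_{N}^{-1}\exp(-\beta N \widetilde{F}_{\beta,q}(\boldsymbol{x}) + O(\log N))$, reducing the problem to a saddle-height estimate for the explicit free energy \eqref{eq:freeen2}. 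The crucial input from \cite{Le22} is the complete catalogue of critical points of $\widetilde{F}_{\beta,q}$ for $\beta>\beta_{1}(q)$: the local minima are exactly $\{\boldsymbol{m}_{i}\}_{i\in \cI_{\beta}}$, and the connecting saddles have free energies strictly exceeding those of the minima they separate.

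For the numerator, reversibility gives for each $j \in \cI_{\beta}$
\begin{align*}
\Prob^{N}_{\widetilde{\mu}_{N}\rvert\cM_{j,N}}\bigl[\tau^{N}_{\cM_{N}\setminus \cM_{j,N}} < \tau^{N}_{\cM_{j,N}}\bigr]
\;=\;
\frac{\widetilde{\mathrm{cap}}(\cM_{j,N},\cM_{N}\setminus \cM_{j,N})}{\widetilde{\mu}_{N}(\cM_{j,N})},
\end{align*}
and I would bound the capacity from above via the Dirichlet principle, applied to a test function equal to $1$ on $\cM_{j,N}$, $0$ on $\cM_{N}\setminus\cM_{j,N}$, and linearly interpolating along a thin tube around the steepest-descent curve of $\widetilde{F}_{\beta,q}$ connecting $\boldsymbol{m}_{j}$ to its nearest competitor through the corresponding lowest saddle $\boldsymbol{z}^{\ast}_{j}$. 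Since the saddle dominates the Dirichlet integral, one obtains $\widetilde{\mathrm{cap}}(\cM_{j,N},\cM_{N}\setminus \cM_{j,N}) \leq \operatorname{poly}(N)\,e^{-\beta N\widetilde{F}_{\beta,q}(\boldsymbol{z}^{\ast}_{j})}$, while Stirling gives $\widetilde{\mu}_{N}(\cM_{j,N}) \geq \operatorname{poly}(N)^{-1}\,e^{-\beta N\widetilde{F}_{\beta,q}(\boldsymbol{m}_{j})}$. Hence the numerator is at most $\operatorname{poly}(N)\,e^{-\beta N \Delta_{j}}$ with $\Delta_{j} = \widetilde{F}_{\beta,q}(\boldsymbol{z}^{\ast}_{j}) - \widetilde{F}_{\beta,q}(\boldsymbol{m}_{j}) > 0$.

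For the denominator I would exhibit, for every $\boldsymbol{x}\in \cP_{N}\setminus \{\boldsymbol{m}_{i,N}\}_{i\in \cI_{\beta}}$, a nearest-neighbour path $\boldsymbol{x}=\boldsymbol{x}_{0},\boldsymbol{x}_{1},\dots,\boldsymbol{x}_{\ell}\in \{\boldsymbol{m}_{i,N}\}_{i\in \cI_{\beta}}$ of length $\ell=O(N)$ along which $\widetilde{F}_{\beta,q}$ is non-increasing, using the critical-point information of \cite{Le22}. Routing a unit flow along this path and applying Thomson's principle in the lumped chain yields $\widetilde{\mathrm{cap}}^{L}(\{\boldsymbol{x}\},L_{N}(\cM_{N}))\geq \widetilde{\mu}^{L}_{N}(\boldsymbol{x})/(CN^{2})$, because every edge along the flow carries a measure-weighted conductance $\widetilde{\mu}^{L}_{N}(\boldsymbol{x}_{k})\widetilde{\pi}^{L}_{N}(\boldsymbol{x}_{k},\boldsymbol{x}_{k+1})$ of order $\widetilde{\mu}^{L}_{N}(\boldsymbol{x})/N$. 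Monotonicity of capacity in its first argument, together with the polynomial size $|\cP_{N}|=O(N^{q-1})$ of the lumped state space, extends this singleton estimate to every $\cX\subset \cS_{N}\setminus\cM_{N}$, so the denominator in \eqref{eq:rhometa} is bounded below by $1/\operatorname{poly}(N)$.

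Combining both bounds and noting that $K=|\cI_{\beta}|$ is a constant, the ratio in \eqref{eq:rhometa} is at most $\operatorname{poly}(N)\exp(-\beta N\min_{j\in \cI_{\beta}}\Delta_{j})$, so any $k_{1}<\beta \min_{j\in\cI_{\beta}}\Delta_{j}$ does the job for $N$ large enough. The main obstacle is constructing the non-increasing descent paths used in the denominator argument: for $q=2$ this is a one-dimensional statement and essentially automatic, but for $q\geq 3$ the free-energy surface is genuinely multidimensional with a ridge-and-basin structure whose careful lattice-level navigation requires the full critical-point analysis of \cite{Le22}, particularly near saddles and ridges where naive gradient descent can increase $\widetilde{F}_{\beta,q}$ after a single spin flip. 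A secondary issue is that the barriers $\Delta_{j}$ degenerate as $\beta$ approaches one of the bifurcation temperatures $\beta_{1}(q),\beta_{2}(q)$, or $q$, but this is absorbed into the $\beta$-dependence of $k_{1}$.
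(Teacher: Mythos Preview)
Your overall architecture matches the paper's: lump to $\cP_{N}$, bound the numerator via Dirichlet and the denominator via a descent-path/flow argument, then divide. Two points deserve comment.

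\textbf{Numerator.} The paper does not build an interpolating tube through the saddle. It simply takes as test function the indicator of the connected component of the sublevel set $\{\widetilde{F}_{\beta,q}<c_{i,j}\}$ containing $\boldsymbol{m}_{j}$. The Dirichlet form then reduces to a boundary sum, and every boundary edge has an endpoint at free energy $\geq c_{i,j}$, giving the exponential bound with no need to locate the saddle precisely or worry about Hessians. Your tube construction would also work, but it is more labour for the same rough bound.

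\textbf{Denominator --- here there is a real gap.} You assert that ``all hitting-time quantities appearing in \eqref{eq:rhometa} \ldots\ may be computed in the lumped chain''. This is false for the denominator: the minimum there is over \emph{all} $\cX\subset\cS_{N}\setminus\cM_{N}$, and a generic $\cX$ is not a union of fibres $L_{N}^{-1}(\boldsymbol{x})$, so the escape probability $\Prob^{N}_{\widetilde{\mu}_{N}\vert\cX}[\tau^{N}_{\cM_{N}}<\tau^{N}_{\cX}]$ is not a lumped quantity. Your later remark that ``monotonicity of capacity \ldots\ extends this singleton estimate to every $\cX$'' does not repair this: monotonicity in the first argument gives $\widetilde{\capacity}_{N}(\cX,\cM_{N})\geq\widetilde{\capacity}_{N}(\cX\cap L_{N}^{-1}(\boldsymbol{x}),\cM_{N})$, but the right-hand side is still not a lumped capacity, and you have not bounded it below by $\widetilde{\mu}_{N}[\cX]/\operatorname{poly}(N)$. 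The paper fills this gap by decomposing $\cX$ fibre by fibre, using that for $\sigma$ in a single fibre $\cX_{i}=L_{N}^{-1}(\boldsymbol{x}_{i})$ the probability $\widetilde{\Prob}^{N}_{\sigma}[\tau^{N}_{\cM_{N}}<\tau^{N}_{\cX_{i}}]$ is constant (by lumpability of $\cX_{i}$ and $\cM_{N}$) and equals the macroscopic ratio $\capacitym_{N}(\boldsymbol{x}_{i},\boldsymbol{m}_{N})/\widetilde{Q}_{N}(\boldsymbol{x}_{i})$; summing over fibres and invoking $|\cP_{N}|\leq N^{q-1}$ then yields $\widetilde{\capacity}_{N}(\cX,\cM_{N})/\widetilde{\mu}_{N}[\cX]\geq |\cP_{N}|^{-1}\min_{\boldsymbol{x}}\capacitym_{N}(\boldsymbol{x},\boldsymbol{m}_{N})/\widetilde{Q}_{N}(\boldsymbol{x})$. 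Once you add this step your argument goes through.

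On your ``main obstacle'': the paper's construction of the descent path is indeed the most delicate point, and is handled in two stages---an explicit combinatorial walk that brings an arbitrary lattice point into the $\epsilon$-interior $\cP^{\epsilon}$ while strictly decreasing $\widetilde{F}_{N}$ (using a one-variable convexity argument for the function $x\mapsto x-\beta^{-1}\ln x$), followed by a lattice approximation of a continuous gradient-descent curve inside $\cP^{\epsilon}$ where uniform smoothness controls the $O(1/N)$ discretisation error.
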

\begin{remark}
  Although the above result is primarily used in the proof of the next theorem, that addresses the metastability of the disordered Curie--Weiss--Potts model, it is a novel development in its own right. The parameter $k_{1}$, whose explicit value is given in the proof of Theorem~\ref{thm:meta:cwp}, encodes the strength of metastability with respect to the chosen sets $\{\mathcal{M}_{i, N} : i \in \mathcal{I}_{\beta}\}$ and, consequently, the separation of time scales induced by the underlying free energy landscape. For a discussion of the optimality of $k_{1}$, see Remark~\ref{rem:optimal:k1:CWP}.
\end{remark}
The next theorem states that the disordered Curie--Weiss--Potts model is also $\rho$-metastable in the sense of Definition~\ref{def:rhometa} with the same metastability parameter and metastable set of the CWP model as described in Theorem~\ref{thm:meta:cwp}.
\begin{theorem}[Metastability of the DCWP model] \label{thm:meta:dcwp}
  For every $\beta > \beta_{1}(q)$ and any $k \in (0, k_{1})$, where $k_1$ is defined as in Theorem \ref{thm:meta:cwp}, the event
  \begin{align}\label{eq:def_Ometa}
    \Omega_{\textnormal{meta}}(N)
    \;\ldef\;
    \Bigl\{%
      ({\Sigma}^{N}(t))_{t \geq 0} \text{ is $\me^{-k N}$-metastable}
      \text{ w.r.t.}\, \{\mathcal{M}_{i,N} : i \in \mathcal{I}_{\beta}\}
    \Bigr\}
  \end{align}
  satisfies
  \begin{align}\label{eq:meta_thm}
    \prob\biggl[\liminf_{N \to \infty} \Omega_{\textnormal{meta}}(N)\biggr]
    \;=\;
    1.
  \end{align}
\end{theorem}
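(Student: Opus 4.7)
The plan is to deduce $\rho$-metastability of the DCWP dynamics from Theorem~\ref{thm:meta:cwp} via a multiplicative comparison between the quenched and annealed Hamiltonians. First, I rewrite the left-hand side of \eqref{eq:rhometa} in terms of Newtonian capacities. Using the identity $\Prob^{N}_{\mu_N \rvert \cA}[\tau_\cB^N < \tau_\cA^N] = \capacity(\cA, \cB)/\mu_N(\cA)$, the metastability ratio becomes
\begin{align*}
  K\,\frac{\max_{j \in \{1,\dots,K\}} \capacity(\cM_{j,N}, \cM_N \setminus \cM_{j,N})/\mu_N(\cM_{j,N})}{\min_{\cX \subset \cS_N \setminus \cM_N} \capacity(\cX, \cM_N)/\mu_N(\cX)},
\end{align*}
and an analogous expression holds for the CWP model with $\widetilde{\mu}_N, \widetilde{\capacity}$. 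Theorem~\ref{thm:meta:cwp} bounds the annealed version by $\me^{-k_1 N}$, so it suffices to show that, on an event of high $\prob$-probability, the quenched and annealed quantities differ by at most $\me^{\pm \beta \delta N}$ for an arbitrarily small $\delta$, and then to choose $\delta < (k_1 - k_2)/(4\beta)$ so that the multiplicative error is absorbed into the difference $k_1 - k_2$.

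The main obstacle—and the step where Assumption~\ref{ass:law:J}(ii) is crucial—is the uniform concentration
\begin{align*}
  \max_{\sigma \in \cS_N} \bigl| H_N(\sigma) - \widetilde{H}_N(\sigma) \bigr|
  \;\leq\;
  \tfrac{\delta}{2}\, N.
\end{align*}
For each fixed $\sigma$, the random variable $N [H_N(\sigma) - \widetilde{H}_N(\sigma)] = -\sum_{i<j}(J_{ij} - 1)\,\indicator_{\{\sigma_i = \sigma_j\}}$ is a centered sum of at most $\binom{N}{2}$ i.i.d.\ summands whose common law has a cumulant generating function finite in a neighborhood of zero. A Chernoff-type bound (in the spirit of the ones the authors plan to develop, following \cite{Ko14}) therefore yields
\begin{align*}
  \prob\Bigl[ \bigl|H_N(\sigma) - \widetilde{H}_N(\sigma)\bigr| > \tfrac{\delta}{2} N \Bigr]
  \;\leq\;
  2 \me^{-c(\delta) N^2}
\end{align*}
for some $c(\delta) > 0$. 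A union bound over the $q^N$ configurations produces a failure probability at most $2 q^N \me^{-c(\delta) N^2}$, which is summable in $N$ and in particular super-exponentially small.

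On the event where the uniform bound holds, the identity $\mu_N(\sigma)\pi_N(\sigma, \eta) = (Nq Z_N)^{-1} \me^{-\beta \max(H_N(\sigma), H_N(\eta))}$ (and its annealed counterpart), together with the corresponding comparison of partition functions, gives
\begin{align*}
  \me^{-\beta \delta N}\, \widetilde{\mu}_N(\sigma) \widetilde{\pi}_N(\sigma, \eta)
  \;\leq\;
  \mu_N(\sigma)\, \pi_N(\sigma, \eta)
  \;\leq\;
  \me^{\beta \delta N}\, \widetilde{\mu}_N(\sigma)\, \widetilde{\pi}_N(\sigma, \eta),
\end{align*}
uniformly over adjacent pairs $\sigma \sim \eta$, and the same two-sided bound for $\mu_N$ vs.\ $\widetilde{\mu}_N$. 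Plugging these estimates into the Dirichlet variational formula for capacities transfers the multiplicative control to $\me^{-\beta \delta N}\widetilde{\capacity}(\cA, \cB) \leq \capacity(\cA, \cB) \leq \me^{\beta \delta N}\widetilde{\capacity}(\cA, \cB)$, uniformly in $\cA, \cB$. Inserting this into the displayed ratio above shows that each capacity-to-mass ratio is perturbed by a factor $\me^{\pm 2 \beta \delta N}$, so the full ratio is at most $\me^{4 \beta \delta N}$ times its annealed counterpart and hence at most $\me^{-(k_1 - 4\beta\delta) N} \leq \me^{-k_2 N}$. This proves $\Omega_{\textnormal{meta}}(N)$ on that event. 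Since the complementary events are $\prob$-summable in $N$, a standard Borel--Cantelli argument yields \eqref{eq:meta_thm}.
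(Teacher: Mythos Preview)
Your argument is correct and follows essentially the same route as the paper: control $\max_{\sigma}|H_N(\sigma)-\widetilde H_N(\sigma)|$ via a Chernoff bound plus a union bound over $\cS_N$, transfer this to a two-sided multiplicative comparison of Dirichlet forms and hence of capacities and escape probabilities, feed in Theorem~\ref{thm:meta:cwp}, and conclude by Borel--Cantelli.

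The only noteworthy difference is the deviation scale. You work on the event $\{\max_\sigma|\Delta_N(\sigma)|\le \tfrac{\delta}{2}N\}$, which gives a per-configuration failure probability $\me^{-c(\delta)N^2}$ and makes the union bound over $q^N$ states trivially summable; the resulting multiplicative error $\me^{\pm 4\beta\delta N}$ is then absorbed by choosing $\delta<(k_1-k_2)/(4\beta)$. The paper instead works on the sharper event $\Xi_N(a)=\{\max_\sigma|\Delta_N(\sigma)|\le a\sqrt{N}\}$, which yields a per-configuration failure probability $\me^{-a^2 N/v(1+o(1))}$ and therefore requires the calibration $a>\sqrt{v\ln q}$ to beat the union bound; the error $\me^{O(\sqrt{N})}$ is then negligible against $\me^{-k_1 N}$ for \emph{any} $k_2<k_1$. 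Your cruder threshold is perfectly adequate for this theorem, but the paper's $\sqrt{N}$-scale event $\Xi_N(a)$ is set up precisely because it is reused verbatim in the harmonic-sum estimates of Section~\ref{sec:final} (Proposition~\ref{prop:harm_estimate} and Lemma~\ref{lemma:partition_weight}), where an $O(N)$ perturbation would be too coarse.
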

\begin{remark}
  Note that, $\prob$-almost surely, the additional disorder induced by the random variables $(J_{ij})_{1 \leq i < j \leq N}$  does not affect the strength of metastability of the Markov chain $\Sigma^{N}$ with respect to $\{\mathcal{M}_{i, N} : i \in \mathcal{I}_{\beta}\}$. Actually, as we argue in Remark~\ref{rem:tightk}, the constant $k_{1}$ is optimal (on exponential scale): for every $k > k_{1}$ there exists a random variable $N_{1}$ such that, $\prob$-almost surely, $N_{1} < \infty$ and for all $N \geq N_{1}$ the Markov chain $\Sigma^{N}$ is not $\me^{-k N}$- metastable.
\end{remark}

In our second set of results, we compare the mean transition times between specific disjoint subsets of the metastable sets of the Markov chain $(\Sigma^{N}(t))_{t \geq 0}$ with those of the corresponding Markov chain $(\widetilde{\Sigma}^{N}(t))_{t \geq 0}$. For this purpose, we distinguish between metastable and tunnelling transitions. In the former, we examine the mean hitting times of the metastable set, $\mathcal{B}_{N}$, associated with the global minima of $\widetilde{F}_{\beta, q}$ (stable states) when the corresponding Markov chain starts in the metastable set, $\mathcal{A}_{N}$, linked to local minima of $\widetilde{F}_{\beta, q}$ (metastable states). In contrast, the latter pertains to transitions between stable states. 
\begin{definition}\label{def:regimes}
  For metastable transitions we consider the following metastable and stable sets:
  \begin{enumerate}
  \item[(i)]
    $\mathcal{A}_{N} = \bigcup_{i=1}^{q} \mathcal{M}_{i, N}$ and $\mathcal{B}_{N} = \mathcal{M}_{0,N}$ if $\beta_{1}(q) < \beta \leq \beta_{2}(q)$,    
  \item[(ii)]
    $\mathcal{A}_{N} = \mathcal{M}_{0, N}$ and $\mathcal{B}_{N} = \bigcup_{i=1}^{q} \mathcal{M}_{i, N}$ if $\beta_{2}(q) < \beta < q$.
  \end{enumerate}
  For tunnelling transitions we consider the following stable sets:
  \begin{enumerate}
  \item[(iii)]
    $\mathcal{A}_{N} = \mathcal{M}_{1, N}$ and $\mathcal{B}_{N} = \bigcup_{i=2}^{q} \mathcal{M}_{i, N}$ if $\beta_{2}(q) < \beta$.
  \end{enumerate}
\end{definition}
\begin{remark}
  Notice that in case (iii) it is possible to define $\mathcal{A}_{N}$ as any of the sets $\mathcal{M}_{i, N}$, $i \in \{1, \dots, q\}$, and $\mathcal{B}_{N}$ as the union of the remaining ones. 
\end{remark} 
Moreover, we define, for non-empty disjoint sets $\mathcal{A}, \mathcal{B} \subset \mathcal{S}_{N}$, the so-called \emph{last-exit biased distribution} on $\mathcal{A}$ for the transition from $\mathcal{A}$ to $\mathcal{B}$ by
\begin{align}
  \label{eq:def_nuAB}
  \nu_{\mathcal{A}, \mathcal{B}}(\sigma)
  \;\equiv\;
  \nu_{\mathcal{A}, \mathcal{B}}^N(\sigma)
  \;=\;
  \frac{\mu_{N}(\sigma) \Prob_{\!\sigma}^{N}\bigl[\tau_{\mathcal{B}}^N < \tau_{\mathcal{A}}^N\bigr]}
  {%
    \sum_{\sigma \in \mathcal{A}} \mu_{N}(\sigma)
    \Prob_{\!\sigma}^{N}\bigl[\tau_{\mathcal{B}}^N < \tau_{\mathcal{A}}^N\bigr]
  },
  \qquad \sigma \in \mathcal{A}.
\end{align}
This distribution, weighting the underlying Gibbs measure $\mu_{N}$ with the probability of a successful escape from $\mathcal{A}$ to $\mathcal{B}$, plays an essential role in the potential-theoretic approach to metastability, as will be explained in Section~\ref{sec:pottheo}.

Our next two results describe the mean hitting time of the stable set $\mathcal{B}_{N}$ when starting the Markov chain, $(\Sigma^{N}(t))_{t \geq 0}$, with initial distribution $\nu_{\mathcal{A}_{N}, \mathcal{B}_{N}}$ and compare it to the corresponding quantity for the Markov chain $(\widetilde{\Sigma}^{N}(t))_{t \geq 0}$. Theorem~\ref{theo:ratio_conc} provides an estimate of the tail behaviour of the ratio of these hitting times, while Theorem~\ref{theo:ratio_mom} provides moment estimates. In particular, the following results show that the mean hitting time of the DCWP model is equal to the one of the CWP model times a random variable. This random variable is the exponential of a sub-Gaussian random variable whose tail distribution is described in Theorem~\ref{theo:ratio_conc} and whose moments are estimated in Theorem~\ref{theo:ratio_mom}.
\begin{theorem}[Tail estimates of the mean hitting time]\label{theo:ratio_conc}
  For any $\beta > \beta_{1}(q)$ let $\mathcal{A}_{N}$ and $\mathcal{B}_{N}$ be chosen as in Definition~\ref{def:regimes}. Then for any $t > 0$,
  \begin{align}
    \liminf_{N \to \infty}
    \prob\Biggl[
      \me^{-t - \beta^{2} v/4}
      \leq
      \frac{%
        \Mean^{N}_{\nu_{\mathcal{A}_{N}, \mathcal{B}_{N}}}\bigl[ \tau^{N}_{\mathcal{B}_N} \bigr]
      }{%
        \widetilde{\Mean}^{N}_{\tilde{\nu}_{\mathcal{A}_{N}, \mathcal{B}_{N}}}\bigl[\tilde{\tau}^N_{\mathcal{B}_N}\bigr]
      }
      \leq
      \me^{t + \beta^{2} v/2}
    \Biggr]
    \;\geq\;
    1 - 4 \me^{-t^{2}/(8 \beta^{2} v)}.
  \end{align}
\end{theorem}
\begin{theorem}[Moment estimates of the mean hitting time]\label{theo:ratio_mom}
  For any $\beta > \beta_{1}(q)$ let $\mathcal{A}_{N}$ and $\mathcal{B}_{N}$ be chosen as in Definition~\ref{def:regimes}. Then for any $k \geq 1$, 
  \begin{align}\label{eq:theo:ratio_mom}
    \me^{-\beta^{2} v/4}
    \;\leq\;
    \liminf_{N \to \infty}
    \frac{
      \mean\Bigl[\Mean^{N}_{\nu_{\mathcal{A}_{N}, \mathcal{B}_{N}}}\bigl[ \tau^{N}_{\mathcal{B}_N}\bigr]^k \Bigr]^{1/k}
    }{
      \widetilde{\Mean}^{N}_{\tilde{\nu}_{\mathcal{A}_{N}, \mathcal{B}_{N}}}\bigl[\tilde{\tau}^N_{\mathcal{B}_N}\bigr]
    }
    \;\leq\;
    \limsup_{N \to \infty}
    \frac{
      \mean\Bigl[\Mean^{N}_{\nu_{\mathcal{A}_{N}, \mathcal{B}_{N}}}\bigl[ \tau^{N}_{\mathcal{B}_N}\bigr]^k \Bigr]^{1/k}
    }{
      \widetilde{\Mean}^{N}_{\tilde{\nu}_{\mathcal{A}_{N}, \mathcal{B}_{N}}}\bigl[\tilde{\tau}^N_{\mathcal{B}_N}\bigr]
    }
    \;\leq\;
    \me^{\beta^{2} v k}.
  \end{align}
\end{theorem}
\begin{remark}
  We emphasize that this framework naturally extends to inhomogeneous settings by allowing random variables to be independent, but not necessarily identically distributed. However, for clarity and readability, we adopt the i.i.d.\ assumption in our analysis.
\end{remark}
Aspects of the metastable behaviour of the CWP model have also been studied in \cite{Le22} and \cite{LS16}, where under a suitable time rescaling, a limiting process has been derived for both reversible and non-reversible Glauber dynamics, in the spirit of the martingale approach to metastability {(see e.g. \cite{BL15})}. Moreover mixing times for Glauber and Swendsen-Wang type dynamics are estimated in \cite{CDLLPS12}, \cite{KL26} and \cite{GSV19}. Moreover, an explicit expression for the mean hitting time of the CWP model has been obtained in \cite[Theorem 4.7]{Le22} and \cite[Theorem 5.5]{Sl12}, as a special case of the random field CWP model.

In the context of disordered spin models, previous works have estimated the meta\-stable transition time for the Curie--Weiss model with either bond or site disorder, as in  \cite{BMP21}, \cite{dHJ21}, \cite{BdHMPS24} and \cite{BdHM22}. The same results have been obtained in \cite{BBI09} for the Curie--Weiss model with random magnetic field. Further results have also been obtained for the CWP with random magnetic field in \cite{Sl12}, where, under some assumptions on the free energy landscape, sharp bounds on the mean hitting times are given.

\subsection{Methods and outline}\label{sec:pottheo}
Our proofs crucially rely on the \emph{potential theoretic} approach to metastability, which links the probabilistic objects describing the metastable behaviour of the system to the solutions of certain boundary value problems. This approach was initiated by the paper \cite{BEGK01} and leads to precise asymptotics of the metastable transition time (for a general overview of this method we refer to \cite{BdH15}). Furthermore, we establish Chernoff-type concentration inequalities for arbitrary Lipschitz functions of the edge weights, which can be achieved under a condition pertaining to the existence of the cumulant generating function. This approach was inspired by \cite{Ko14}, where the concept of subgaussian random variables is generalised to encompass arbitrary metric probability spaces.

\subsubsection{Key notions of potential theory}\label{subsec:pot_theo}
For every $N \in \mathbb{N}$ and non-empty, disjoint subsets $\mathcal{A}, \mathcal{B}\subset \mathcal{S}_N$, the \emph{equilibrium potential} $h^{N}_{\mathcal{A}, \mathcal{B}}: \mathcal{S}_{N} \to [0,1]$ is the unique solution of the boundary value problem
\begin{align}\label{eq:dirichlet_prob}
  \left\{
    \begin{array}{rcll}
      \big(\mathcal{L}_{N} f\big)(\sigma)
      &\mspace{-5mu}=\mspace{-5mu}& 0,
      &\sigma \in \mathcal{S}_{N} \setminus (\mathcal{A} \cup \mathcal{B}),
      \\[.5ex] 
      f(\sigma)
      &\mspace{-5mu}=\mspace{-5mu}& \mathbbm{1}_{\mathcal{A}}(\sigma), \quad 
      &\sigma \in \mathcal{A} \cup \mathcal{B}.
    \end{array}
  \right.
\end{align}
Notice that $h^{N}_{\mathcal{A}, \mathcal{B}}$ has the following probabilistic interpretation: for any $\sigma \in \mathcal{S}_{N} \setminus (\mathcal{A} \cup \mathcal{B})$, we have $h^{N}_{\mathcal{A}, \mathcal{B}}(\sigma) = \Prob^{N}_{\!\sigma}\bigl[\tau^{N}_{\mathcal{A}} < \tau^{N}_{\mathcal{B}} \bigr]$. Another pivotal quantity in potential theory is the \emph{capacity} of the pair $(\mathcal{A}, \mathcal{B})$ that is defined by
\begin{align}\label{eq:cap_def}
  \capacity_{N}(\mathcal{A}, \mathcal{B})
  \;\ldef\;
  \sum_{\sigma \in \mathcal{A}} \mu_{N}(\sigma)\, \Prob^{N}_{\!\sigma}\bigl[\tau^{N}_{\mathcal{B}} < \tau^{N}_{\mathcal{A}} \bigr] 
  \;=\;
  \sum_{\sigma \in \mathcal{A}} \mu_{N}(\sigma)\bigl(-\mathcal{L}_{N} h^{N}_{\mathcal{A}, \mathcal{B}}\bigr)(\sigma).
\end{align}
Recalling that we write $\mu_{N} \rvert \mathcal{A}$ to denote the Gibbs measure $\mu_{N}$ conditioned on the set $\mathcal{A}$, we clearly have that
\begin{align}\label{eq:escape_prob}
  \Prob^{N}_{\mu_{N} \rvert \mathcal{A}}\bigl[\tau^{N}_{\mathcal{B}} < \tau^{N}_{\mathcal{A}} \bigr]
  \;=\;
  \frac{\capacity_{N}(\mathcal{A}, \mathcal{B})}{\mu_{N}[\mathcal{A}]}.
\end{align}
Furthermore, since $h^{N}_{\mathcal{A}, \mathcal{B}}(\sigma) + h^{N}_{\mathcal{B}, \mathcal{A}}(\sigma) = 1$, for any $\sigma \in \mathcal{S}$, and $\mathcal{L}_{N}$ applied to a constant function vanishes, the definition of metastability also implies $\capacity_{N}(\mathcal{A}, \mathcal{B}) = \capacity_{N}(\mathcal{B}, \mathcal{A})$. Moreover, for arbitrary sets $\mathcal{A}, \mathcal{B}, \mathcal{C} \subset \mathcal{S}_{N}$ with $\mathcal{A} \subset \mathcal{B}$ and $\mathcal{B} \cap \mathcal{C} = \emptyset$,
\begin{align}\label{eq:monotonic}
  \capacity_{N}(\mathcal{C},\mathcal{A})
  &\;=\;
  \sum_{\sigma \in \mathcal{C}} \mu_{N}(\sigma)\, \Prob^{N}_{\!\sigma}\bigl[ \tau^{N}_{\mathcal{A}} < \tau^{N}_{\mathcal{C}} \bigr]
  \nonumber\\[.5ex]
  &\;\leq\;
  \sum_{\sigma \in \mathcal{C}} \mu_{N}(\sigma)\, \Prob^{N}_{\!\sigma}\bigl[ \tau^{N}_{\mathcal{B}} < \tau^{N}_{\mathcal{C}}\bigr]
  \;=\;
  \capacity_{N}(\mathcal{C}, \mathcal{B}).
\end{align}
By using potential theory, see e.g.\ \cite[Corollary 7.11]{BdH15}, one obtains an expression for the mean hitting time between two sets, albeit with respect to the last-exit biased distribution defined in \eqref{eq:def_nuAB}
\begin{align}\label{eq:golden_formula}
  \Mean^{N}_{\nu_{\mathcal{A},\mathcal{B}}}\bigl[ \tau^{N}_{\mathcal{B}} \bigr]
  \;=\;
  \frac{1}{\capacity_{N}(\mathcal{A}, \mathcal{B})}\,
  \sum_{\sigma \in \mathcal{S}_{N}} \mu_{N}(\sigma)\, h^{N}_{\mathcal{A}, \mathcal{B}}(\sigma)
  \;=\;
  \frac{\|h^{N}_{\mathcal{A}, \mathcal{B}}\|_{\mu_{N}}}{\capacity_{N}(\mathcal{A}, \mathcal{B})},
\end{align}
where $\norm{\cdot}_{\mu_N}$ denotes the $\ell_{1}(\mu_{N})$-norm. The presence of the last-exit biased distribution in the formula is an artefact of the method. However, if the starting set is atomic it reduces to a Dirac measure. From Equations~\eqref{eq:escape_prob} and \eqref{eq:golden_formula} we see that, via the potential-theoretic approach, the definitions and quantities relevant to metastability admit both probabilistic and analytical representations. One difficulty in this route is estimating the $\ell_{1}$-norm of the harmonic function $h_{\mathcal{A},\mathcal{B}}^{N}$, for which we rely on results from \cite{SS19}. In order to estimate capacities effectively, we employ two dual variational principles. The \emph{Dirichlet principle} states that
\begin{align}
  \label{eq:Dirichlet}
  \capacity_{N}(\mathcal{A}, \mathcal{B})
  \;=\;
  \inf \bigl\{\mathcal{E}_{N}(f) \,:\, f \in \mathcal{H}^{N}_{\mathcal{A}, \mathcal{B}} \bigr\},
\end{align}
where $\mathcal{H}^{N}_{\mathcal{A}, \mathcal{B}} \ldef \bigl\{h\colon \mathcal{S}_{N} \to \mathbb{R} : 0 \leq h \leq 1,\, h\rvert_{\mathcal{A}} = 1,\, h\rvert_{\mathcal{B}} = 0 \bigr\}$
%
%
and
\begin{align}\label{def:D_form_E}
  \mathcal{E}_{N}(f)
  \;\ldef\;
  \frac{1}{2}\sum_{\sigma, \eta \in \mathcal{S}_{N}}\mu_{N}(\sigma)\, \pi_{N}(\sigma, \eta)\, \bigl( f(\sigma) - f(\eta) \bigr)^{2}
\end{align}
is the Dirichlet form. Analogously, the \emph{Thomson principle} states that
\begin{align}
  \label{eq:Thomson}
  \capacity_{N}(\mathcal{A}, \mathcal{B})
  \;=\;
  \sup\biggl\{\frac{1}{\mathcal{D}_{N}(\varphi)} : \varphi \in \mathcal{U}^{N}_{\mathcal{A}, \mathcal{B}} \biggr\}
  \;=\;
  \Bigl(
    \inf\bigl\{\mathcal{D}_{N}(\varphi) : \varphi \in \mathcal{U}^{N}_{\mathcal{A}, \mathcal{B}} \bigr\}
  \Bigr)^{-1},
\end{align}
where $\mathcal{U}^{N}_{\mathcal{A}, \mathcal{B}}$ denotes the space of all unit anti-symmetric $\mathcal{A},\mathcal{B}$-flows, while
\begin{align}\label{def:D_form_D}
  \mathcal{D}_{N}(\varphi)
  \;\ldef\;
  \frac{1}{2}\sum_{\sigma, \eta \in \mathcal{S}_{N}} \frac{1}{\mu_{N}(\sigma)\pi_{N}(\sigma, \eta)} \varphi(\sigma, \eta)^2.
\end{align}
These dual variational principles yield matching upper and lower bounds on the capacity when suitable test functions and flows are chosen. A common corollary of these principles is the Rayleigh shortcut method (see, e.g., \cite{DS84}). Let $\gamma\colon \{0, \dots, K\} \to \mathcal{S}_{N}$ be a cycle free path from $\mathcal{A}$ to $\mathcal{B}$ with $\gamma_{0} \in \mathcal{A}$, $\gamma_{K} \in \mathcal{B}$ and $\pi_{N}(\gamma_{n}, \gamma_{n+1}) > 0$ for all $n \in \{0, \dots, K-1\}$. From either variational principle we obtain
\begin{align}\label{eq:rayleigh_def}
  \capacity_{N}(\mathcal{A}, \mathcal{B})
  \;\geq\;
  \Biggl(
    \sum_{n=0}^{K-1}\frac{1}{\mu_{N}(\gamma_n)\pi_{N}(\gamma_n, \gamma_{n+1})}
  \Biggr)^{\!\!-1}
  \rdef\;
  \capacity_{\gamma, N}(\mathcal{A}, \mathcal{B})
\end{align}
We will denote by $\mathcal{E}_{N},\mathcal{D}_{N}, \widetilde{\mathcal{E}}_{N}$ and $\widetilde{\mathcal{D}}_{N}$ the forms defined for the specific cases of the quenched and annealed models respectively.
Sections~\ref{section:cap_mu} and \ref{sec:final} are devoted to controlling the denominator (the capacity) and the numerator (the harmonic sum), respectively.

\subsubsection*{Outline}
The paper is structured as follows: Section~\ref{app} focuses on proving Theorem~\ref{thm:meta:cwp}. It begins by describing the free energy of the CWP model and identifying its relevant critical points, as obtained from \cite{Le22}. Section~\ref{sec:meta-DCWP} introduces preliminary concentration inequalities for the comparison of both models and concludes with the proof of Theorem~\ref{thm:meta:dcwp}, as detailed in Section~\ref{sec:proofmeta_dcwp}. Section~\ref{section:cap_mu} provides annealed estimates and concentration inequalities for the capacities of the DCWP model. Section~\ref{app:concent} provides a derivation of the concentration inequalities used throughout the work. Section~\ref{sec:final} starts with estimates for the harmonic sum, both annealed and concentration estimates, which lead to the proof of our main Theorems~\ref{theo:ratio_conc} and~\ref{theo:ratio_mom}. 

\section{Metastability for the CWP model} \label{app}
In this section we study the metastability of the CWP model. We begin by characterising the critical points of the free energy in Proposition~\ref{prop:eland}, following mainly \cite{Le22}. We then introduce the lumped model, i.e.\ the model described by the mesoscopic order parameter representing the array of colours/spins frequencies. In Section~\ref{app:proofteo} we prove Proposition~\ref{prop:preem_cap_estimates}, which allows us to obtain rough estimates for the capacities of the annealed model. The same section contains the proof of Theorem~\ref{thm:meta:cwp}, which states that the CWP model is $\rho$-metastable.
%
%
%
%

%
%
%
%
We will now elaborate on the description of the free energy landscape of the CWP model, started in Section~\ref{subsec:main_results} and thoroughly explained in \cite[Section 3]{Le22}. In there, properties of critical points of the free energy landscape are described in terms of the relevant temperatures $0 < \beta_{1} < \beta_{2} < \beta_{3} \leq \beta_{4} = q$, where we simplify notation by omitting the $q$-dependence of the temperatures. The point $\boldsymbol{m}_{0} \ldef (1/q, \dots, 1/q)$ changes, as $\beta$ increases, from a global minimum to a local maximum of the free energy $\widetilde{F}_{\beta, q}$ defined in \eqref{eq:freeen2}. The points $\boldsymbol{m}_{1}, \dots, \boldsymbol{m}_{q}$ are the other local minima of $\widetilde{F}_{\beta, q}$, and the points $\boldsymbol{z}_{j,k}, j \neq k \in \{0, \dots, q \}$, are index-$1$ saddle points. All these properties are summarised in Proposition~\ref{prop:eland}. For its proof we refer to \cite{Le22}, where the points $\boldsymbol{m}_{i}$, $\boldsymbol{z}_{0, i}$, $\boldsymbol{z}_{j,k}$ are the solutions of Equations \cite[(3.2)--(3.4)]{Le22} denoted there by $\boldsymbol{u}_1^k$, $\boldsymbol{v}_{1}^{k}$ and $\boldsymbol{u}_{1}^{k,l}$ respectively. 

In the following proposition we will describe the energy landscape in terms of communication height and essential gates, as defined in \cite[Definition~10.2]{BdH15}. Let $\mathcal{G}_{j, k} \equiv \mathcal{G}(\boldsymbol{m}_{j}, \boldsymbol{m}_{k})$, $j \neq k$, be the set of essential gates between the local minima $\boldsymbol{m}_{j}$ and $\boldsymbol{m}_{k}$ as defined e.g.\ in \cite[Definition~10.2]{BdH15}, and let $c(\boldsymbol{m}_{j}, \boldsymbol{m}_{k})$ be the value that $\widetilde{F}_{\beta, q}$ takes at $\mathcal{G}(\boldsymbol{m}_{j}, \boldsymbol{m}_{k})$, also called \emph{communication height}.
\begin{definition}[Communication height]
  For $\boldsymbol{x}, \boldsymbol{y} \in \mathcal{P}$ we define the communication height $c(\boldsymbol{x}, \boldsymbol{y})$ by
  \begin{align}
    c(\boldsymbol{x},\boldsymbol{y})
    \;=\;
    \inf_{\substack{\gamma \in \mathcal{C}([0,1],\mathcal{P}) \\ \gamma(0) = \boldsymbol{x}, \gamma(1) = \boldsymbol{y}}}
    \max_{t \in [0, 1]} \widetilde{F}_{\beta, q}(\gamma(t))
  \end{align}
  That is, the infimum (over all paths from $\boldsymbol{x}$ to $\boldsymbol{y}$) of the maximal value attained along the path for the landscape $\widetilde{F}_{\beta, q}$.
\end{definition}
\begin{proposition}\label{prop:eland}
  Let $\beta_{1}, \dots, \beta_{4}$ be the ordered relevant temperatures of the CWP model. Then, for $i, j, k \neq 0$, $j \neq k$, the critical points of $\widetilde{F}_{\beta, q}$ are described by the following table:
  \vspace{0.2cm}
  \begin{center}
    \begin{tabular}{c||cccc}
      \hline
      $\beta \in $ & $ (0, \beta_{1}]$ & $(\beta_{1}, \beta_{2})$ & $\{\beta_{2}\}$ & $(\beta_{2},\beta_{3})$
      \\
      \hline
      $\boldsymbol{m}_{0}$ & global min. & global min. & global min. & local min.
      \\ 
      $\boldsymbol{m}_{i}$ & - & local min. & global min. & global min.
      \\   
      $\mathcal{G}_{0, i}$ & - & $\{\boldsymbol{z}_{0, i}\}$ & $\{\boldsymbol{z}_{0,i}\}$ & $\{\boldsymbol{z}_{0,i}\}$
      \\  
      $\mathcal{G}_{j, k}$ & - & $\{\boldsymbol{z}_{0, j}, \boldsymbol{z}_{0, k}\}$ & $\{\boldsymbol{z}_{0, j}, \boldsymbol{z}_{0, k}\}$ & $\{\boldsymbol{z}_{0, j}, \boldsymbol{z}_{0, k}\}$
      \\
      \hline
      $\beta \in$ & $\{\beta_{3}\}$ & $(\beta_{3}, \beta_{4})$ & $\{\beta_{4}\}$ & $(\beta_{4}, \infty)$
      \\
      \hline
      $\boldsymbol{m}_{0}$ & local min. & local min. & degenerate & local max.
      \\ 
      $\boldsymbol{m}_{i}$ & global min. & global min. & global min. & global min.
      \\  
      $\mathcal{G}_{0, i}$ & $\{\boldsymbol{z}_{0, i}\}$ & $\{\boldsymbol{z}_{0, i}\}$ & - & -
      \\  
      $\mathcal{G}_{j, k}$ & $\{\boldsymbol{z}_{0, j}, \boldsymbol{z}_{0, k}, \boldsymbol{z}_{j, k}\}$ & $\{\boldsymbol{z}_{j, k}\}$ & $\{\boldsymbol{z}_{j, k}\}$ & $\{\boldsymbol{z}_{j, k}\}$
      \\
      \hline
    \end{tabular}
  \end{center}
\end{proposition}
\begin{figure}[ht]
  \centering
  \begin{subfigure}{0.45\textwidth}
    \includegraphics[width=\textwidth]{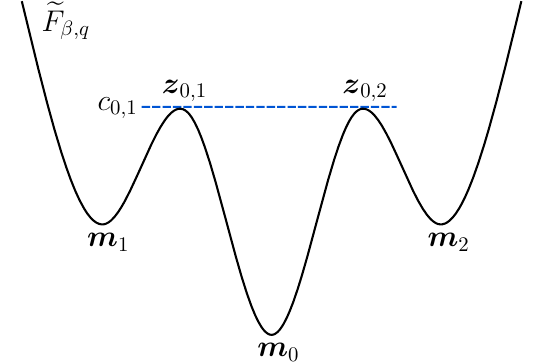}
    \caption{$\beta_{1} < \beta < \beta_{2}$}
  \end{subfigure}
  \begin{subfigure}{0.45\textwidth}
    \includegraphics[width=\textwidth]{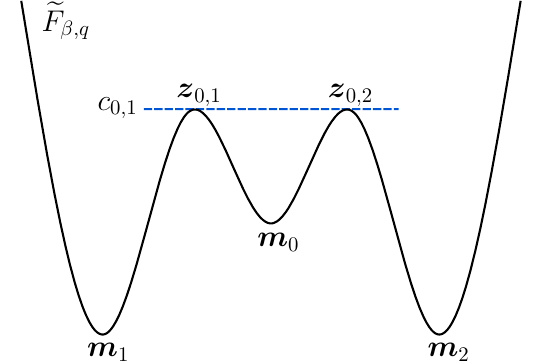}
    \caption{$\beta_{2} < \beta < \beta_{3}$}
  \end{subfigure}
  \begin{subfigure}{0.45\textwidth}
    \includegraphics[width=\textwidth]{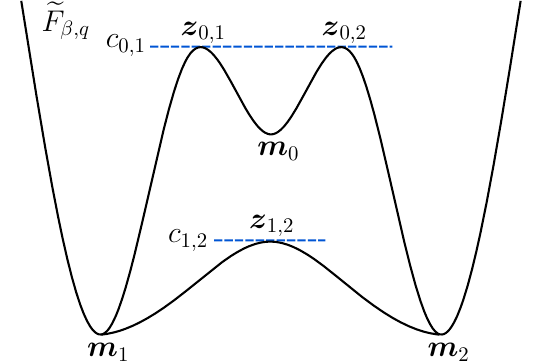}
    \caption{$\beta_{3} < \beta < \beta_{4}$}
  \end{subfigure}
  \begin{subfigure}{0.45\textwidth}
    \includegraphics[width=\textwidth]{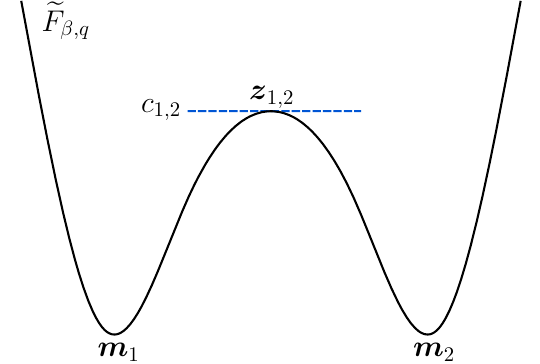}
    \caption{$\beta > \beta_{4}$}
  \end{subfigure}
  \caption{%
    Slices of the graph of $\widetilde{F}_{\beta,q}$, showing the local minima $\boldsymbol{m}_{i}$, the saddle points $\boldsymbol{z}_{i,j}$ and the communication heights $c_{i,j} \equiv c(\boldsymbol{m}_{i}, \boldsymbol{m_{j}})$. These are referenced in and used for the proofs of Proposition~\ref{prop:preem_cap_estimates} and the argument in Subsection~\ref{app:proofteo}. The landscape depicted in case (c) does not occur for $q=3$ or $q=4$, since $\beta_{3} = \beta_{4}$.
  }
  \label{fig_paths}
\end{figure}
Since the Markov process $(\widetilde{\Sigma}^{N}(t))_{t \geq 0}$ is lumpable, its metastable behavior can also be studied through the Markov process $\bigl(L_{N}(\widetilde{\Sigma}^{N}(t))\bigr)_{t \geq 0}$ defined via the empirical measure \eqref{eq:def_rho}, see e.g.~\cite[Section 9.3]{BdH15} for further properties of lumpable Markov chains.
Note that the Markov chain, $\bigl(L_{N}(\widetilde{\Sigma}^{N}(t))\bigr)_{t \geq 0}$, behaves like a weighted nearest neighbour random walk in the space $\mathcal{P}_{ N}$. Its (mesoscopic) transition probabilities can be computed from $\widetilde{\pi}_{N}$ as follows. For every $\boldsymbol{x}, \boldsymbol{y} \in \mathcal{P}_{N}$, let
\begin{align}
  \widetilde{p}_N(\boldsymbol{x}, \boldsymbol{y})
  \;=\;
  \frac{1}{\widetilde{Q}_{N}(\boldsymbol{x})}\,
  \sum_{\sigma \in L_{N}^{-1}(\boldsymbol{x})} \widetilde{\mu}_{N}(\sigma)
  \sum_{\eta \in L_{N}^{-1}(\boldsymbol{y})} \widetilde{\pi}_{N}(\sigma, \eta),
\end{align}
where $\widetilde{Q}_{N} \ldef \widetilde{\mu}_{N} \circ L_{N}^{-1}$ denotes the macroscopic equilibrium measure that can also be expressed as
\begin{align}\label{eq:Qn_expanded}
  \widetilde{Q}_N(\boldsymbol{x})
  \;=\;
  \binom{N}{N\boldsymbol{x}_{1}, \dots, N\boldsymbol{x}_{q}}\,
  \frac{\me^{\frac{N\beta}{2} (-\norm{\boldsymbol{x}}_2^2 - N^{-1})}}{\widetilde{Z}_{N}}
  \;=\; 
  \frac{\exp(-\beta N \widetilde{F}_{N}(\boldsymbol{x}))}{\widetilde{\boldsymbol{Z}}_{N}},
\end{align}
where $(2 \pi N)^{(q-1)/2} \widetilde{Z}_{N} = \widetilde{\boldsymbol{Z}}_{N}$ and
\begin{align}
  \widetilde{F}_{N}(\boldsymbol{x})
  \;=\;
  -\frac{1}{2}\, \norm{\boldsymbol{x}}_{2}^{2}
  + \frac{1}{2N} - \frac{1}{N \beta} \ln\binom{N}{N\boldsymbol{x}_{1}, \dots, N\boldsymbol{x}_{q}}
  - \frac{(q-1)}{2N\beta}\, \ln(2 \pi N).
\end{align}
By \cite[Lemma 2.2]{CS04} we have the following uniform bounds, valid for all $\boldsymbol{x} \in \mathcal{P}_{N}$,
\begin{align}\label{ineq: F-Fn_unif}
  -\frac{q-1}{N\beta}\, \ln\biggl(\frac{N+q-1}{q-1}\biggr)
  \;\leq\;
  \widetilde{F}_{\beta, q}(\boldsymbol{x}) - \widetilde{F}_{N}(\boldsymbol{x}) + \frac{1}{2N} - \frac{q-1}{2N\beta} \ln(2\pi N)
  \;\leq\;
  0,
\end{align}
where $\widetilde{F}_{\beta, q}$ is defined as in \eqref{eq:freeen2}. When restricting to compact subsets of the interior of $\mathcal{P}$, by means of the Stirling formula, the convergence speed is improved. More precisely, under these conditions
\begin{align}\label{eq:Fbeta_lim}
  \widetilde{F}_{N} \;=\; \widetilde{F}_{\beta, q} + O\bigl(1/N\bigr).
\end{align}
Moreover, for any $\boldsymbol{x}, \boldsymbol{y} \in \mathcal{P}_{N}$ such that $\boldsymbol{y} = \boldsymbol{x} + \hat{e}_{i} - \hat{e}_{j}$ for some $i, j \in \{1, \ldots, q\}$ with $i \ne j$, where $\hat{e}_{\ell} = \frac{1}{N} e_{\ell}$ denotes the rescaled unit vector in $\mathbb{R}^{q}$ in coordinate direction $\ell$, we obtain by an elementary computation that 
\begin{align}\label{eq:meso_rates}
  \widetilde{p}_{N}(\boldsymbol{x}, \boldsymbol{x} + \hat{e}_{i} - \hat{e}_{j})
  \;=\;
  \frac{\boldsymbol{x}_{j}}{q}\, \me^{-\frac{\beta N}{2}\left[-\norm{\boldsymbol{x} + \hat{e}_{i} - \hat{e}_{j}}_2^2 + \norm{\boldsymbol{x}}_{2}^{2}\right]_+}.
\end{align}
For any $N \in \mathbb{N}$ and non-empty, disjoint subsets $\boldsymbol{A}, \boldsymbol{B} \subset \mathcal{P}_{N}$ set $\mathcal{A} \ldef L_{N}^{-1}(\boldsymbol{A})$ and $\mathcal{B} = L_{N}^{-1}(\boldsymbol{B})$. Further, define \emph{macroscopic capacity} $\capacitym_{N}(\boldsymbol{A}, \boldsymbol{B})$ of the sets $\boldsymbol{A}$ and $\boldsymbol{B}$ by
\begin{align}
  \label{eq:meso:cap:variation}
  \capacitym_{N}(\boldsymbol{A}, \boldsymbol{B})
  \;=\;
  \inf_{g \in \boldsymbol{\mathcal{H}}_{\boldsymbol{A}, \boldsymbol{B}}^{N}}
  \biggl\{
    \frac{1}{2} \sum_{\boldsymbol{x}, \boldsymbol{y} \in \mathcal{P}_{N}}\mspace{-9mu}
    \widetilde{Q}_{N}(\boldsymbol{x})\,
    \widetilde{p}_{N}(\boldsymbol{x}, \boldsymbol{y})\,
    \bigl( g(\boldsymbol{x}) - g(\boldsymbol{y}) \bigr)^{2}
  \biggr\},
\end{align}
where $\boldsymbol{\mathcal{H}}_{\boldsymbol{A}, \boldsymbol{B}}^{N}$ denotes the set of all functions $g\colon \mathcal{P}_{N} \to [0, 1]$ with $g|_{\boldsymbol{A}} = 1$ and $g|_{\boldsymbol{B}} = 0$. Then, in view of the lumpability of the Markov chain $(\widetilde{\Sigma}^{N}(t))_{t \geq 0}$, cf.\ \cite[Theorem 9.7]{BdH15}, the \emph{microscopic capacity} $\widetilde{\capacity}(\mathcal{A}, \mathcal{B})$ coincides with the \emph{macroscopic capacity} $\capacitym_{N}(\boldsymbol{A}, \boldsymbol{B})$, that is,
\begin{align}\label{mesocap}
  \widetilde{\capacity}_{N}(\mathcal{A}, \mathcal{B})
  \;=\;
  \capacitym_{N}(\boldsymbol{A}, \boldsymbol{B}).
\end{align}
In particular, for $g_{\boldsymbol{A}, \boldsymbol{B}}^{N} \in \boldsymbol{\mathcal{H}}_{\boldsymbol{A}, \boldsymbol{B}}^{N}$ being the minimizer of the right-hand side of \eqref{eq:meso:cap:variation} it follows from \eqref{mesocap} that, for any $\boldsymbol{x} \in \mathcal{P}_{N} \setminus (\boldsymbol{A} \cup \boldsymbol{B})$ and $\sigma, \eta \in L_{N}^{-1}(\boldsymbol{x})$,
\begin{align}
  \label{eq:lumpprop}
  \widetilde{\Prob}^{N}_{\!\sigma}\bigl[ \tau_{\mathcal{A}}^{N} < \tau_{\mathcal{B}}^{N} \bigr]
  \;=\;
  g_{\boldsymbol{A}, \boldsymbol{B}}^{N}(L_{N}(\sigma))
  \;=\;
  g_{\boldsymbol{A}, \boldsymbol{B}}^{N}(\boldsymbol{x})
  \;=\;
  g_{\boldsymbol{A}, \boldsymbol{B}}^{N}(L_{N}(\eta))
  \;=\;
  \widetilde{\Prob}^{N}_{\!\eta}\bigl[ \tau_{\mathcal{A}}^{N} < \tau_{\mathcal{B}}^{N} \bigr].
\end{align}

\subsection{Proof of Theorem~\ref{thm:meta:cwp}} \label{app:proofteo}
The proof relies on the following proposition  providing rough estimates for the macroscopic capacity of the CWP model. 
\begin{proposition}\label{prop:preem_cap_estimates}
  Let $\beta > \beta_{1}$.
  \begin{enumerate}[(i)]
  \item
    Let $(\boldsymbol{x}_{N})_{N \in \mathbb{N}}$ and $(\boldsymbol{y}_{N})_{N \in \mathbb{N}}$ be two sequences in $\mathcal{P}$ with $\boldsymbol{x}_{N}, \boldsymbol{y}_{N} \in \mathcal{P}_{N}$ for every $N \in \mathbb{N}$. Suppose $\lim_{N \to \infty} \boldsymbol{x}_{N} = \boldsymbol{x}$ and $\lim_{N \to \infty} \boldsymbol{y}_{N} = \boldsymbol{y}$, where $\boldsymbol{x}, \boldsymbol{y}$ lie in a compact subset of the interior of $\mathcal{P}$ and are separated by a communication height $c(\boldsymbol{x}, \boldsymbol{y}) > \widetilde{F}(\boldsymbol{x}) \vee \widetilde{F}(\boldsymbol{y})$. Then there exist $\ell_{1} \equiv \ell_{1}(q)$, $\ell_{2} \equiv \ell_{2}(q) \in (0, \infty)$ and $N_{0}(\beta, q) \in \mathbb{N}$ such that, for any $N \geq N_{0}(\beta, q)$,
    \begin{align}\label{upper_cap_final}
      \frac{1}{N^{\ell_1}}
      \;\leq\;
      \frac{\capacitym_{N}(\boldsymbol{x}_{N}, \boldsymbol{y}_{N})}{\widetilde{Q}_{N}(\boldsymbol{x}_{N})}\,
      \exp\bigl(
        \beta N (c(\boldsymbol{x}, \boldsymbol{y}) - \widetilde{F}_{\beta, q}(\boldsymbol{x}_{N}))
      \bigr)
      \;\leq\;
      N^{\ell_{2}}\, .
    \end{align}
    
  \item
    Set $\boldsymbol{M}_{\!N} \ldef \{\boldsymbol{m}_{i, N} : i \in \mathcal{I}_{\beta}\} \subset \mathcal{P}_{N}$ where $\boldsymbol{m}_{i, N}$ denotes a closest lattice point approximation of $\boldsymbol{m}_{i}$ for any $i \in \mathcal{I}_{\beta}$. Then, there exist $\ell_{3} \equiv \ell_{3}(\beta, q) \in [0, \infty)$ and $N_{0}(\beta) \in \mathbb{N}$ such that, for any $N \geq N_{0}(\beta)$ and $\boldsymbol{z} \in \mathcal{P}_{N}$,
    \begin{align}\label{lower_cap_final}
      \frac{\capacitym_{N}(\boldsymbol{z}, \boldsymbol{M}_{\!N})}{\widetilde{Q}_{N}(\boldsymbol{z})}
      \;\geq\;
      N^{-\ell_3}.
    \end{align}
  \end{enumerate}
\end{proposition}
\begin{proof}
  First, let us introduce some notation that we will use throughout the proof. For $\varepsilon \in (0, 1)$, define the $\varepsilon$-interior of $\mathcal{P}$ by
  \begin{align*}
    \mathcal{P}^{\varepsilon}
    \;\ldef\;
    \bigl\{
      \boldsymbol{x} \in \mathcal{P}
      :
      \boldsymbol{x}_i > \epsilon,\, \forall\, i \in \{1, \dots, q\}
    \bigr\}.
  \end{align*}
  For any $\ell \in \{1, \ldots, q\}$ write $\partial \mathcal{P}^{\varepsilon}_{\ell}$ for the lateral boundary of $\mathcal{P}^{\varepsilon}$ and $\boldsymbol{v}^{\ell} \in \mathbb{R}^{q}$ for the corresponding (outward) normal direction that are given through
  \begin{align*}
    \begin{split}
      \partial \mathcal{P}^{\varepsilon}_{\ell}
      \;\ldef\;&
      \bigl\{
        \boldsymbol{x} \in \mathcal{P}
        :
        \boldsymbol{x}_{\ell} = \varepsilon,\,
        \boldsymbol{x}_{k} \geq \varepsilon,\,
        \forall\, k \in \{1, \ldots, q\} \setminus \{\ell\}
      \bigr\}
      \\[.5ex]
      \boldsymbol{v}^{\ell}
      \;=\;&
      (\boldsymbol{v}_{1}^{\ell}, \ldots, \boldsymbol{v}_{q}^{\ell})
      \qquad \text{with} \qquad
      \boldsymbol{v}_{k}^{\ell}
      \;\ldef\;
      \indicator_{k = \ell} - \frac{1}{q}.
    \end{split}
  \end{align*}
  For any $s \in \mathbb{R}$, let $V(s)$ be the sub-level set of the limiting free energy $F_{\beta, q}$, that is,
  \begin{align*}
    V(s)
    \;\ldef\;
    \bigl\{
      \boldsymbol{x} \in \mathcal{P}
      :
      \widetilde{F}_{\beta, q}(\boldsymbol{x}) < s
    \bigr\}.
  \end{align*}
  
  Now, let $\beta > \beta_{1}$ and set $\varepsilon_{0} \equiv \varepsilon_{0}(\beta, q) \ldef 1/(\me^{\beta} + q - 1)$. Since all critical points of $\widetilde{F}_{\beta, q}$ lie in the interior of $\mathcal{P}$, we may choose $\varepsilon \in (0, \varepsilon_{0})$ sufficiently small so that
  \begin{align}\label{eq:cond:eps:(i)}
    \bigl\{
      \boldsymbol{m}_{i} : i \in \mathcal{I}_{\beta}
    \bigr\}
    \qquad\text{and}\qquad
    \bigl\{
      \boldsymbol{z}_{i, j}
      :
      i, j \in \mathcal{I}_{\beta},\, i \neq j
    \bigr\}
    \;\subset\;
    \mathcal{P}^{\varepsilon}.
  \end{align}
  \begin{figure}[t]
    \centering
    \includegraphics[width=7cm,keepaspectratio]{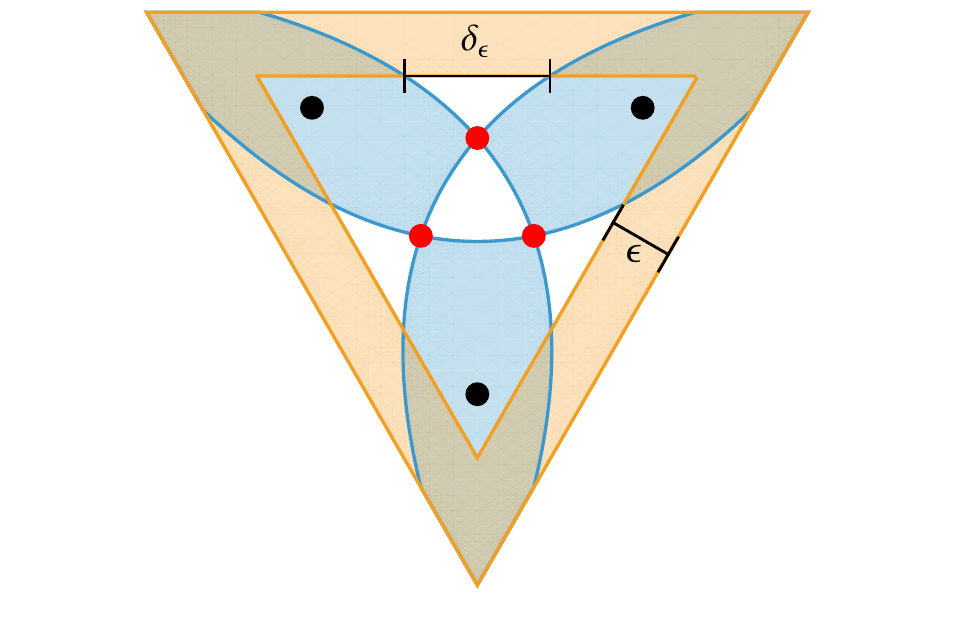}
    \begin{tikzpicture}[xscale=2.5, yscale=2.5, >=stealth]

      \def\rt{1.73205080757} 
      \def\shifty{0.8}       
      \def\trimy{1.6}        

      \coordinate (V_out) at (0,0);
      \coordinate (A_out_R) at ({1.1/\rt}, 1.1);
      \coordinate (A_out_L) at ({-1.1/\rt}, 1.1);
      \coordinate (V_in) at (0, \shifty);
      
      \pgfmathsetmacro{\dotsY}{\trimy - 0.1}
      \coordinate (End_out_R) at ({\dotsY/\rt}, \dotsY);
      \coordinate (End_out_L) at ({-\dotsY/\rt}, \dotsY);
      \coordinate (End_in_R) at ({(\dotsY-\shifty)/\rt}, \dotsY);
      \coordinate (End_in_L) at ({-(\dotsY-\shifty)/\rt}, \dotsY);

      \def\vfinaly{1.1}
      \pgfmathsetmacro{\vfinalx}{(\vfinaly - \shifty)/\rt-0.05}

      \def\stepx{0.05}
      \pgfmathsetmacro{\stepy}{\stepx*\rt}
      
      \coordinate (glob_min) at (0,0.9*\rt);
      \coordinate (V6) at (\vfinalx, \vfinaly); 
      \coordinate (V5) at (\vfinalx+\stepx, \vfinaly-\stepy);
      \coordinate (V4) at (\vfinalx, \vfinaly-2*\stepy);
      \coordinate (V3) at (\vfinalx+\stepx, \vfinaly-3*\stepy);
      \coordinate (V2) at (\vfinalx+2*\stepx, \vfinaly-4*\stepy);
      \coordinate (V1) at (\vfinalx+1*\stepx, \vfinaly-5*\stepy);

      \tikzset{
        boundary/.style={thick, orange}, 
        pathgamma_discr/.style={draw=blue, thick, solid, every node/.style={circle, fill=blue, inner sep=1.2pt}},
        pathgamma_smooth/.style={draw=black, thick, dashed,every node/.style={circle, fill=blue, inner sep=1.2pt}}
      }

      \draw[boundary] (End_out_L) -- (V_out) -- (End_out_R);
      \draw[boundary, opacity=0.6] (End_in_L) -- (V_in) -- (End_in_R);

      \node[orange, rotate=60] at ($(End_out_R)+(0.02,0.035)$) {$\dots$};
      \node[orange, rotate=-60] at ($(End_out_L)+(-0.02,0.035)$) {$\dots$};
      \node[orange, opacity=0.6, rotate=60] at ($(End_in_R)+(0.02,0.035)$) {$\dots$};
      \node[orange, opacity=0.6, rotate=-60] at ($(End_in_L)+(-0.02,0.035)$) {$\dots$};

      \node[circle, fill=black, minimum size=3.2pt, inner sep=0pt] at (glob_min) {};
      \node[font=\small] at ($(glob_min)+(0.15,0.05)$) {$\boldsymbol{m}^\ast$};

      \draw[pathgamma_discr] (V1) node {} -- (V2) node {} -- (V3) node {} -- (V4) node {} -- (V5) node {} -- (V6) node {};

      \draw[pathgamma_smooth] (V6) .. controls ($ (V6) + (-0.2, 0.3) $)  .. (glob_min) ;
      \node[anchor=west, font=\small] at ($(V6) + (-0.15, 0.28) $) {$\Gamma^N$};
      \node[anchor=west, blue, font=\small] at ($ (V2) - (0.2, 0.2) $) {$\gamma^N$};
      \node[anchor=east, blue, font=\small] at ($ (V6)  $) {$\boldsymbol{z}^\ast$};

      \clip (-0.5, -0.1) rectangle (0.6, 1.1);

    \end{tikzpicture}
    \caption{%
      Illustration of $\mathcal{P}$ for $q=3$: (Left) $V(c(\boldsymbol{x},\boldsymbol{y}))$ is shown in blue and $\mathcal{P} \setminus \mathcal{P}^{\varepsilon}$ in orange. Local minima are indicated by black dots, and essential gates are highlighted in red. (Right) Illustration of the path $(\boldsymbol{\gamma}_{0}^{N}, \ldots, \boldsymbol{\gamma}_{k_{N}}^{N})$ from $\boldsymbol{z}$ to some $\boldsymbol{z}^{\ast} \in \mathcal{P}^{\varepsilon} \cap \mathcal{P}_{N}$ used in the proof of Proposition \ref{prop:preem_cap_estimates}, part (ii).
    }
    \label{fig:level_sets}
  \end{figure}
  Consequently, there exists $\delta_{\varepsilon} \in (0, 1)$ such that, for any two distinct connected components $\boldsymbol{V}_{\!1}$ and $\boldsymbol{V}_{\!2}$ of the sub-level set $\boldsymbol{V} \ldef V(c(\boldsymbol{x}, \boldsymbol{y}))$, the Euclidean distance between $\boldsymbol{V}_{\!1} \cap (\mathcal{P} \setminus \mathcal{P}^{\varepsilon})$ and $\boldsymbol{V}_{\!2} \cap (\mathcal{P} \setminus \mathcal{P}^{\varepsilon})$ exceeds $\delta_{\varepsilon}$. Indeed, suppose to the contrary that this distance is zero for some distinct connected component $\boldsymbol{V}_{\!1}$ and $\boldsymbol{V}_{\!2}$ of $\boldsymbol{V}$. Then the closures of $\boldsymbol{V}_{\!1} \cap (\mathcal{P} \setminus \mathcal{P}^{\varepsilon})$ and $\boldsymbol{V}_{\!1} \cap (\mathcal{P} \setminus \mathcal{P}^{\varepsilon})$ intersect, so any point in $\overbar{\boldsymbol{V}_{\!1} \cap (\mathcal{P} \setminus \mathcal{P}^{\varepsilon})} \cap \overbar{\boldsymbol{V}_{\!2} \cap (\mathcal{P} \setminus \mathcal{P}^{\varepsilon})} \subset \mathcal{P} \setminus \mathcal{P}^{\varepsilon}$ is an essential gate between $\boldsymbol{x}$ and $\boldsymbol{y}$, contradicting \eqref{eq:cond:eps:(i)}.
  
  Finally, since $\widetilde{F}_{\beta, q}$ is locally Lipschitz continuous and in view of \eqref{eq:Fbeta_lim}, there exist constants $K_{\varepsilon, 1}, K_{\varepsilon, 2} \in (0, \infty)$ such that for every $\boldsymbol{z}, \boldsymbol{z}' \in \mathcal{P}^{\varepsilon/2}$ and $N \in \mathbb{N}$,
  \begin{align}\label{eq:Lcont+unif}
    \bigl|
      \widetilde{F}_{\beta, q}(\boldsymbol{z})
      - \widetilde{F}_{\beta, q}(\boldsymbol{z}')
    \bigr|
    \;\leq\;
    K_{\varepsilon, 1}\, \|\boldsymbol{z} - \boldsymbol{z'}\|_{2}
    \qquad \text{and} \qquad
    \bigl|
      \widetilde{F}_{N}(\boldsymbol{z})
      - \widetilde{F}_{\beta, q}(\boldsymbol{z})
    \bigr|
    \;\leq\;
    \frac{K_{\varepsilon, 2}}{N}.
  \end{align}
  
  \textit{(i)}
  Let $(\boldsymbol{x}_{N})_{N \in \mathbb{N}}$ and $(\boldsymbol{y}_{N})_{N \in \mathbb{N}} \subset \mathcal{P}_{N}$ be two sequences with $\boldsymbol{x}_{N}, \boldsymbol{y}_{N} \in \mathcal{P}_{N}$ for every $N \in \mathbb{N}$. Suppose that $\boldsymbol{x}_{N} \to \boldsymbol{x}$ and $\boldsymbol{y}_{N} \to \boldsymbol{y}$ as $N \to \infty$, where $\boldsymbol{x}, \boldsymbol{y} \in \operatorname{int}(\mathcal{P})$ and the communication height between $\boldsymbol{x}$ and $\boldsymbol{y}$ satisfies $\eta \ldef c(\boldsymbol{x}, \boldsymbol{y}) - \max\{\widetilde{F}_{\beta, q}(\boldsymbol{x}), \widetilde{F}_{\beta, q}(\boldsymbol{y})\} > 0$. Write $\boldsymbol{V}_{\!\boldsymbol{x}}$ to denote the connected component of $\boldsymbol{V}$ containing $\boldsymbol{x}$, and set $\boldsymbol{V}_{\!\boldsymbol{x}, N} \ldef \boldsymbol{V}_{\!\boldsymbol{x}} \cap \mathcal{P}_{N}$. Since $\widetilde{F}_{\beta, q}$ is uniformly continuous, there exists $\delta_{\eta} > 0$ such that $|\widetilde{F}_{\beta, q}(\boldsymbol{z}) - \widetilde{F}_{\beta, q}(\boldsymbol{z}')| < \eta/2$ for all $\boldsymbol{z}, \boldsymbol{z}' \in \mathcal{P}$ with $\|\boldsymbol{z} - \boldsymbol{z}'\| < \delta_{\eta}$. Hence, there exists $\bar{N} \equiv \bar{N}(\delta_{\eta}, \delta_{\varepsilon}) \in \mathbb{N}$ such that, for all $N \geq \bar{N}$, we have $\|\boldsymbol{x}_{N} - \boldsymbol{x}\| < \delta_{\eta}$, $\|\boldsymbol{y}_{N} - \boldsymbol{y}\| < \delta_{\eta}$, and any two distinct points $\boldsymbol{z}, \boldsymbol{z}' \in \mathcal{P}_{N}$ with $\widetilde{p}_{N}(\boldsymbol{z}, \boldsymbol{z}') > 0$ satisfy $\|\boldsymbol{z} - \boldsymbol{z}'\|_{2} < \min\{\delta_{\varepsilon}, \varepsilon\}/2$. In particular, for every $N \geq \bar{N}$, $\boldsymbol{x}_{N} \in \boldsymbol{V}_{\!\boldsymbol{x}}$, $\boldsymbol{y}_{N} \in \boldsymbol{V} \setminus \boldsymbol{V}_{\!\boldsymbol{x}}$, and the function $g\colon \mathcal{P}_{N} \to [0, 1]$, $\boldsymbol{z} \mapsto g(\boldsymbol{z}) \ldef \indicator_{\boldsymbol{V}_{\!\boldsymbol{x}}}(\boldsymbol{z})$ belongs to $\boldsymbol{\mathcal{H}}_{\boldsymbol{x}_{N}, \boldsymbol{y}_{N}}^{N}$. Therefore, by applying the Dirichlet principle,
  \begin{align}\label{eq:cap:xN:yN:ub}
    \capacitym_{N}(\boldsymbol{x}_{N}, \boldsymbol{y}_{N})
    \overset{\eqref{mesocap}}{\;\leq\;}
    \sum_{\boldsymbol{z} \in \boldsymbol{V}_{\!\boldsymbol{x},N}}
    \sum_{\boldsymbol{z}' \notin \boldsymbol{V}_{\!\boldsymbol{x},N}}
    \widetilde{Q}_{N}(\boldsymbol{z})\,
    \widetilde{p}_{N}(\boldsymbol{z}, \boldsymbol{z}').
  \end{align}
  To lighten notations, we write $\partial \boldsymbol{V}_{\boldsymbol{x}, N} \ldef \{(\boldsymbol{z}, \boldsymbol{z}') : \boldsymbol{z} \in \boldsymbol{V}_{\!\boldsymbol{x}, N},\, \boldsymbol{z}' \not\in \boldsymbol{V}_{\!\boldsymbol{x}, N},\, \widetilde{p}_{N}(\boldsymbol{z}, \boldsymbol{z}') > 0\}$. By distinguishing two different cases it follows from the continuity of $\widetilde{F}_{\beta, q}$ that, for any $N \geq \bar{N}$ and  $(\boldsymbol{z}, \boldsymbol{z}') \in \partial \boldsymbol{V}_{\!\boldsymbol{x}, N}$ with $\boldsymbol{z} \in \boldsymbol{V}_{\!\boldsymbol{x}} \cap \mathcal{P}^{\varepsilon}$, there exists $\bar{\boldsymbol{z}}$ on the line segment $\{\boldsymbol{z} + t (\boldsymbol{z}' - \boldsymbol{z}) : t \in [0, 1]\}  \subset \mathcal{P}^{\varepsilon/2}$ such that $\widetilde{F}_{\beta, q}(\bar{\boldsymbol{z}}) = c(\boldsymbol{x}, \boldsymbol{y})$, and
  \begin{align*}
    \bigl|
      \widetilde{F}_{N}(\boldsymbol{z}) - c(\boldsymbol{x}, \boldsymbol{y})
    \bigr|
    \;\leq\;
    \bigl|
      \widetilde{F}_{N}(\boldsymbol{z})
      - \widetilde{F}_{\beta, q}(\boldsymbol{z})
    \bigr|
    +
    \bigl|
      \widetilde{F}_{\beta, q}(\boldsymbol{z})
      - \widetilde{F}_{\beta, q}(\bar{\boldsymbol{z}})
    \bigr|
    \overset{\eqref{eq:Lcont+unif}}{\;\leq\;}
    \frac{\sqrt{2} K_{\varepsilon, 1} + K_{\varepsilon, 2}}{N}
    .
  \end{align*}
  Hence, by setting $c_{\beta, \varepsilon} \ldef \me^{\beta (\sqrt{2} K_{\varepsilon, 1} + K_{\varepsilon, 2})}$, 
  \begin{align}\label{eq:measure:xN:yN:1}
    \widetilde{Q}_{N}(\boldsymbol{z})
    \;\leq\;
    c_{\beta, \varepsilon}\,
    \frac{\exp\bigl(-\beta N c(\boldsymbol{x}, \boldsymbol{y})\bigr)}{\widetilde{\boldsymbol{Z}}_{N}},
    \qquad
    \forall\, (\boldsymbol{z}, \boldsymbol{z}') \in \partial \boldsymbol{V}_{\!\boldsymbol{x}, N}
    \text{ with } \boldsymbol{z} \in \boldsymbol{V}_{\!\boldsymbol{x}} \cap \mathcal{P}^{\varepsilon}.
  \end{align}
  Next, consider $(\boldsymbol{z}, \boldsymbol{z}') \in \partial \boldsymbol{V}_{\!\boldsymbol{x}, N}$ with $\boldsymbol{z} \in \boldsymbol{V}_{\!\boldsymbol{x}} \cap (\mathcal{P} \setminus \mathcal{P}^{\varepsilon})$. By the choice of $\delta_{\varepsilon}$ and $\bar{N}$ it follows that, for any $N \geq \bar{N}$, $\|\boldsymbol{z} - \boldsymbol{z}'\| = \sqrt{2}/N < \delta_{\varepsilon}/2$ and, hence, $\boldsymbol{z}' \not\in \boldsymbol{V}$. Therefore, $\widetilde{F}_{\beta, q}(\boldsymbol{z}') \geq c(\boldsymbol{x}, \boldsymbol{y})$ and
  \begin{align*}
    \widetilde{F}_{N}(\boldsymbol{z}')
    \overset{\eqref{ineq: F-Fn_unif}}{\;\geq\;}
    \widetilde{F}_{\beta, q}(\boldsymbol{z}')
    + \frac{1}{2N} - \frac{q-1}{2 N \beta}\, \ln(2 \pi N)
    \;\geq\;
    c(\boldsymbol{x}, \boldsymbol{y})
    - \frac{q-1}{2 N \beta}\, \ln(2 \pi N).
  \end{align*}
  Consequently, for all $N \geq \bar{N}$,
  \begin{align}\label{eq:measure:xN:yN:2}
    \widetilde{Q}_{N}(\boldsymbol{z}')
    \;\leq\;
    (2\pi N)^{(q-1)/2}\,
    \frac{\exp\bigl(-\beta N c(\boldsymbol{x}, \boldsymbol{y})\bigr)}{\widetilde{\boldsymbol{Z}}_{N}},
    \quad
    \forall\, (\boldsymbol{z}, \boldsymbol{z}') \in \partial \boldsymbol{V}_{\!\boldsymbol{x}, N}
    \text{ with } \boldsymbol{z} \in \boldsymbol{V}_{\!\boldsymbol{x}} \cap (\mathcal{P} \setminus \mathcal{P}^{\varepsilon}).
  \end{align}
  Substituting \eqref{eq:measure:xN:yN:1} and \eqref{eq:measure:xN:yN:2}, after an application of the detailed-balance condition, that reads $\widetilde{Q}_{N}(\boldsymbol{z}) \widetilde{p}_{N}(\boldsymbol{z}, \boldsymbol{z}') = \widetilde{Q}_{N}(\boldsymbol{z}') \widetilde{p}_{N}(\boldsymbol{z}', \boldsymbol{z})$, into \eqref{eq:cap:xN:yN:ub}, and using the fact that the cardinality of the set $\boldsymbol{V}_{\!\boldsymbol{x}, N}$ is bounded by $N^{q-1}$, it follows that for every $N \geq \bar{N}$,
  \begin{align}\label{eq:cap:xN:yN:ub:final}
    \capacitym_{N}(\boldsymbol{x}_{N}, \boldsymbol{y}_{N})
    \;\leq\;
    \max\bigl\{
      c_{\beta, \varepsilon}, (2\pi)^{(q-1)/2}\, N^{(q-1)/2}
    \bigr\}\, N^{q-1}\,
    \frac{
      \exp(-\beta N c(\boldsymbol{x}, \boldsymbol{y}))
    }{\widetilde{\boldsymbol{Z}}_{N}}.
  \end{align}

  The lower bound on $\capacitym_{N}(\boldsymbol{x}_{N}, \boldsymbol{y}_{N})$ is proved using Rayleigh's shortcut method. For any $N \geq \bar{N}$, we have $c(\boldsymbol{x}, \boldsymbol{y}) = c(\boldsymbol{x}_{N}, \boldsymbol{y}_{N})$. Hence, there exists a continuous path $\Gamma^{N}\colon [0, 1] \to \operatorname{int}(\mathcal{P})$ with $\Gamma^{N}(0) = \boldsymbol{x}_{N}$, $\Gamma^{N}(1) = \boldsymbol{y}_{N}$, $\max_{t \in [0, 1]} \widetilde{F}_{\beta, q}(\Gamma^{N}(t)) = c(\boldsymbol{x}, \boldsymbol{y})$ and $\Gamma^{N}(t) \in \operatorname{int}(\boldsymbol{V})$ for all except of finitely many $t \in [0, 1]$. Let $\boldsymbol{\gamma}^{N} = (\boldsymbol{\gamma}^{N}_{0}, \dots, \boldsymbol{\gamma}^{N}_{K_N})$ be a vertex-disjoint, lattice approximation of $\Gamma^{N}$ with $\boldsymbol{\gamma}_{i}^{N} \in \mathcal{P}_{N}$ and $\min_{t \in [0, 1]} \|\boldsymbol{\gamma}_{i}^{N} - \Gamma^{N}(t)\|_{2} \leq \sqrt{q} / N$ for all $i \in \{0, \ldots, K_{N}\}$ and  $\widetilde{p}_{N}(\boldsymbol{\gamma}_{i}^{N}, \boldsymbol{\gamma}_{i+1}^{N}) > 0$ for all $i \in \{0, \ldots, K_{N}-1\}$. Moreover, if $\Gamma^{N}$ contains line segments lying in $\mathcal{P} \setminus \mathcal{P}^{\varepsilon}$, the corresponding part of the lattice path $\boldsymbol{\gamma}^{N}$ is contained in $\boldsymbol{V} \cap \mathcal{P}_{N}$. Then, by applying Rayleigh's shortcut method,
  \begin{align}\label{eq:cap:xN:yN:lb}
    \capacitym_{N}(\boldsymbol{x}_N, \boldsymbol{y}_{N})
    \overset{\eqref{eq:rayleigh_def}}{\;\geq\;}
    \Biggl(
      \sum_{i=0}^{K_{N}-1}
      \frac{1}
      {%
        \widetilde{Q}_{N}(\boldsymbol{\gamma}^{N}_{i})
        \widetilde{p}_{N}(\boldsymbol{\gamma}^{N}_{i}, \boldsymbol{\gamma}^{N}_{i+1})
      }
    \Biggr)^{\!\!-1}
    .
  \end{align}
  Notice that, for any $i \in \{0, \ldots, K_{N}\}$ such that $\boldsymbol{\gamma}_{i}^{N} \in \boldsymbol{V} \cap \mathcal{P}_{N}$ we have $\widetilde{F}_{\beta, q}(\boldsymbol{\gamma}_{i}^{N}) < c(\boldsymbol{x}, \boldsymbol{y})$. Hence, for any $N \geq \bar{N}$,
  \begin{align*}
    \widetilde{F}_{N}(\boldsymbol{\gamma}_{i}^{N})
    \overset{\eqref{ineq: F-Fn_unif}}{\;\leq\;}
    \widetilde{F}_{\beta, q}(\boldsymbol{\gamma}_{i}^{N})
    + \frac{1}{2N}
    + \frac{q-1}{N \beta}\, \ln\Bigl( \frac{N+q-1}{q-1}\Bigr)
    \;\leq\;
    c(\boldsymbol{x}, \boldsymbol{y})
    + \frac{1}{2N}
    + \frac{q-1}{N \beta}\, \ln N.
  \end{align*}
  Moreover, for each $i \in \{0, \ldots, K_{N}\}$ such that $\boldsymbol{\gamma}_{i}^{N} \not\in \boldsymbol{V}$ there exists $t_{i} \in [0, 1]$ with $\min_{t \in [0, 1]} \|\boldsymbol{\gamma}_{i}^{N} - \Gamma^{N}(t)\|_{2} = \|\boldsymbol{\gamma}_{i}^{N} - \Gamma^{N}(t_{i})\|_{2}$. In particular, for any $N \geq \bar{N}$, we have that both $\boldsymbol{\gamma}_{i}^{N}$ and $\Gamma^{N}(t_{i})$ lie in $\mathcal{P}^{\varepsilon/2}$. Hence, by \eqref{eq:Lcont+unif},
  \begin{align*}
    \widetilde{F}_{N}(\boldsymbol{\gamma}_{i}^{N})
    &\;\leq\;
    \widetilde{F}_{\beta, q}(\Gamma^{N}(t_{i}))
    +
    \bigl|
      \widetilde{F}_{\beta, q}(\boldsymbol{\gamma}_{i}^{N})
      - \widetilde{F}_{\beta, q}(\Gamma^{N}(t_{i}))
    \bigr|
    +
    \bigl|
      \widetilde{F}_{N}(\boldsymbol{\gamma}_{i}^{N})
      - \widetilde{F}_{\beta, q}(\boldsymbol{\gamma}_{i}^{N})
    \bigr|
    \\[.5ex]
    &\;\leq\;
    c(\boldsymbol{x}, \boldsymbol{y})
    + K_{\varepsilon, 1} \|\boldsymbol{\gamma}^{N}_{i} - \Gamma^{N}(t)\|_{2}
    + \frac{K_{\varepsilon, 2}}{N}
    \\
    &\;\leq\;
    c(\boldsymbol{x}, \boldsymbol{y})
    + \frac{\sqrt{q} K_{\varepsilon, 1} + K_{\varepsilon, 2}}{N}.
  \end{align*}
  Therefore, by setting $c_{\beta, \varepsilon}' \ldef \me^{-\beta(\sqrt{q} K_{\varepsilon, 1} + K_{\varepsilon, 2})}$ and using the estimates above, we obtain for all $N \geq \bar{N}$,
  \begin{align}\label{eq:measure:xN:yN:3}
    \widetilde{Q}_{N}(\boldsymbol{\gamma}_{i}^{N})
    \;\geq\;
    \min\bigl\{
      c_{\beta, \varepsilon}',
      \me^{-\beta/2} N^{-(q-1)}
    \bigr\}\,
    \frac{
      \exp\bigl(-\beta N c(\boldsymbol{x}, \boldsymbol{y})\bigr)
    }{\widetilde{\boldsymbol{Z}}_{N}}
    \qquad
    \forall\, i \in \{0, \ldots, K_{N}\}.
  \end{align}
  Substituting \eqref{eq:measure:xN:yN:3} into \eqref{eq:cap:xN:yN:lb} and using the facts that $\widetilde{p}_{N}(\boldsymbol{\gamma}_{i}^{N}, \boldsymbol{\gamma}_{i+1}^{N}) \geq \me^{-\beta}/(N q)$ and $K_{N} \leq N^{q-1}$, it follows that for every $N \geq \bar{N}$,
  \begin{align}\label{eq:cap:xN:yN:lb:final}
    \capacitym_{N}(\boldsymbol{x}_{N}, \boldsymbol{y}_{N})
    \;\geq\;
    \min\bigl\{
      c_{\beta, \varepsilon}',
      \me^{-\beta/2} N^{-(q-1)}
    \bigr\}\,
    \frac{\me^{-\beta}}{q}\, N^{-q}\,
    \frac{
      \exp\bigl(-\beta N c(\boldsymbol{x}, \boldsymbol{y})\bigr)
    }{\widetilde{\boldsymbol{Z}}_{N}}.
  \end{align}
  
  Finally, in view of \eqref{ineq: F-Fn_unif}, we have that for any $N \geq \bar{N}$,
  \begin{align*}
    \me^{-\beta/2}\, N^{-(q-1)}\,
    \frac{
      \exp\bigl(
        -\beta N \widetilde{F}_{\beta, q}(\boldsymbol{x}_{N})
      \bigr)
    }{\widetilde{\boldsymbol{Z}}_{N}}
    \;\leq\;
    \widetilde{Q}_{N}(\boldsymbol{x}_{N})
    \;\leq\;
    (2 \pi N)^{(q-1)/2}\,
    \frac{
      \exp\bigl(
        -\beta N \widetilde{F}_{\beta, q}(\boldsymbol{x}_{N})
      \bigr)
    }{\widetilde{\boldsymbol{Z}}_{N}}.
  \end{align*}
  By combining the above estimate with \eqref{eq:cap:xN:yN:ub:final} and \eqref{eq:cap:xN:yN:lb:final} the assertion \eqref{upper_cap_final} follows.
  \smallskip
  
  \textit{(ii)}
  Set $\bar{N} \equiv \bar{N}(\varepsilon) \ldef \max\{1/(\me^{-\beta} - \varepsilon), 2\sqrt{q}/\varepsilon\}$. For $N \geq \bar{N}$ fix some $\boldsymbol{z} \in \mathcal{P}_{N}$ and let $\boldsymbol{M}_{\!N} \ldef \{\boldsymbol{m}_{i, N} : i \in \mathcal{I}_{\beta}\}$ be the set of nearest lattice approximation of the different local minima of $\widetilde{F}_{\beta, q}$. Again, the proof of a lower bound on $\capacitym_{N}(\boldsymbol{z}, \boldsymbol{M}_{\!N})$ uses Rayleigh's shortcut method. However, due to the divergence of $\nabla \widetilde{F}_{\beta, q}$ at the boundary of $\mathcal{P}$, extra care is required when constructing a vertex-disjoint lattice path which starts at $\boldsymbol{z}$ and ends in $\boldsymbol{M}_{N}$. 
  Therefore, we treat the cases $\boldsymbol{z} \in \mathcal{P}^{\varepsilon}$ and $\boldsymbol{z} \in \mathcal{P} \setminus \mathcal{P}^{\varepsilon}$ separately.
  
  For $\boldsymbol{z} \in \mathcal{P}^{\varepsilon}$ let $\Gamma\colon [0, 1] \to \operatorname{int}(\mathcal{P})$ be the time-reparametrised solution of the initial-value problem $\dot{x}(t) = -\nabla\widetilde{F}_{\beta, q}(x(t))$ with $x(0) = \boldsymbol{z}$, chosen so that $\Gamma(1) \in \{\boldsymbol{m}_{i} : i \in \mathcal{I}_{\beta}\}$. Notice that for any $\ell \in \{1, \ldots, q\}$ and $\boldsymbol{x} \in \partial \mathcal{P}^{\varepsilon}_{\ell}$,
  \begin{align*}
    \nabla \widetilde{F}_{\beta, q}(\boldsymbol{x}) \cdot \boldsymbol{v}^{\ell}
    &\;=\;
    -\boldsymbol{x}_{l} + \frac{1}{q}
    + \frac{1}{q \beta} \ln\biggl( \frac{(\boldsymbol{x}_{l})^{q}}{\prod_{i=1}^{q} \boldsymbol{x}_{i}} \biggr)
    \\[.5ex]
    &\;\leq\;
    -\frac{1}{q\beta}\ln\biggl( \frac{1}{\varepsilon} - q + 1 \biggr)
    + \frac{1}{q} - \varepsilon
    \;\leq\;
    -\varepsilon
    \;<\;
    0,
  \end{align*}
  where the first inequality follows from the fact that the resulting expression is maximised when all coordinates except one equal $\varepsilon$, and the second uses $\varepsilon < \varepsilon_{0}$. Hence, the vector field points strictly into $\mathcal{P}^{\varepsilon}$, so $\Gamma(t) \in \mathcal{P}^{\varepsilon}$ for all $t \in [0, 1]$.
  
  Let $\boldsymbol{\gamma}^{N} = (\boldsymbol{\gamma}_{0}^{N}, \ldots, \boldsymbol{\gamma}_{K_{N}}^{N})$ be a vertex-disjoint, lattice approximation of $\Gamma$ with $\boldsymbol{\gamma}_{0}^{N} = \boldsymbol{z}$, $\boldsymbol{\gamma}_{K_{N}}^{N} \in \boldsymbol{M}_{\!N}$ such that $\boldsymbol{\gamma}_{i}^{N} \in \mathcal{P}_{N}$ and $\min_{t \in [0, 1]} \|\boldsymbol{\gamma}_{i}^{N} - \Gamma(t)\|_{2} \leq \sqrt{q} / N$ for all $i \in \{0, \ldots, K_{N}\}$, and $\widetilde{p}_{N}(\boldsymbol{\gamma}_{i}^{N}, \boldsymbol{\gamma}_{i+1}^{N}) > 0$ for every $i \in \{0, \ldots, K_{N}-1\}$. Then, by applying Rayleigh's shortcut method,
  \begin{align*}
    \frac{
      \capacitym_{N}(\boldsymbol{z}, \boldsymbol{M}_{N})
    }{\widetilde{Q}_{N}(\boldsymbol{z})}
    \overset{\eqref{eq:rayleigh_def}}{\;\geq\;}
    \Biggl(
      \sum_{i=0}^{K_{N}-1}
      \frac{
        \widetilde{Q}_{N}(\boldsymbol{z})
      }
      {%
        \widetilde{Q}_{N}(\boldsymbol{\gamma}^{N}_{i})
        \widetilde{p}_{N}(\boldsymbol{\gamma}^{N}_{i}, \boldsymbol{\gamma}^{N}_{i+1})
      }
    \Biggr)^{\!\!-1}.
  \end{align*}
  By construction, $\Gamma(t) \in \mathcal{P}^{\varepsilon}$ for every $t \in [0, 1]$ and, for each $i \in \{0, \ldots, K_{N}\}$, we can find $t_{i} \in [0, 1]$ so that $\min_{t \in [0, 1]} \|\boldsymbol{\gamma}_{i}^{N} - \Gamma(t)\|_{2} = \|\boldsymbol{\gamma}_{i}^{N} - \Gamma(t_{i})\|_{2}$. This implies that, for any $N \geq \bar{N}$, the lattice path $\boldsymbol{\gamma}^{N}$ lies in $\mathcal{P}^{\varepsilon/2}$. Therefore, by applying \eqref{eq:Lcont+unif} and using the fact that  $\widetilde{F}_{\beta, q}(\boldsymbol{z}) = \widetilde{F}_{\beta, q}(\Gamma(t_{0})) \geq \widetilde{F}_{\beta, q}(\Gamma(t_{i}))$ for any $i \in \{0, \ldots, K_{N}\}$,
  \begin{align*}
    \widetilde{F}_{N}(\boldsymbol{z}) - \widetilde{F}_{N}(\boldsymbol{\gamma}_{i}^{N})
    &\;\geq\;
    \widetilde{F}_{\beta, q}(\boldsymbol{z}) - \widetilde{F}_{\beta, q}(\Gamma(t_{i}))
    -
    \bigl|
      \widetilde{F}_{\beta, q}(\Gamma(t_{i})) - \widetilde{F}_{\beta, q}(\boldsymbol{\gamma}_{i}^{N})
    \bigr|
    \\
    &\mspace{36mu}-
    \bigl|
      \widetilde{F}_{N}(\boldsymbol{z}) - \widetilde{F}_{\beta, q}(\boldsymbol{z})
    \bigr|
    -
    \bigl|
      \widetilde{F}_{\beta, q}(\boldsymbol{\gamma}_{i}^{N}) - \widetilde{F}_{N}(\boldsymbol{\gamma}_{i}^{N})
    \bigr|
    \\[.5ex]
    &\;\geq\;
    -\frac{\sqrt{q} K_{\varepsilon, 1} + 2 K_{\varepsilon, 2}}{N}.
  \end{align*}
  Hence,
  \begin{align}
    \frac{
      \capacitym_{N}(\boldsymbol{z}, \boldsymbol{M}_{N})
    }{\widetilde{Q}_{N}(\boldsymbol{z})}
    \;\geq\;
    \me^{-\beta(\sqrt{q} K_{\varepsilon, 1} + 2 K_{\varepsilon, 2})}\,
    \frac{\me^{-\beta}}{q}\, N^{-q},
  \end{align}
  where we again used that $\widetilde{p}_{N}(\boldsymbol{\gamma}_{i}^{N}, \boldsymbol{\gamma}_{i+1}^{N}) \geq \me^{-\beta} / (q N)$ and $K_{N} \leq N^{q-1}$.

  It remains to consider the case that $\boldsymbol{z} \in \mathcal{P} \setminus \mathcal{P}^{\varepsilon}$. We first construct explicitly a vertex-disjoint lattice path $(\boldsymbol{\gamma}_{0}^{N}, \ldots, \boldsymbol{\gamma}_{k_{N}}^{N})$ from $\boldsymbol{z}$ to some $\boldsymbol{z}^{\ast} \in \mathcal{P}^{\varepsilon} \cap \mathcal{P}_{N}$ along which $\widetilde{F}_{N}$ is non-increasing, see Figure \ref{fig:level_sets} (Right). The remaining lattice path $(\boldsymbol{\gamma}_{k_{N}+1}^{N}, \ldots, \boldsymbol{\gamma}_{K_{N}}^{N})$ from $\boldsymbol{z}^{\ast}$ to $\boldsymbol{M}_{N}$ is then obtained, as above, as a vertex-disjoint lattice approximation of the time-reparametrised solution $\Gamma\colon [0, 1] \to \mathcal{P}^{\varepsilon}$ of the initial-value problem $\dot{x}(t) = -\nabla\widetilde{F}_{\beta, q}(x(t))$ with $x(0) = \boldsymbol{z}^{\ast}$, chosen so that $\Gamma(1) \in \{\boldsymbol{m}_{i} : i \in \mathcal{I}_{\beta}\}$. We are now going to construct $(\boldsymbol{\gamma}_{0}^{N}, \ldots, \boldsymbol{\gamma}_{k_{N}}^{N})$ inductively. Set $\boldsymbol{\gamma}_{0}^{N} \ldef \boldsymbol{z}$, and suppose that for some $i \in \mathbb{N}$ the vertex-disjoint path $(\boldsymbol{\gamma}_{0}^{N}, \ldots, \boldsymbol{\gamma}_{i}^{N})$ has already been constructed such that $\boldsymbol{\gamma}_{j}^{N} \in \mathcal{P}_{N} \setminus \mathcal{P}^{\varepsilon}$ for all $j \in \{0, \ldots, i\}$, $\widetilde{p}_{N}(\boldsymbol{\gamma}_{j}^{N}, \boldsymbol{\gamma}_{j+1}^{N}) > 0$ and $\widetilde{F}_{N}(\boldsymbol{\gamma}_{j}^{N}) \geq \widetilde{F}_{N}(\boldsymbol{\gamma}_{j+1}^{N})$ for every $j \in \{0, \ldots, i-1\}$. By the pigeonhole principle and the choice $\varepsilon < \varepsilon_{0}$, it follows that, for any $N \geq \bar{N}$, there exist $k, \ell \in \{1, \ldots, q\}$ such that $(\boldsymbol{\gamma}_{i}^{N})_{k} < \varepsilon$ and $(\boldsymbol{\gamma}_{i}^{N})_{\ell} \geq \varepsilon + 1/N$. Set $\boldsymbol{\gamma}_{i+1}^{N} \ldef \boldsymbol{\gamma}_{i}^{N} + \hat{e}_{k} - \hat{e}_{\ell} \in \mathcal{P}_{N}$. By construction, the extended lattice path $(\boldsymbol{\gamma}_{0}^{N}, \ldots, \boldsymbol{\gamma}_{i+1}^{N})$ remains vertex-disjoint and, in view of \eqref{eq:meso_rates}, we have $\widetilde{p}_{N}(\boldsymbol{\gamma}_{i}^{N}, \boldsymbol{\gamma}_{i+1}^{N}) > 0$. Moreover,
  \begin{align*}
    \widetilde{F}_{N}(\boldsymbol{\gamma}_{i}^{N})
    - \widetilde{F}_{N}(\boldsymbol{\gamma}_{i+1}^{N})
    \;=\;
    \frac{1}{N}\,
    \Bigl(
      f\bigl( (\boldsymbol{\gamma}_{i}^{N})_{k} + 1/N \bigr)
      - f\bigl( (\boldsymbol{\gamma}_{i}^{N})_{\ell} \bigr)
    \Bigr),
  \end{align*}
  where $f\colon (0, 1] \to \mathbb{R}$ is the function $x \mapsto f(x) = x - \beta^{-1} \ln x$. Since $f$ attains its global minimum at $\beta^{-1}$ and for any $N \geq \bar{N}$ one has $\varepsilon + 1/N \leq \me^{-\beta} < \beta^{-1}$, using also that \(\bar{N} \geq 1/(\me^{-\beta} - \varepsilon)\), it follows that
  \begin{align*}
    f\bigl( (\boldsymbol{\gamma}_{i}^{N})_{k} + 1/N \bigr)
    \;\geq\;
    f\bigl( \varepsilon + 1/N \bigr)
    \;=\;
    \max_{x \in [\varepsilon + 1/N, 1]} f(x)
    \;\geq\;
    f\bigl((\boldsymbol{\gamma}_{i}^{N})_{\ell}\bigr).
  \end{align*}
  Hence, $\widetilde{F}_{N}(\boldsymbol{\gamma}_{i}^{N}) \geq \widetilde{F}_{N}(\boldsymbol{\gamma}_{i+1}^{N})$. If $\boldsymbol{\gamma}_{i+1}^{N} \in \mathcal{P} \setminus \mathcal{P}^{\varepsilon}$, repeat the construction. Otherwise, set $k_{N} \ldef i+1$ and $\boldsymbol{z}^{*} \ldef \boldsymbol{\gamma}_{i+1}^{N}$.
  
  Consequently, by using that $\widetilde{F}_{N}(\boldsymbol{z}) - \widetilde{F}_{N}(\boldsymbol{\gamma}_{i}^{N}) \geq -(\sqrt{q} K_{\varepsilon, 1} + 2 K_{\varepsilon, 2}) / N$ for all $i \in \{0, \ldots, K_{N}\}$, it follows from Rayleigh's shortcut method,
  \begin{align}
    \frac{%
      \capacitym_{N}(\boldsymbol{z}, \boldsymbol{M}_{N})
    }{\widetilde{Q}_{N}(\boldsymbol{z})}
    \;\geq\;
    \Biggl(
      \sum_{i=0}^{K_{N}-1}
      \frac{
        \widetilde{Q}_{N}(\boldsymbol{z})
      }
      {%
        \widetilde{Q}_{N}(\boldsymbol{\gamma}^{N}_{i})
        \widetilde{p}_{N}(\boldsymbol{\gamma}^{N}_{i}, \boldsymbol{\gamma}^{N}_{i+1})
      }
    \Biggr)^{\!\!-1}
    \;\geq\;
    \me^{-\beta(\sqrt{q} K_{\varepsilon, 1} + 2 K_{\varepsilon, 2})}\,
    \frac{\me^{-\beta}}{q}\, N^{-q}.
  \end{align}
  This completes the proof of \eqref{lower_cap_final}.
\end{proof}
\begin{proof}[Proof of Theorem~\ref{thm:meta:cwp}]
  Recall that $\mathcal{M}_{N} \ldef \bigcup_{i \in \mathcal{I}_{\beta}} \mathcal{M}_{i, N}$, where, for each $i \in \mathcal{I}_{\beta}$, the subset $\mathcal{M}_{i, N} \subset \mathcal{S}_{N}$ (defined in \eqref{eq:metaset}) is the preimage under $L_{N}$ of $\boldsymbol{m}_{i, N}$. To verify that the Markov chain $(\widetilde{\Sigma}^{N}(t))_{t \geq 0}$ is $\varrho_{N}$-metastable with respect to the metastable set $\{\mathcal{M}_{i, N} : i \in \mathcal{I}_{\beta}\}$ for a suitably chosen decreasing sequence $(\varrho_{N})_{N \in \mathbb{N}} \subset (0, \infty)$, it suffices to estimate the numerator in \eqref{eq:rhometa} from above and denominator from below.

  First, by sub-additivity, we obtain for any $j \in \mathcal{I}_{\beta}$,
  \begin{align}
    &\widetilde{\Prob}^{N}_{\tilde{\mu}_{N} \rvert \mathcal{M}_{j,N}}\bigl[
      \tilde{\tau}^{N}_{\mathcal{M}_{N} \setminus \mathcal{M}_{j,N}} < \tilde{\tau}^{N}_{\mathcal{M}_{j,N}}
    \bigr]
    \nonumber\\[.5ex]
    &\mspace{36mu}\leq\;
    \sum_{\substack{i \in \mathcal{I}_{\beta} \\ i \neq j}}
    \widetilde{\Prob}^{N}_{\tilde{\mu}_{N} \rvert \mathcal{M}_{j,N}}\bigl[
      \tilde{\tau}^{N}_{\mathcal{M}_{i,N}} < \tilde{\tau}^{N}_{\mathcal{M}_{j,N}}
    \bigr]
    \overset{\eqref{eq:escape_prob}}{\;\leq\;}
    |\mathcal{I}_{\beta}|\,
    \max_{\substack{i, j \in \mathcal{I}_{\beta} \\ i \neq j}}
    \frac{%
      \widetilde{\capacity}_{N}(\mathcal{M}_{j, N}, \mathcal{M}_{i, N})
    }{\tilde{\mu}_{N}\bigl[\mathcal{M}_{j, N}\bigr]}.
  \end{align}
  Using the lumpability of $(\widetilde{\Sigma}^{N}(t))_{t \geq 0}$, we further obtain that for every distinct $i, j \in \mathcal{I}_{\beta}$,
  \begin{align}
    \frac{\widetilde{\capacity}_{N}(\mathcal{M}_{i, N}, \mathcal{M}_{j, N})}{\tilde{\mu}_{N}[\mathcal{M}_{j, N}]}
    \overset{\eqref{mesocap}}{\;=\;}
    \frac{\capacitym_{N}(\boldsymbol{m}_{i, N}, \boldsymbol{m}_{j, N})}{\widetilde{Q}_{N}(\boldsymbol{m}_{j, N})}.
  \end{align}
  Moreover, by construction, for every $i \in \mathcal{I}_{\beta}$ the sequence $(\boldsymbol{m}_{i, N})_{N \in \mathbb{N}}$ converges to the global minima, $\boldsymbol{m}_{i}$, of the limiting free energy $\widetilde{F}_{\beta, q}$. Since the global minima $\boldsymbol{m}_{i}$ and $\boldsymbol{m}_{j}$, as described in Proposition~\ref{prop:eland}, are separated by the essential gate $\mathcal{G}(\boldsymbol{m}_{i}, \boldsymbol{m}_{j})$ with communication height $c(\boldsymbol{m}_{i}, \boldsymbol{m}_{j})$, an application of \eqref{upper_cap_final} yields for all $N$ large enough
  \begin{align}\label{eq:upper_cap_prefinal}
    N^{-\ell_{1}}
    \;\leq\;
    \frac{%
      \capacitym_{N}(\boldsymbol{m}_{i, N}, \boldsymbol{m}_{j, N})
    }{\widetilde{Q}_{N}(\boldsymbol{m}_{j, N})}\,
    \me^{\beta N (c(\boldsymbol{m}_{i}, \boldsymbol{m}_{j}) - \widetilde{F}_{\beta, q}(\boldsymbol{m}_{j}))}
    \;\leq\;
    N^{\ell_{2}}.
  \end{align}
  Hence, by combining the estimates above, we obtain that, for every sufficiently large $N$
  \begin{align}\label{eq:def:meta:ub}
    \widetilde{\Prob}^{N}_{\tilde{\mu}_{N} \rvert \mathcal{M}_{j,N}}\bigl[
      \tilde{\tau}^{N}_{\mathcal{M}_{N} \setminus \mathcal{M}_{j,N}}
      < \tilde{\tau}^{N}_{\mathcal{M}_{j,N}}
    \bigr]
    \;\leq\;
    |\mathcal{I}_{\beta}|\, N^{\ell_{2}}
    \max_{\substack{i, j \in \mathcal{I}_{\beta} \\ i \neq j}}
    \me^{-\beta N (c(\boldsymbol{m}_{i}, \boldsymbol{m}_{j}) - \widetilde{F}_{\beta, q}(\boldsymbol{m}_{j}))}
  \end{align}

  Next, we establish a lower bound for the denominator in \eqref{eq:rhometa}. For any $\mathcal{X} \subset \mathcal{S}_{N} \setminus \mathcal{M}_{N}$ there exists $\{\boldsymbol{x}_{k} : k = 1, \ldots, K\} \subset \mathcal{P}_{N}$ such that
  \begin{align*}
    \mathcal{X} \cap L_{N}^{-1}(\boldsymbol{x}_{k}) \;\neq\; \emptyset,
    \quad \forall\, k \in \{1, \ldots, K\}
    \qquad \text{and} \qquad
    \mathcal{X} \;\subset\; \bigcup_{k=1}^{K} L_{N}^{-1}(\boldsymbol{x}_{k}).
  \end{align*}
  In the sequel, set $\boldsymbol{M}_{\!N} \ldef \{\boldsymbol{m}_{i} : i \in \mathcal{I}_{\beta}\}$, and write $\mathcal{X}_{k} \ldef L_{N}^{-1}(\boldsymbol{x}_{k})$ for any $k \in \{1, \ldots, K\}$. Then,
  \begin{align}\label{eq:def:meta:lb1}
    \widetilde{\Prob}^{N}_{\tilde{\mu}_{N} \rvert \mathcal{X}}\bigl[
      \tilde{\tau}^{N}_{\mathcal{M}_{N}} < \tilde{\tau}^{N}_{\mathcal{X}}
    \bigr]
    \overset{\eqref{eq:escape_prob}}{\;=\;}
    \frac{%
      \widetilde{\capacity}_{N}(\mathcal{X}, \mathcal{M}_{N})
    }{\tilde{\mu}_{N}[\mathcal{X}]}
    \overset{\!\eqref{eq:monotonic}\!}{\;\geq\;}
    \frac{1}{K} \sum_{k=1}^{K}
    \frac{
      \widetilde{\capacity}(\mathcal{X} \cap \mathcal{X}_{k}, \mathcal{M}_{N})
    }{\tilde{\mu}_{N}[\mathcal{X}]}.
  \end{align}
  Since for any $k \in \{1, \ldots, K\}$,
  \begin{align*}
    \widetilde{\capacity}(\mathcal{X} \cap \mathcal{X}_{k}, \mathcal{M}_{N})
    &\overset{\eqref{eq:cap_def}}{\;=\;}
    \mspace{-6mu}\sum_{\sigma \in \mathcal{X} \cap \mathcal{X}_{k}}\mspace{-6mu} \tilde{\mu}_{N}(\sigma)\,
    \widetilde{\Prob}_{\!\sigma}^{N}\bigl[
      \tilde{\tau}_{\mathcal{M}_{N}}^{N} < \tilde{\tau}_{\mathcal{X} \cap \mathcal{X}_{k}}
    \bigr]
    \;\geq\;
    \mspace{-6mu}\sum_{\sigma \in \mathcal{X} \cap \mathcal{X}_{k}}\mspace{-6mu} \tilde{\mu}_{N}(\sigma)\,
    \widetilde{\Prob}_{\!\sigma}^{N}\bigl[
      \tilde{\tau}_{\mathcal{M}_{N}}^{N} < \tilde{\tau}_{\mathcal{X}_{k}}
    \bigr]
  \end{align*}
  and, in view of \eqref{eq:lumpprop}, $\widetilde{\Prob}_{\sigma}^{N}[\mathcal{X} \cap \mathcal{X}_{k}] = \widetilde{\Prob}^{N}_{\tilde{\mu}_{N} \rvert \mathcal{X}_{k}}[\mathcal{X} \cap \mathcal{X}_{k}]$ for any $\sigma \in \mathcal{X}_{k}$, we obtain that
  \begin{align}\label{eq:def:meta:lb2}
    \widetilde{\capacity}(\mathcal{X} \cap \mathcal{X}_{k}, \mathcal{M}_{N})
    &\;\geq\;
    \tilde{\mu}_{N}\bigl[\mathcal{X} \cap \mathcal{X}_{k}\bigr]\,
    \widetilde{\Prob}^{N}_{\tilde{\mu}_{N} \rvert \mathcal{X}_{k}}\bigl[
      \tilde{\tau}^{N}_{\mathcal{M}_{N}} < \tilde{\tau}^{N}_{\mathcal{X}_{k}}
    \bigr]
    \nonumber\\[.5ex]
    &\;=\;
    \tilde{\mu}_{N}\bigl[\mathcal{X} \cap \mathcal{X}_{k}\bigr]\,
    \frac{%
      \widetilde{\capacity}_{N}(\mathcal{X}_{k}, \mathcal{M}_{\!N})
    }{\tilde{\mu}_{N}[\mathcal{X}_{k}]}
    \;=\;
    \tilde{\mu}_{N}\bigl[\mathcal{X} \cap \mathcal{X}_{k}\bigr]\,
    \frac{%
      \capacitym_{N}(\boldsymbol{x}_{k}, \boldsymbol{M}_{N})
    }{\widetilde{Q}_{N}(\boldsymbol{x}_{k})}.
  \end{align}
  Thus, by combining \eqref{eq:def:meta:lb1} and \eqref{eq:def:meta:lb2} and applying Proposition~\ref{prop:preem_cap_estimates}, we obtain, for any sufficiently large enough $N$,
  \begin{align}\label{eq:def:meta:lb}
    \widetilde{\Prob}^{N}_{\tilde{\mu}_{N} \rvert \mathcal{X}}\bigl[
      \tilde{\tau}^{N}_{\mathcal{M}_{N}} < \tilde{\tau}^{N}_{\mathcal{X}}
    \bigr]
    \;\geq\;
    \frac{1}{K}\,
    \min_{k \in \{1, \ldots, K\}}
    \frac{%
      \capacitym_{N}(\boldsymbol{x}_{k}, \boldsymbol{M}_{\!N})
    }{\widetilde{Q}_{N}(\boldsymbol{x}_{k})}
    \overset{\eqref{lower_cap_final}}{\;\geq\;}
    N^{-(\ell_{3}+q)},
  \end{align}
  where we used that $K \leq |\mathcal{P}_{N}| \leq N^{q}$. Finally, set $k_{1} \ldef \beta \min_{i \neq j \in \mathcal{I}_{\beta}} (c(\boldsymbol{m}_{i}, \boldsymbol{m}_{j}) - \widetilde{F}_{\beta, q}(\boldsymbol{m}_{j}))$. By combining \eqref{eq:def:meta:ub} and \eqref{eq:def:meta:lb}, we obtain that for any $k \in (0, k_{1})$ there exists $N_{0} \in \mathbb{N}$ such that for any $N \geq N_{0}$
  \begin{align*}
    |\mathcal{I}_{\beta}|\,
    \frac{
      \max_{j \in \mathcal{I}_{\beta}}
      \widetilde{\Prob}^{N}_{\tilde{\mu}_{N} \rvert \mathcal{M}_{j, N}}\bigl[
        \tilde{\tau}^{N}_{\mathcal{M}_{N} \setminus \mathcal{M}_{j, N}} < \tilde{\tau}^{N}_{\mathcal{M}_{j, N}}
      \bigr]
    }{
      \min_{\mathcal{X} \subset \mathcal{S}_{N} \setminus \mathcal{M}_{N}}
      \widetilde{\Prob}^{N}_{\tilde{\mu}_{N} \rvert \mathcal{X}}\bigl[
        \tilde{\tau}^{N}_{\mathcal{M}_{N}} < \tilde{\tau}^{N}_{\mathcal{X}}
      \bigr]
    }
    \;\leq\;
    |\mathcal{I}_{\beta}|^{2}\, N^{\ell_{2} + \ell_{3} + q}\, \me^{-\beta k_{1} N}
    \;\leq\;
    \me^{-k N},
  \end{align*}
  which concludes the proof.
\end{proof}
\begin{remark}\label{rem:optimal:k1:CWP}
  Notice that, for any $j \in \mathcal{I}_{\beta}$,
  \begin{align*}
    \widetilde{\Prob}^{N}_{\tilde{\mu}_{N} \rvert \mathcal{M}_{j, N}}\bigl[
      \tilde{\tau}^{N}_{\mathcal{M}_{N} \setminus \mathcal{M}_{j, N}} < \tilde{\tau}^{N}_{\mathcal{M}_{j, N}}
    \bigr]
    \overset{\eqref{eq:monotonic}}{\;\geq\;}
    \max_{\substack{i, j \in \mathcal{I}_{\beta} \\ i \neq j}}
    \widetilde{\Prob}^{N}_{\tilde{\mu}_{N} \rvert \mathcal{M}_{j, N}}\bigl[
      \tilde{\tau}^{N}_{\mathcal{M}_{i, N} \setminus \mathcal{M}_{j, N}} < \tilde{\tau}^{N}_{\mathcal{M}_{j, N}}
    \bigr].
  \end{align*}
  Combining this estimate with \eqref{eq:escape_prob}, \eqref{mesocap} and \eqref{eq:upper_cap_prefinal}, and recalling the definition of $k_{1}$, it follows that for any $k > k_{1}$ there exists $N_{0}' \in \mathbb{N}$ such that for any $N \geq N_{0}'$,
  \begin{align}\label{eq:optimal:k1:CWP}
    |\mathcal{I}_{\beta}|\,
    \frac{
      \max_{j \in \mathcal{I}_{\beta}}
      \widetilde{\Prob}^{N}_{\tilde{\mu}_{N} \rvert \mathcal{M}_{j, N}}\bigl[
        \tilde{\tau}^{N}_{\mathcal{M}_{N} \setminus \mathcal{M}_{j, N}} < \tilde{\tau}^{N}_{\mathcal{M}_{j, N}}
      \bigr]
    }{
      \min_{\mathcal{X} \subset \mathcal{S}_{N} \setminus \mathcal{M}_{N}}
      \widetilde{\Prob}^{N}_{\tilde{\mu}_{N} \rvert \mathcal{X}}\bigl[
        \tilde{\tau}^{N}_{\mathcal{M}_{N}} < \tilde{\tau}^{N}_{\mathcal{X}}
      \bigr]
    }
    \;\geq\;
    \frac{|\mathcal{I}_{\beta}|}{N^{\ell_{1}}}\,
    \me^{-k_{1} N}
    \;\geq\;
    \me^{-k N},
  \end{align}
  which shows that, on the exponential scale, the constant $k_{1}$ is optimal.
\end{remark}

\section{Metastability for the DCWP model}\label{sec:meta-DCWP}
In this section we analyse the metastable behaviour of the DCWP model by comparison with the CWP model studied in Section~\ref{app}. In particular, our aim is to prove Theorem~\ref{thm:meta:dcwp}. Its proof depends on preliminary comparisons of several quantities of interest in the two models; these comparisons appear as Lemma~\ref{coro:betapmapprox} and Lemma~\ref{lemma:xi_comp} in Section~\ref{sec:prel}, while the proof of Theorem~\ref{thm:meta:dcwp} is presented in Section~\ref{sec:proofmeta_dcwp}.

\subsection{Preliminary comparison between the CWP and DCWP models}\label{sec:prel}
In order to simplify notation, define the following random variable
\begin{align}\label{eq:delta}
  \Delta_{N}(\sigma) \;\ldef\; H_{N}(\sigma) - \widetilde{H}_{N}(\sigma).
\end{align}
Further, let $\phi\colon \mathbb{R} \to [0, \infty]$ be the log-moment generating function of the centred random variable $J_{12}-1$, that is,
\begin{align}\label{eq:lmgf}
  \phi(t)
  \;=\;
  \ln \mean\bigl[\me^{t(J_{ij}-1)}\bigr].
\end{align}
The next lemma gives an expression of the annealed Gibbs density in terms of $\phi$. The strategy is inspired by the arguments given by the proof of \cite[Lemma 4.2]{BdHMPS24}.
\begin{lemma}\label{coro:betapmapprox}
  Let $\beta \geq 0$ and recall that $v = \mathbb{V}[J_{12}]$.
  \begin{enumerate}[(i)]
  \item
    For any $2 \leq N \in \mathbb{N}$, such that $\beta/N \in \mathscr{D}$, with $\mathscr{D}$ as in Assumption \ref{ass:law:J},
    \begin{align}
      \mean\Bigl[\me^{\pm\beta \Delta_{N}(\sigma)}\Bigr]
      \;=\;
      \me^{-N \phi(\pm\beta/N) \widetilde{H}_{N}(\sigma)}
      \qquad \forall\, \sigma \in \mathcal{S}_{N}.
    \end{align}
    
  \item
    For $N \to \infty$,
    \begin{align}\label{eq:mean_e_delta}
      \me^{\frac{\beta^2v}{4q}(1+o(1))}
      \;\leq\;
      \min_{\sigma \in \mathcal{S}_{N}} \mean\Bigl[ \me^{-\beta \Delta_{N}(\sigma)} \Bigr]
      \;\leq\;
      \max_{\sigma \in \mathcal{S}_{N}} \mean\Bigl[ \me^{-\beta \Delta_{N}(\sigma)} \Bigr]
      \;\leq\;
      \me^{\frac{\beta^2 v}{4}(1+o(1))}.
    \end{align}
    
  \item
    For $N \to \infty$, 
    \begin{align}\label{eq:here_to_pick_proof}
      \max_{\substack{\sigma, \eta \in \mathcal{S}_{N} \\ \sigma \sim \eta}}
      \frac{%
        \mean\bigl[\me^{\pm \beta (H_{N}(\sigma) \vee H_{N}(\eta))}\bigr]
      }{%
        \me^{\pm \beta (\widetilde{H}_{N}(\sigma) \vee \widetilde{H}_{N}(\eta))}
      }
      \;\leq\;
      \me^{\frac{\beta^2v}{4}}
      \bigl( 1 + o(1)\bigr). 
    \end{align}
  \end{enumerate}
\end{lemma}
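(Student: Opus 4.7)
The plan is to treat the three parts in order. For part (i), I would expand
\begin{align*}
  -\beta \Delta_N(\sigma) \;=\; \frac{\beta}{N}\sum_{i < j}(J_{ij} - 1)\indicator_{\{\sigma_i = \sigma_j\}},
\end{align*}
so that independence of $(J_{ij})$ makes $\mean[\me^{-\beta\Delta_N(\sigma)}]$ factorise across edges: each pair with $\sigma_i = \sigma_j$ contributes $\mean[\me^{(\beta/N)(J_{12} - 1)}] = \me^{\phi(\beta/N)}$, and the remaining pairs contribute $1$. Using $\#\{(i,j) : i < j,\, \sigma_i = \sigma_j\} = -N\widetilde{H}_N(\sigma)$ then gives the identity.

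For part (ii), I would Taylor-expand $\phi$ near the origin. Assumption~\ref{ass:law:J} yields $\phi(0) = \phi'(0) = 0$ and $\phi''(0) = v$, so $\phi(\beta/N) = v\beta^2/(2N^2)(1 + o(1))$ as $N \to \infty$. Combined with (i), it suffices to bound $M_\sigma \ldef -N\widetilde{H}_N(\sigma)$, the number of monochromatic pairs. By \eqref{eq:H_meso_form}, $M_\sigma = (N^2\|L_N(\sigma)\|_2^2 - N)/2$ with $\|L_N(\sigma)\|_2^2 \in [1/q, 1]$ (minimum at balanced, maximum at monochromatic configurations), so $M_\sigma \in [N^2/(2q) - N/2,\, N^2/2 - N/2]$. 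Multiplication by $\phi(\beta/N)$ then produces the claimed asymptotic range $[v\beta^2/(4q)(1+o(1)),\, v\beta^2/4(1+o(1))]$ in the exponent.

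Part (iii) is the subtle one. The $-\beta$ case is immediate from $\me^{-\beta(a \vee b)} = \me^{-\beta a} \wedge \me^{-\beta b} \leq \me^{-\beta a}$: taking without loss of generality $\widetilde{H}_N(\sigma) \geq \widetilde{H}_N(\eta)$ reduces the ratio to $\mean[\me^{-\beta H_N(\sigma)}]/\me^{-\beta\widetilde{H}_N(\sigma)} \leq \me^{v\beta^2/4(1+o(1))}$ by (ii). For the $+\beta$ case I would decompose around the unique coordinate $k$ where $\sigma$ and $\eta$ differ: write $H_N(\tau) = H_N^{(k)} + G_N^{(k)}(\tau)$ for $\tau \in \{\sigma, \eta\}$, with $H_N^{(k)}$ collecting the $J_{ij}\indicator_{\{\sigma_i = \sigma_j\}}/N$ over $i, j \neq k$ (common to both), $G_N^{(k)}(\sigma) = -N^{-1}\sum_{j \in A} J_{\{k,j\}}$ with $A = \{j \neq k : \sigma_j = \sigma_k\}$, and analogously $G_N^{(k)}(\eta)$ with $B = \{j \neq k : \sigma_j = \eta_k\}$. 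Since $\sigma_k \neq \eta_k$ forces $A \cap B = \emptyset$, the three pieces $H_N^{(k)}$, $G_N^{(k)}(\sigma)$, $G_N^{(k)}(\eta)$ are mutually independent, giving
\begin{align*}
  \mean\bigl[\me^{\beta(H_N(\sigma) \vee H_N(\eta))}\bigr]
  \;=\;
  \mean\bigl[\me^{\beta H_N^{(k)}}\bigr]
  \cdot
  \mean\bigl[\me^{\beta(G_N^{(k)}(\sigma) \vee G_N^{(k)}(\eta))}\bigr].
\end{align*}
The first factor, by (i)--(ii) applied to $H_N^{(k)}$, is at most $\me^{\beta\widetilde{H}_N^{(k)}}\me^{v\beta^2/4(1+o(1))}$, producing the main $\me^{v\beta^2/4}$ contribution. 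For the second factor, setting $f = \me^{\beta G_N^{(k)}(\sigma)}$, $g = \me^{\beta G_N^{(k)}(\eta)}$ and taking w.l.o.g.\ $\widetilde{G}_N^{(k)}(\sigma) \geq \widetilde{G}_N^{(k)}(\eta)$, I would use $\mean[f \vee g] \leq \mean[f] + \mean[(g-f)_+]$; since $\mean g - \mean f \leq 0$, one has $(g-f)_+ \leq ((g - \mean g) - (f - \mean f))_+$, and independence of $f,g$ together with Cauchy--Schwarz yield $\mean[(g - f)_+] \leq \sqrt{\var(f) + \var(g)}$. A direct MGF computation based on the expansion $\phi(-2\beta/N) - 2\phi(-\beta/N) = v\beta^2/N^2(1+o(1))$ bounds each variance by $\me^{2\beta\widetilde{G}_N^{(k)}(\sigma)} \cdot v\beta^2/N \cdot (1+o(1))$, producing an overhead of $1 + \sqrt{2v\beta^2/N}(1+o(1))$, which is within the claimed $1 + 2\sqrt{v\beta^2/N}(1+o(1))$.

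The main obstacle is the sharp bound for the second factor in (iii): the naive union bound $\mean[f \vee g] \leq \mean[f] + \mean[g]$ would cost a constant factor of $2$ that cannot be absorbed into an $O(1/\sqrt{N})$ correction. The crux is that $(g - f)_+$ is driven by fluctuations of order $1/\sqrt{N}$ rather than by the means: $G_N^{(k)}(\sigma)$ and $G_N^{(k)}(\eta)$ are each sums of at most $N-1$ independent centred terms of size $1/N$, so the standard deviation of their difference is of order $1/\sqrt{N}$, and it is this scale that must be carefully propagated through the variance estimate.
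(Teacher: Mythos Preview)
Your proposal is correct and follows essentially the same route as the paper. Parts (i) and (ii) are identical: factorise over monochromatic pairs via independence, count them as $-N\widetilde H_N(\sigma)$, then Taylor-expand $\phi$ and bound the count between $N(N-1)/(2q)$ and $N^2/2$. For part (iii), both you and the paper split off the common part $S^k = H_N^{(k)}$ not involving vertex $k$ and reduce to bounding $\mean[\me^{\beta D^k(\sigma)} \vee \me^{\beta D^k(\eta)}]$; the variance computation $\phi(-2\beta/N)-2\phi(-\beta/N)=v\beta^2/N^2(1+o(1))$ is the same.

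The one small difference is in how the maximum is controlled. The paper uses the elementary inequality
\[
  \mean[X \vee Y] \;\le\; (\mean X \vee \mean Y)\Bigl(1 + \tfrac{\sqrt{\var X}}{\mean X} + \tfrac{\sqrt{\var Y}}{\mean Y}\Bigr),
\]
valid for any positive $X,Y$, yielding the stated $2\sqrt{v\beta^2/N}$. You instead observe that $A\cap B=\emptyset$ makes $f=\me^{\beta G_N^{(k)}(\sigma)}$ and $g=\me^{\beta G_N^{(k)}(\eta)}$ \emph{independent}, and exploit this through $\mean[f\vee g]\le \mean f + \sqrt{\var f+\var g}$, giving the slightly sharper $\sqrt{2v\beta^2/N}$. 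Your route uses a bit more structure (the disjointness of the colour classes) to buy a better constant; the paper's route is more robust since it does not need $D^k(\sigma)$ and $D^k(\eta)$ independent. Both land safely inside the stated bound.
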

\begin{proof}
  Throughout the proof set $\widetilde{M}(\sigma) \ldef \sum_{0 \leq i < j \leq N} \indicator_{ \sigma_{i} = \sigma_{j}} = -N \widetilde{H}_{N}(\sigma)$ for any $\sigma \in \mathcal{S}_{N}$.

  \textit{(i)} Since the random variables $(J_{ij})_{1 \leq i < j \leq N}$ are assumed to be independent and identically distributed,
  \begin{align}
    \label{eq:mean_Delta_estimate}
    \mean\Bigl[ \me^{-\beta \Delta_{N}(\sigma)} \Bigr]
    \;=\;
    \mean\Bigl[ \me^{\frac{\beta(J_{ij}-1)}{N}} \Bigr]^{\widetilde{M}(\sigma)}
    \;=\;
    \me^{\widetilde{M}(\sigma)\phi(\beta/N)},
  \end{align}
  where $\phi$ is the log–moment generating function defined in \eqref{eq:lmgf}.
  
  \textit{(ii)} By a Taylor expansion, we have that $\phi(x) = v x^{2}/2 + o(x^{2})$ as $x \to 0$. Since, for any $N > q$, there are at most \(N^2/2\) unordered pairs and, by the pigeonhole principle, at least \(N(N-1)/(2q)\) pairs of equal spin, if follows that $N(N-1)/(2q) \leq \widetilde{M}(\sigma) \leq N^{2} / 2$ for any $\sigma \in \mathcal{S}_{N}$. Hence, the assertion follows from \eqref{eq:mean_Delta_estimate}.
  
  \textit{(iii)} Since the expectation of the minimum of two random variables is upper bounded by the minimum of the expectations, we have
  \begin{align}
    \mean\Bigl[
      \me^{-\beta (H_{N}(\sigma) \vee H_{N}(\eta))}
    \Bigr]
    \;\leq\;
    \mean\Bigl[ \me^{-\beta H_{N}(\sigma)}\Bigr]
    \wedge \mean\Bigl[\me^{-\beta H_{N}(\eta)}\Bigr].
  \end{align}
  Thus, in view of (ii), we obtain the desired upper bound \eqref{eq:here_to_pick_proof}.
  
  We now treat the the other sign. For this, since $\sigma$ and $\eta$ differ at most in one spin we can find $k \in \{1, \dots, N\}$, $\ell \in \{1, \dots, q\}$ such that $\eta = \sigma^{k,\ell}$, where
  \begin{align*}
    \sigma^{k, \ell}_{i}
    \;=\;
    \begin{cases}
      \sigma_i, &\quad k \neq i
      \\
      \ell, &\quad k = i.
    \end{cases}
  \end{align*}
  We further introduce the decomposition
  \begin{align*}
    S^{k}(\sigma)
    \;=\;
    \frac{-1}{N}\sum_{\substack{i < j \\ i, j \neq k}}^{N} J_{ij}\indicator_{\sigma_{i} = \sigma_{j}},
    \qquad
    D^{k}(\sigma)
    \;=\;
    \frac{-1}{N}\sum_{\substack{j \neq k}}^{N} J_{kj}\indicator_{\sigma_{k} = \sigma_{j}}.
  \end{align*}
  One can observe that $H_{N} = S^{k} + D^{k}$, $S^{k}(\sigma) = S^{k}(\sigma^{k, \ell})$, and that $S^{k}(\sigma)$ is independent of $D^{k}(\eta)$ for any $\sigma,\eta$. Then we have
  \begin{align*}
    \mean\Bigl[
      \me^{\beta (H_{N}(\sigma) \vee H_{N}(\eta))}
    \Bigr]
    &\;=\;
    \mean\Bigl[
      \me^{\beta H_{N}(\sigma)} \vee \me^{\beta H_N(\eta)}
    \Bigr]
    \;=\;
    \mean\Bigl[\me^{\beta S^{k}(\sigma)}\Bigr]\,
    \mean\Bigl[
      \me^{\beta D^{k}(\sigma)} \vee \me^{\beta D^{k}(\sigma^{k,\ell})}
    \Bigr].
  \end{align*}
  Also note that for any positive random variables $X, Y$,
  \begin{align}\label{bound:mean_max}
    \mean\bigl[ X \vee Y \bigr]
    \;\leq\;
    \Bigl(\mean[X] \vee \mean[Y] \Bigr)
    \biggl(
      1 +
      \biggl(
        \frac{\sqrt{\var[X]}}{\mean[X]}
        +
        \frac{\sqrt{\var[Y]}}{\mean[Y]}
      \biggr)
    \biggr).
  \end{align}
  This leads to the following estimate
  \begin{align*}
    \frac{
      \mean\bigl[
        \me^{\beta (H_N(\sigma) \vee H_N(\eta))}\bigr]
    }{
      \mean\bigl[\me^{\beta H_{N}(\sigma)}\bigr]
      \vee
      \mean\bigl[\me^{\beta H_{N}(\sigma^{k, \ell})}\bigr]
    }
    \;\leq\;
    1
    +
    \frac{
      \sqrt{\var\bigl[ \me^{\beta D^k(\sigma)} \bigr]}
    }{
      \mean\bigl[\me^{\beta D^k(\sigma)}\bigr]
    }
    +
    \frac{
      \sqrt{\var\bigl[ \me^{\beta D^k(\sigma^{k,\ell})} \bigr]}
    }{
      \mean\bigl[ \me^{\beta D^k(\sigma^{k,\ell})} \bigr]
    }
  \end{align*}
  In order to control the right hand side, we first observe that
  \begin{align*}
    \mean\Bigl[ (\me^{\beta D^k(\eta)})^2 \Bigr]
    \;=\;
    \me^{\frac{-2\beta M(\eta)}{N} + M(\eta)\phi\left(\frac{-2\beta}{N}\right)}
    \qquad \text{and} \qquad
    \mean\Bigl[ \me^{\beta D^k(\eta)} \Bigr]^2
    \;=\;
    \me^{\frac{-2\beta M(\eta)}{N} + 2 M(\eta) \phi\left(\frac{-\beta}{N}\right)},
  \end{align*}
  where $M(\eta) = \sum_{j \neq k}^{N} \indicator_{\eta_{k} = \eta_{j}}$ with $\eta \in \{ \sigma, \sigma^{k, \ell} \}$, and $\phi$ is defined in \eqref{eq:lmgf}. Therefore, using the convexity of $\phi$ and bounding $M$ by $N$, we have
  \begin{align*}
    \frac{\var\bigl[ \me^{\beta D^k(\eta)} \bigr]}{\mean\bigl[ \me^{ \beta D^k(\eta)} \bigr]^2}
    \;=\;
    \me^{M(\eta) \left(\phi\left(\frac{-2\beta}{N}\right) - 2\phi\left(\frac{-\beta}{N}\right)\right)} - 1
    \;\leq\;
    \frac{v\beta^2}{N}(1+o(1)).
  \end{align*}
  Since $H_{N}(\sigma) = \widetilde{H}_{N}(\sigma) + \Delta_{N}(\sigma)$, an application of \eqref{eq:mean_e_delta} yields the desired bound. 
\end{proof}
In the next lemma we obtain both estimates for $\Delta_N(\sigma)$ in the form of concentration inequalities and estimates for the probability of the following event
\begin{align}\label{bound:squaren}
  \Xi_{N}(a)
  \;=\;
  \biggl\{
    \max_{\sigma \in \mathcal{S}_{N}} \abs{\Delta_N(\sigma)} \leq a\sqrt{N}
  \biggr\}, \qquad a>0.
\end{align}
In particular, $ \Xi_{N}(a)^{c}$ turns out to be negligible in the limit as $N \to \infty$.
\begin{lemma}\label{lemma:xi_comp}
  \begin{enumerate}[(i)]
  \item
    Let $(t_{N})_{N \in \mathbb{N}} \subset [0, \infty)$ be a sequence such that $t_{N}/N \to 0$ as $N \to \infty$. Then, for any $\sigma \in \mathcal{S}_{N}$, 
    \begin{align}\label{eq:concdelta}
      \prob\bigl[ \abs{\Delta_{N}(\sigma)} \geq t_{N} \bigr]
      \;\leq\;
      2\exp\biggl( -\frac{t_{N}^{2}}{v} \bigl( 1 + o(1) \bigr) \biggr).
    \end{align}
    
  \item
    For any $a > 0$
    \begin{align}\label{eq:xi_comp_prob}
      \prob\bigl[ \Xi_{N}(a)^c \bigr]
      \;\leq\;
      2 \exp\biggl( N \ln q - \frac{a^2 N}{v} \bigl( 1+o(1) \bigr) \biggr).
    \end{align}
  \end{enumerate}
\end{lemma}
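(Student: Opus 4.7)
The plan is to prove (i) by a standard Chernoff argument that exploits the independence of the $J_{ij}$'s, using the explicit moment generating function formula already established in Lemma~\ref{coro:betapmapprox}(i). Part (ii) will then follow immediately from (i) by a union bound over the $q^N$ configurations in $\cS_N$.

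For part (i), note that $\Delta_N(\sigma) = -\tfrac{1}{N}\sum_{1\le i<j\le N}(J_{ij}-1)\indicator_{\{\sigma_i=\sigma_j\}}$ is a sum of independent centred random variables. For $\lambda>0$ small enough that $\lambda/N \in \mathscr{D}$ (which will be automatic for the optimizer given the assumption $t/N \to 0$), I would write
\begin{align*}
\mean\bigl[\me^{\lambda \Delta_N(\sigma)}\bigr]
\;=\;
\prod_{\substack{i<j \\ \sigma_i=\sigma_j}} \mean\bigl[\me^{-\tfrac{\lambda}{N}(J_{ij}-1)}\bigr]
\;=\;
\me^{M(\sigma)\,\phi(-\lambda/N)},
\end{align*}
with $M(\sigma)\le N^2/2$, using exactly the computation behind Lemma~\ref{coro:betapmapprox}(i). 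Applying Markov's inequality gives
\begin{align*}
\prob\bigl[\Delta_N(\sigma)\ge t\bigr]
\;\le\;
\exp\bigl(-\lambda t + M(\sigma)\phi(-\lambda/N)\bigr).
\end{align*}

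The key quantitative step is the Taylor expansion $\phi(x)=\tfrac{v}{2}x^2+o(x^2)$ as $x\to 0$, which is valid since $0$ is in the interior of $\mathscr{D}$ and $\phi(0)=\phi'(0)=0$, $\phi''(0)=v$. Substituting and using $M(\sigma)\le N^2/2$ yields the bound $-\lambda t + \tfrac{v\lambda^2}{4}(1+o(1))$ as $\lambda/N\to 0$. Optimizing over $\lambda$ at $\lambda^\ast = 2t/v$ produces the exponent $-t^2/v\,(1+o(1))$, and the choice $\lambda^\ast/N = 2t/(Nv) \to 0$ by the assumption on $t$ confirms that the expansion is applicable. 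The symmetric tail $\prob[-\Delta_N(\sigma)\ge t]$ is handled identically by taking $\lambda<0$ (or equivalently replacing $J_{ij}-1$ by its negative, which has the same variance). Combining the two tails via a union bound introduces the factor $2$ and gives \eqref{eq:concdelta}.

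For part (ii), a direct union bound yields
\begin{align*}
\prob\bigl[\Xi_N(a)^c\bigr]
\;\le\;
\sum_{\sigma\in\cS_N}\prob\bigl[|\Delta_N(\sigma)|\ge a\sqrt{N}\bigr]
\;\le\;
q^N\cdot 2\exp\bigl(-\tfrac{a^2 N}{v}(1+o(1))\bigr),
\end{align*}
where the inner probability uses \eqref{eq:concdelta} with $t=a\sqrt{N}$; note that $t/N = a/\sqrt{N}\to 0$ so the hypothesis of (i) is satisfied. Rewriting $q^N=\exp(N\ln q)$ gives \eqref{eq:xi_comp_prob}.

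The only subtle point is checking that the optimizer $\lambda^\ast/N$ stays inside $\mathscr{D}$ so that the MGF formula from Lemma~\ref{coro:betapmapprox}(i) applies, and that the $o(1)$ error in the Taylor expansion of $\phi$ is uniform in a neighbourhood of $0$; both are consequences of Assumption~\ref{ass:law:J}(ii) combined with the assumed scaling $t/N\to 0$. Neither presents a genuine obstacle, so the proof is essentially a clean application of the Chernoff method to the setting already prepared by Lemma~\ref{coro:betapmapprox}.
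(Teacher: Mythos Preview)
Your proof is correct and follows essentially the same route as the paper: a Chernoff bound using the MGF computation from Lemma~\ref{coro:betapmapprox}, the Taylor expansion $\phi(x)=\tfrac{v}{2}x^2+o(x^2)$, optimisation at $\lambda^\ast=2t/v$, and then a union bound over $\cS_N$ for part~(ii). If anything, your write-up is slightly more careful in explicitly checking that $\lambda^\ast/N\to 0$ so that the expansion and the MGF formula are legitimately applicable.
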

\begin{proof}
  \textit{(i)} First note that $\prob\bigl[\abs{\Delta_{N}(\sigma)} \geq t \bigr] = \prob\bigl[\Delta_{N}(\sigma) \geq t \bigr] + \prob\bigl[\Delta_{N}(\sigma) \leq - t\bigr]$, so by symmetry it suffices to bound the first term. Fix $\lambda \in \mathbb{R}$ and choose $N$ large enough that $-\lambda/N$ lies in the domain of $\phi$ defined in \eqref{eq:lmgf}. Then, an application of Markov's inequality and Lemma~\ref{coro:betapmapprox} give
  \begin{align*}
    \prob\bigl[ \Delta_{N}(\sigma) \geq t\bigr]
    \;\leq\;
    \frac{\mean\bigl[ \me^{\lambda \Delta_{N}(\sigma)} \bigr]}{\me^{\lambda t_{N}}} 
    \;\leq\;
    \me^{\frac{\lambda^2v}{4}(1+o(1)) -\lambda t_{N}}.
  \end{align*}
  Optimising the exponent by taking $\lambda = 2t_{N}/v (1+o(1))$ yields the claimed bound.
  
  \textit{(ii)} A union bound yields
  \begin{align*}
    \prob\biggl[
      \max_{\sigma \in \mathcal{S}_{N}}
      \abs{\Delta_{N}(\sigma)} > a\sqrt{N}
    \biggr]
    &\;\leq\; 
    \sum_{\sigma \in \mathcal{S}_{N}}
    \prob\Bigl[ \abs{\Delta_{N}(\sigma)} > a \sqrt{N} \Bigr]
    \;\leq\;
    2\, \me^{N \ln(q) - N a^{2}/v (1 + o(1))},
  \end{align*}
  where the final inequality uses \eqref{eq:concdelta} and $\abs{\mathcal{S}_{N}} = q^{N}$.    
\end{proof}

\subsection{Proof of Theorem~\ref{thm:meta:dcwp}} \label{sec:proofmeta_dcwp}
In the previous section we provided a comparison of the quenched Hamiltonian $H_N$ and the annealed Hamiltonian $\widetilde{H}_{N}$. In the following proof we will proceed by comparing the quadratic forms of the quenched and annealed models, introduced in Section \ref{subsec:pot_theo}. This proof follows along the lines of \cite[Theorem 2.10]{BdHMPS24} with minor modifications. However, we present it here for the convenience of the reader.

\begin{proof}[Proof of Theorem~\ref{thm:meta:dcwp}]
  First note that for any two adjacent configurations $\sigma, \eta \in \mathcal{S}_{N}$,
  \begin{align*}
    Z_{N} \mu_{N}(\sigma) \pi_{N}(\sigma, \eta)
    \;=\;
    \frac{1}{Nq}\me^{-\beta(H_N(\sigma)\vee H_N(\eta))}.
  \end{align*}
  Hence, the quadratic form in \eqref{def:D_form_E} can be written as
  \begin{align*}
    Z_{N} \mathcal{E}_{N}(f)
    \;=\;
    \frac{1}{2} \sum_{\substack{\sigma,\eta\in \mathcal S_{N} \\ \sigma \sim \eta}}
    \frac{1}{Nq}\, \me^{-\beta(H_{N}(\sigma) \vee H_{N}(\eta))}
    \bigl( f(\sigma)-f(\eta) \bigr)^2.
  \end{align*}
  Let $a > 0$ and recall the definition of $\Xi_{N}(a)$ given in \eqref{bound:squaren}. Then, on the event $\Xi_{N}(a)$,
  \begin{align}\label{eq:xi_control_C}
    \me^{-\beta a \sqrt{N}}\, \widetilde{Z}_{N} \widetilde{\mathcal{E}}_{N}(f)
    \;\leq\;
    Z_{N} \mathcal{E}_{N}(f)
    \;\leq\;
    \me^{\beta a \sqrt{N}}\, \widetilde{Z}_{N} \widetilde{\mathcal{E}}_{N}(f)
  \end{align}
  for any $f\colon \mathcal{S}_{N} \to \mathbb{R}$. Hence, the Dirichlet principle \eqref{eq:Dirichlet} implies that, for any non-empty, disjoint $\mathcal{X}, \mathcal{Y} \subset \mathcal{S}_{N}$,
  \begin{align}\label{eq:xi_control_cap}
    \me^{-\beta a \sqrt{N}}\,
    \widetilde{Z}_{N} \widetilde{\capacity}_{N}(\mathcal{X}, \mathcal{Y})
    \;\leq\;
    Z_{N} \capacity_{N}(\mathcal{X}, \mathcal{Y})
    \;\leq\;
    \me^{\beta a \sqrt{N}}\,
    \widetilde{Z}_{N} \widetilde{\capacity}_{N}(\mathcal{X}, \mathcal{Y}).
  \end{align}
  Likewise, on the event $\Xi_{N}(a)$, we have that, for any $\mathcal{X} \subset \mathcal{S}_{N}$,
  \begin{align}
    \me^{-\beta a \sqrt{N}}\,
    \widetilde{Z}_{N} \widetilde{\mu}_{N}[\mathcal{X}]
    \;\leq\;
    Z_{N} \mu_{N}[\mathcal{X}]
    \;\leq\;
    \me^{\beta a \sqrt{N}}\,
    \widetilde{Z}_{N} \widetilde{\mu}_{N}[\mathcal{X}].
  \end{align}
  Thus, in view of \eqref{eq:escape_prob}, we deduce that, on the event $\Xi_{N}(a)$,
  \begin{align}\label{eq:xi_control_prob}
    \me^{-2\beta a \sqrt{N}}
    \;\leq\;
    \frac{\Prob^{N}_{\mu_{N} \rvert \mathcal{X}}\bigl[ \tau^{N}_{\mathcal{X}} < \tau^{N}_{\mathcal{Y}}\bigr]}
    {
      \widetilde{\Prob}^{N}_{\widetilde{\mu}_{N} \rvert \mathcal{X}}\bigl[
        \widetilde{\tau}^{N}_{\mathcal{X}} < \widetilde{\tau}^{N}_{\mathcal{Y}}
      \bigr]
    }
    \;\leq\;
    \me^{2\beta a \sqrt{N}}.
  \end{align}
  Next, choose $a > \sqrt{v \ln q}$. Fix any $k < k_{1}$ (with $k_{1}$ as in Theorem~\ref{thm:meta:cwp}) and any $k' \in (k, k_{1})$. Taking $N$ sufficiently large so that $k' - 2a\beta/\sqrt{N} \geq k$,  Theorem~\ref{thm:meta:cwp} together with \eqref{eq:xi_control_prob} yield that, on the event $\Xi_{N}(a)$,
  \begin{align}
    \label{boundformeta}
    &|\mathcal{I}_{\beta}|\,
    \frac{
      \max_{i \in \mathcal{I}_{\beta}} \Prob^{N}_{\mu_{N} \rvert \mathcal{M}_{i, N}}\bigl[
        \tau^{N}_{\mathcal{M}_{N} \setminus \mathcal{M}_{i, N}} < \tau^{N}_{\mathcal{M}_{i, N}}
      \bigr]
    }{
      \min_{\mathcal{X} \subset \mathcal{S}_{N} \setminus \mathcal{M}_{N}} \Prob^{N}_{\mu_{N} \rvert \mathcal{X}}\bigl[
        \tau^{N}_{\mathcal{M}_{N}} < \tau^{N}_{\mathcal{X}}
      \bigr]
    }
    \nonumber\\[.5ex]
    &\mspace{36mu}\leq\;
    |\mathcal{I}_{\beta}|\,\me^{2\beta a \sqrt{N}}\,
    \frac{
      \max_{i \in \mathcal{I}_{\beta}}
      \widetilde{\Prob}^{N}_{\widetilde{\mu}_{N} \rvert \mathcal{M}_{i, N}}\bigl[
        {\tau}^{N}_{\mathcal{M}_{N} \setminus \mathcal{M}_{i, N}}
        < {\tau}^{N}_{cM_{i, N}}
      \bigr]
    }{
      \min_{\mathcal{X} \subset \mathcal{S}_{N} \setminus \mathcal{M}_{N}}
      \widetilde{\Prob}^{N}_{\widetilde{\mu}_{N} \rvert \mathcal{X}}\bigl[
        {\tau}^{N}_{\mathcal{M}_{N}} < {\tau}^{N}_{\mathcal{X}}
      \bigr]
    }
    \;\leq\;
    \me^{-k' N + 2\beta a \sqrt{N}}
    \;\leq\;
    \me^{-k N},
  \end{align}
  implying that $\Xi_{N}(a) \subset \Omega_{\textnormal{meta}}(N)$. On the other hand, due to the above choice of $a$, the right-hand side of \eqref{eq:xi_comp_prob} is summable in $N$. Therefore, an application of the Borell-Cantelli lemma finally yields
  \begin{align}
    \prob\biggl[ \limsup_{N \to \infty} \Omega_{\textnormal{meta}}(N)^c \biggr]
    \;\leq\;
    \prob\biggl[ \limsup_{N \to \infty} \Xi_{N}(a)^c \biggr]
    \;=\;
    0.
  \end{align}
\end{proof}
\begin{remark}\label{rem:tightk}
  Notice that, for every $k \geq k_{1}$, there exists a random variable $N_{1}$ such that, $\prob$-almost surely, $N_{1} < \infty$ and for all $N \geq N_{1}$
  \begin{align}\label{eq:optimal:k1:DCWP}
    |\mathcal{I}_{\beta}|\,
    \frac{
      \max_{i \in \mathcal{I}_{\beta}} \Prob^{N}_{\mu_{N} \rvert \mathcal{M}_{i, N}}\bigl[
        \tau^{N}_{\mathcal{M}_{N} \setminus \mathcal{M}_{i, N}} < \tau^{N}_{\mathcal{M}_{i, N}}
      \bigr]
    }{
      \min_{\mathcal{X} \subset \mathcal{S}_{N} \setminus \mathcal{M}_{N}} \Prob^{N}_{\mu_{N} \rvert \mathcal{X}}\bigl[
        \tau^{N}_{\mathcal{M}_{N}} < \tau^{N}_{\mathcal{X}}
      \bigr]
    }
    \;\geq\;
    \me^{-k N}.
  \end{align}
  Indeed, since for any fixed $a > \sqrt{v \ln q}$ the right-hand side of \eqref{eq:xi_comp_prob} is summable in $N$, the Borell-Cantelli lemma implies that there exists a random variable $\tilde{N}_{1}$ such that, $\prob$-almost surely, $\tilde{N}_{1} < \infty$ and for every $N \geq \tilde{N}_{1}$
  \begin{align*}
    \max_{i \in \mathcal{I}_{\beta}}
    \Prob^{N}_{\mu_{N} \rvert \mathcal{M}_{i, N}}\bigl[
      \tau^{N}_{\mathcal{M}_{N} \setminus \mathcal{M}_{i, N}} < \tau^{N}_{\mathcal{M}_{i, N}}
    \bigr]
    &\;\geq\;
    \me^{-2 \beta a \sqrt{N}}\,
    \max_{i \in \mathcal{I}_{\beta}}
    \widetilde{\Prob}^{N}_{\tilde{\mu}_{N} \rvert \mathcal{M}_{i, N}}\bigl[
      \tilde{\tau}^{N}_{\mathcal{M}_{N} \setminus \mathcal{M}_{i, N}}
      < \tilde{\tau}^{N}_{\mathcal{M}_{i, N}}
    \bigr].
  \end{align*}
  On the other hand, by combining again \eqref{eq:escape_prob}, \eqref{mesocap} and \eqref{eq:upper_cap_prefinal}, and recalling the definition of $k_{1}$, it follows that for any $k > k_{1}$ there exists $N_{0}' \in \mathbb{N}$ such that for any $N \geq N_{0}'$
  \begin{align*}
    \me^{-2 \beta a \sqrt{N}}\,
    \max_{i \in \mathcal{I}_{\beta}}
    \widetilde{\Prob}^{N}_{\tilde{\mu}_{N} \rvert \mathcal{M}_{i, N}}\bigl[
      \tilde{\tau}^{N}_{\mathcal{M}_{N} \setminus \mathcal{M}_{i, N}}
      < \tilde{\tau}^{N}_{\mathcal{M}_{i, N}}
    \bigr]
    \;\geq\;
    \frac{1}{N^{\ell_{1}}}\,
    \me^{-N k_{1} -2a\sqrt{N}\beta}
    \;\geq\;
    \me^{-N k}.
  \end{align*}
  Thus, by setting $N_{1} \ldef \widetilde{N}_{1} \vee N_{0}'$, the claim \eqref{eq:optimal:k1:DCWP} follows.
\end{remark}

\section{Capacity estimates for the DCWP model}\label{section:cap_mu}
In this section, we derive bounds for the capacity of the quenched model in relation to that of the annealed model. These estimates are regarded as general, as they do not require any assumptions regarding metastability and are applicable to arbitrary subsets of the configuration space. To establish bounds on the capacity, we adopt the same strategy as developed in \cite{BdHMPS24}. However, the adaptation is not straightforward due to the particularities of the DCWP model and the selection of (potentially) unbounded random variables.

\subsection{Concentration inequalities}\label{app:concent}
We begin with establishing a concentration inequality for functionals of independent random variables. By employing Chernoff-type bounds, this approach generalise McDiarmid's concentration inequality by replacing the bounded-difference assumption with a Lipschitz condition together with mild regularity on the law of the involved random variables. Furthermore, this method yields tighter estimates in the case of vanishing variance, that is, as $v \to 0$, in comparison to \cite[Proposition 2.1]{BMP21}.
%
\begin{theorem}\label{theo:general_conc}
  Let $(\Omega, \mathcal{F}, \prob)$ be a probability space and $n \in \mathbb{N}$. Consider a vector $X = (X_{1}, \dots, X_{n})$ of independent, $\mathbb{R}$-valued random variables on $(\Omega, \mathcal{F}, \prob)$ such that, for any $i \in \{1, \ldots, n\}$, the symmetrised cumulant generating function
  \begin{align}\label{def:varphi}
    \varphi_{i}(\lambda)
    \;\ldef\;
    \ln\mean\bigl[ \me^{\lambda X_{i}} \bigr]
    + \ln\mean\bigl[ \me^{-\lambda X_{i}} \bigr]
  \end{align}
  have domains $\mathcal{D}_i$ containing an open neighbourhood of 0. Further, let $f\colon \mathbb{R}^{n} \to \mathbb{R}$ be a measurable function and suppose that there exists $c_{1}, \ldots, c_{n} \in [0, \infty)$ such that
  \begin{align}\label{cond:lips}
    \bigl| f(x) - f(y) \bigr|
    \;\leq\;
    \sum_{i=1}^{n} c_{i}\, \bigl|x_{i} - y_{i}\bigr|.
  \end{align}
  Then, for $\lambda \in \bigcap_{i=1}^{n} c_{i}^{-1}\mathcal{D}_{i} \cap [0, \infty)$ and $t > 0$,
  \begin{align}
    \label{eq:concentration:ineq}
    \prob\bigl[f(X) - \mean\bigl[f(X)\bigr] > t \bigr]
    \;\leq\;
    \exp\biggl(
      -\lambda t + \sum_{i=1}^{n} \varphi_{i}(\lambda c_{i})
    \biggr).
  \end{align}
  If, additionally, the random variables $X_{1}, \ldots, X_{n}$ are identically distributioned then, for any $t > 0$,
  \begin{align}
    \prob\bigl[f(X) - \mean\bigl[f(X)\bigr] > t \bigr]
    \;\leq\;
    \exp\bigl(
      -n \varphi_{1}^{\ast}\bigl(t/(C n)\bigr)
    \bigr),
  \end{align}
  where $C \ldef \max\{c_{1}, \ldots, c_{n}\}$ and $\varphi_{1}^{\ast}$ denotes the Legendre tranform of $\varphi$.
\end{theorem}
\begin{proof}
  The proof is based on the method of martingale differences. For this purpose, set $\mathcal{F}_{0} \ldef \{\emptyset, \Omega\}$ and, for \(i\in\{1,\dots,n\}\), let $\mathcal{F}_{i} \ldef \sigma(X_{j} : j \leq i)$ be the $\sigma$-algebra generated by the random variables $X_{1}, \ldots, X_{i}$. Further, define $Z_{i} \ldef \mean\bigl[f(X) \mid \mathcal{F}_{i}\bigr]$ for any $i \in \{0, \ldots, n\}$. Then, the stochastic process $(Z_{i})_{i \in \{0, \ldots, n\}}$ is a martingale, and
  \begin{align}
    f(X) - \mean[F(X)] \;=\; \sum_{i=1}^{n} (Z_{i} - Z_{i-1}).
  \end{align}
  For any $i \in \{1, \ldots, n\}$ define the random vector $X^{(i)}$ by $X_{j}^{(i)} \ldef X_{j}$ for any $j \in \{1, \ldots, n\}$ with $j \neq i$ and $X_{i}^{(i)} \ldef X_{i}'$, where $X_{i}'$ is an independent copy of $X_{i}$. Using the independence of the random variables $X_{1}, \ldots, X_{n}$, we obtain that, $\prob$-a.s.,
  \begin{align*}
    \mean\bigl[ f(X^{(i)}) \mid \mathcal{F}_{i} \bigr]
    =
    \int_{\mathbb{R}^{n-i+1}} f(X_{1}, \ldots, X_{i-1}, y_{i}, \ldots, y_{n})\, \prod_{j=i}^{n} \mathrm{d}\bigl(\prob \circ X_{j}^{-1}\bigr)(y_{j})
    =
    \mean\bigl[ f(X) \mid \mathcal{F}_{i-1} \bigr]
  \end{align*}
  for any $i \in \{1, \ldots, n\}$. In particular, for any $\lambda \in \bigcap_{j=1}^{n} c_{j}^{-1}\mathcal{D}_{j} \cap [0, \infty)$ and $i \in \{1, \ldots, n\}$, we have that $\exp(\lambda (Z_{i} - Z_{i-1})) \in L^{1}(\prob)$ and, $\prob$-a.s.,
  \begin{align}
    \label{eq:martingale:difference}
    \mean\bigl[ \exp\bigl(\lambda (Z_{i} - Z_{i-1}) \bigr) \mid \mathcal{F}_{i-1} \bigr]
    &\;=\;
    \mean\bigl[
      \exp\bigl(\lambda \mean\bigl[ f(X) - f(X^{(i)}) \mid \mathcal{F}_{i} \bigr] \bigr)
      \mid \mathcal{F}_{i-1}
    \bigr]
    \nonumber\\[.5ex]
    &\;\leq\;
    \mean\bigl[ \exp\bigl(\lambda (f(X) - f(X^{(i)})\bigr) \mid \mathcal{F}_{i-1} \bigr]
    \nonumber\\[.5ex]
    &\;=\;
    \mean\bigl[ \cosh\bigl(\lambda (f(X) - f(X^{(i)})\bigr) \mid \mathcal{F}_{i-1} \bigr],
  \end{align}
  where in the final equality we used that $\prob \circ (X_{i}, X_{i+1}, \ldots, X_{n})^{-1} = \prob \circ (X_{i}', X_{i+1}, \ldots, X_{n})^{-1}$. Since $\cosh(t) \leq \cosh(s)$ for any $s > |t|$, if follows from \eqref{cond:lips} that, $\prob$-a.s.,
  \begin{align}
    \label{eq:mean:cosh}
    \mean\bigl[ \cosh\bigl(\lambda (f(X) - f(X^{(i)})\bigr) \mid \mathcal{F}_{i-1} \bigr]
    &\;\leq\;
    \mean\bigl[ \cosh\bigl(\lambda c_{i} |X_{i} - X_{i}'|\bigr) \mid \mathcal{F}_{i-1} \bigr]
    \nonumber\\[.5ex]
    &\;=\;
    \mean\bigl[ \cosh\bigl(\lambda c_{i} (X_{i} - X_{i}')\bigr) \bigr]
    \nonumber\\[.5ex]
    &\;=\;
    \mean\bigl[\exp(\lambda c_{i} X_{i})\bigr]\mean\bigl[\exp(-\lambda c_{i} X_{i})\bigr]
    \;=\;
    \exp\bigl( \varphi_{i}(\lambda c_{i})\bigr).
  \end{align}
  Thus, by combining the estimates \eqref{eq:martingale:difference} and \eqref{eq:mean:cosh}, it follows that, for any $i \in \{1, \ldots, n\}$,
  \begin{align}
    \label{eq:exponential:martingale:difference}
    \mean\bigl[
      \exp\bigl(\lambda (Z_{i} - Z_{i-1}) \bigr) \mid \mathcal{F}_{i-1}
    \bigr]
    \;=\;
    \exp\bigl( \varphi_{i}(\lambda c_{i})\bigr)
    \qquad \prob\text{-a.s.}
  \end{align}
  In particular, by applying \eqref{eq:exponential:martingale:difference} iteratively, we get
  \begin{align*}
    \mean\Biggl[
      \prod_{i=1}^{n} \me^{t \lambda (Z_{i} - Z_{i-1})}
    \Biggr]
    \;=\;
    \mean\Biggl[
      \prod_{i=1}^{n-1} \me^{t \lambda (Z_{i} - Z_{i-1})}
      \mean\Bigl[ \me^{t \lambda (Z_{n} - Z_{n-1})} \Big| \mathcal{F}_{n-1} \Bigr]
    \Biggr]
    \overset{\eqref{eq:exponential:martingale:difference}}{\;=\;}
    \prod_{i=1}^{n}
    \me^{\varphi_{i}(\lambda c_{i})}.
  \end{align*}
  Thus, by applying the exponential Markov inequality, we obtain for any $t > 0$
  \begin{align*}
    \prob\bigl[ f(X) - \mean[f(X)] > t \bigr]
    &\;=\;
    \prob\Biggl[ \sum_{i=1}^{n} (Z_{i} - Z_{i-1} ) > t \Biggr]
    \nonumber\\[.5ex]
    &\;\leq\;
    \me^{-t \lambda} \mean\Biggl[ \prod_{i=1}^{n} \me^{t \lambda (Z_{i} - Z_{i-1})} \Biggr]
    \;=\;
    \exp\biggl(-t \lambda + \sum_{i=1}^{n} \varphi_{i}(\lambda c_{i})\biggr),
  \end{align*}
  which conclude the proof of \eqref{eq:concentration:ineq}.
  
  If, additionally, the random variables $X_{1}, \ldots, X_{n}$ are identically distributed, setting $C \ldef \max\{c_{1}, \dots, c_{n}\}$, the right-hand side of \eqref{eq:concentration:ineq} simplifies to
  \begin{align*}
    \prob\bigl[ f(X) - \mean[f(X)] > t \bigr]
    \;\leq\;
    \exp\bigl( -\lambda t + n \varphi_{1}(\lambda C) \bigr).
  \end{align*}
  Since $\varphi_{1}$ is an even function and $t > 0$, 
  \begin{align*}
    \sup_{\lambda \leq 0} \bigl(t \lambda - \varphi_{1}(\lambda) \bigr)
    \;=\;
    \sup_{\lambda \geq 0} \bigl(-t \lambda - \varphi_{1}(-\lambda) \bigr)
    \;\leq\;
    \sup_{\lambda \geq 0} \bigl(t \lambda - \varphi_{1}(\lambda) \bigr),
  \end{align*}
  the assertion follows by optimising over all $\lambda > 0$.
\end{proof}
\begin{corollary}\label{lemma:general_conc_approx}
  Let $X = (X_{1}, \dots, X_{n})$ be i.i.d.\ random variables taking values in $\mathbb{R}$ with cumulant generating function defined in an open interval containing $0$. Let $f\colon \mathbb{R}^{n} \to \mathbb{R} $ satisfy the Lipschitz condition \eqref{cond:lips} with $c_{i} = c$ for all $i \in \{1, \dots, n\}$. Then, for any sequence $(t_{n})_{n \in \mathbb{N}} \subset [0, \infty)$ such that $t_{n} / n \to 0$ as $n \to \infty$,
  %
  \begin{align}
    \prob\bigl[ f(X) - \mean[f(X)] > t_{n} \bigr] 
    \;\leq\; 
    \exp\biggl( -\frac{t_{n}^{2}}{4 \var[X_{1}]\, c^{2} n} (1 + o(1)) \biggr).
  \end{align}
\end{corollary}
\begin{proof}
  First observe that $\varphi_{1}$ as defined in \eqref{def:varphi} is the cumulant-generating function of $X_{1}$ minus an independent copy of itself. Consequently, its Legendre transform inherits some useful properties. As $\varphi_{1}$ is smooth, strictly convex and satisfies $\varphi_{1}(0) = \varphi_{1}'(0) = 0$ and $\varphi_{1}''(0) = 2\var[X_{1}]$ it follows that $\varphi_{1}^{\ast}(0) = {\varphi_{1}^{\ast}}'(0) = 0$ and ${\varphi_{1}^{\ast}}''(0) = 1/(2 \var[X_{1}])$. Hence, for small $t > 0$,
  \begin{align}
    \varphi_{1}^{\ast}(t)
    \;=\;
    \frac{t^2}{4 \var[X_{1}]} + o(t^{2}),
  \end{align}
  and Theorem~\ref{theo:general_conc} then yields the claimed bound.
\end{proof}

\subsection{Concentration of the capacity}\label{sec:conccap}
\begin{lemma}\label{lemma:conclnzcap}
  Let $\beta > 0$ and $(t_{N})_{N \in \mathbb{N}} \subset [0, \infty)$ be a sequence such that $t_{N}/N \to 0$ as $N \to \infty$. Then, for any two non-empty, disjoint subsets $\mathcal{A}, \mathcal{B} \subset \mathcal{S}_{N}$,
  \begin{align}\label{eq:conccap}
    \prob\Bigl[
      \Bigl|
        \ln\bigl( Z_{N} \capacity_{N}(\mathcal{A}, \mathcal{B}) \bigr)
        - \mean\bigl[ \ln\bigl( Z_{N} \capacity_{N}(\mathcal{A},\mathcal{B}) \bigr) \bigr]
      \Bigr| > t_{N}
    \Bigr]
    \;\leq\;
    2\exp\biggl( -\frac{t_{N}^{2}}{2\beta^{2} v} (1+o(1)) \biggr).
  \end{align} 
\end{lemma}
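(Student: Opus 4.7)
The plan is to view $g(J) \ldef \ln\bigl(Z_N \capacity_N(\cA,\cB)\bigr)$ as a deterministic function of the $\binom{N}{2}$ independent random variables $\{J_{ij}\}_{1 \leq i<j\leq N}$ and apply the concentration bound of Corollary~\ref{lemma:general_conc_approx}. The only non-routine content is obtaining a sharp coordinate-wise Lipschitz constant for $g$, after which the result follows by plugging constants.

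First, I will derive the Lipschitz estimate via Dirichlet's principle~\eqref{eq:Dirichlet}. Because $\partial H_N(\sigma)/\partial J_{ij} = -\indicator_{\{\sigma_i = \sigma_j\}}/N$ has absolute value at most $1/N$ uniformly in $\sigma$, a perturbation of the single coordinate $J_{ij}$ by $\delta$ changes every $H_N(\sigma)$ by at most $|\delta|/N$, hence changes $H_N(\sigma)\vee H_N(\eta)$ by at most $|\delta|/N$ as well. Consequently every weight $\me^{-\beta(H_N(\sigma)\vee H_N(\eta))}$ appearing in $Z_N\cE_N(f)$ is multiplied by a factor in $[\me^{-\beta|\delta|/N},\me^{\beta|\delta|/N}]$. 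Taking the infimum over $\cH^N_{\cA,\cB}$ preserves this two-sided multiplicative control, so passing to the logarithm yields
\begin{align*}
\bigl|g(J') - g(J)\bigr|
\;\leq\;
\frac{\beta}{N}\sum_{1 \leq i<j\leq N}\bigl|J'_{ij}-J_{ij}\bigr|,
\end{align*}
i.e.\ the hypothesis~\eqref{cond:lips} is met with the uniform constant $c = \beta/N$ across all $n=\binom{N}{2}$ coordinates.

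Second, I will apply Corollary~\ref{lemma:general_conc_approx} to the i.i.d.\ family $\{J_{ij}\}$ (whose common law has a cumulant generating function defined near $0$ by Assumption~\ref{ass:law:J}(ii)), with Lipschitz constant $c = \beta/N$ and $n = \binom{N}{2}$ variables. The assumption $t_N/N\to 0$ clearly implies $t_N/n \to 0$, which is what the Corollary requires. Noting that
\begin{align*}
4 v c^{2} n
\;=\;
\frac{4v\beta^{2}}{N^{2}}\binom{N}{2}
\;=\;
2v\beta^{2}\bigl(1 - 1/N\bigr)
\;=\;
2v\beta^{2}\bigl(1+o(1)\bigr),
\end{align*}
the one-sided exponent output by the Corollary simplifies to exactly $-t_N^2/(2v\beta^2)(1+o(1))$. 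Running the same argument on $-g$ (which has the same Lipschitz constant) and a union bound produce the two-sided version with the factor~$2$ in~\eqref{eq:conccap}.

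The main obstacle here is the Lipschitz step: transferring the pointwise control of the Hamiltonian to control of $\ln(Z_N\capacity_N)$ is what requires the variational representation, and it is precisely this that makes the Lipschitz bound dimension-free, i.e.\ uniform in the choice of the sets $\cA,\cB$. Once that observation is in place, the concentration machinery developed in Section~\ref{app:concent} does the rest.
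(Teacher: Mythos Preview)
Your proof is correct and follows essentially the same approach as the paper: both establish the coordinate-wise Lipschitz bound $c=\beta/N$ by transferring the pointwise control of $H_N$ to $Z_N\cE_N(f)$ and then to the capacity via the Dirichlet principle, and then apply Corollary~\ref{lemma:general_conc_approx} with $n=\binom{N}{2}$ variables. The only cosmetic difference is that the paper argues the infimum step by fixing the minimizer $h$ for one realisation of $J$ and comparing, whereas you state directly that the two-sided multiplicative control is preserved under infima; both are valid.
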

\begin{proof}
  To emphasise the dependence on the random array $J = (J_{ij})_{1 \leq j < j \leq N}$, we henceforth write $H_N^J$, $Z^J_N$, $\mathcal{E}^J_N$, $\capacity^J_N(\mathcal{A},\mathcal{B})$. Further, let $J' = (J_{ij}')_{1 \leq i < j \leq N}$ be the triangular array of random variables which coincides with $J$ except that $J_{kl}'$ is an independent copy of $J_{kl}$ for some fixed pair $(k,l)$. To apply of Lemma~\ref{lemma:general_conc_approx} we first show that, for every $N$, the map $J \mapsto F_{N}(J) \ldef \ln(Z_{N}^{J} \capacity_{N}^{J}(\mathcal{A}, \mathcal{B}))$ satisfies the bounded difference condition \eqref{cond:lips}. By linearity,
  \begin{align}\label{eq:Lipschitz:H}
    \bigl| H_{N}^{J}(\sigma) - H_{N}^{J'}(\sigma) \bigr|
    \;\leq\;
    \frac{|J_{kl} - J'_{kl}|}{N}.
  \end{align}
  In particular, for every $\sigma,\eta \in \mathcal{S}_{N}$
  \begin{align*}
    H_{N}^{J'}(\sigma) \vee H_{N}^{J'}(\eta)
    - \frac{|J_{kl} - J'_{kl}|}{N}
    \;\leq\;
    H_{N}^{J}(\sigma) \vee H_{N}^{J}(\eta)
    \;\leq\;
    H_{N}^{J'}(\sigma) \vee H_{N}^{J'}(\eta)
    + \frac{|J_{kl} - J'_{kl}|}{N}.
  \end{align*}
  Hence, for any test function $h\colon \mathcal{S}_{N} \to \mathbb{R}$, it follows that
  \begin{align*}
    \ln\bigl( Z_{N}^{J'} \mathcal{E}_{N}^{J'}(h) \bigr)
    - \frac{\beta}{N} |J_{kl} - J_{kl}'|
    \;\leq\;
    \ln\bigl( Z_{N}^{J} \mathcal{E}_{N}^{J}(h) \bigr)
    \;\leq\;
    \ln\bigl( Z_{N}^{J'} \mathcal{E}_{N}^{J'}(h) \bigr)
    + \frac{\beta}{N} |J_{kl} - J_{kl}'|.
  \end{align*}
  In view of the Dirichlet principle \eqref{eq:Dirichlet} we further obtain that
  \begin{align*}
    \ln\bigl(Z_{N}^{J}\capacity_{N}^{J}(\mathcal{A}, \mathcal{B})\bigr)
    &\;\leq\;
    \ln\bigl(Z_{N}^{J}\mathcal{E}_{N}^{J}(h_{\mathcal{A}, \mathcal{B}}^{J'})\bigr)
    \;\leq\;
    \ln\bigl(Z_{N}^{J'}\capacity_{N}^{J'}(\mathcal{A}, \mathcal{B})\bigr)
    + \frac{\beta}{N} |J_{kl} - J_{kl}'|.
  \end{align*}
  By repeating the computation above with $J$ and $J'$ interchanged, we obtain
  \begin{align}
    \label{eq:interpolate:cap:kl}
    \bigl|
      \ln\bigl(Z_{N}^{J}\capacity_{N}^{J}(\mathcal{A}, \mathcal{B})\bigr)
      -
      \ln\bigl(Z_{N}^{J'}\capacity_{N}^{J'}(\mathcal{A}, \mathcal{B})\bigr)
    \bigr|
    \;\leq\;
    \frac{\beta}{N} |J_{kl} - J_{kl}'|
  \end{align}
  Finally, to verify that $F_{N}$ satisfies the condition \eqref{cond:lips}, let $J = (J_{ij})_{1 \leq i < j \leq N}$ and $J' = (J_{ij}')_{1 \leq i < j \leq N}$ be two independent triangular arrays. By interpolating between $J$ and $J'$ coordinatewise and using \eqref{eq:interpolate:cap:kl} yields that \eqref{cond:lips} holds with $n = N(N-1)/2$ and $c_{i} = \beta / N$ for every $i \in \{1, \ldots, N(N-1)/2\}$.
\end{proof}

\subsection{Annealed capacity estimates}\label{sec:ancap}
We proceed with annealed estimates that relate the expectations of the main quantities in the DCWP model to their counterparts in the CWP model.
\begin{proposition}\label{theo:meanzcap}
  Let $\beta > 0$ and, for each $N \in \mathbb{N}$, let $\mathcal{A}$ and $\mathcal{B}$ be two non-empty, disjoint subsets of $\mathcal{S}_{N}$.
  \begin{enumerate}[(i)]
  \item 
    Then, as $N \to \infty$,
    \begin{align}\label{eq:meanzcap1}
      \Bigl|
        \mean\bigl[
          \ln\bigl( Z_{N} \capacity_{N}(\mathcal{A}, \mathcal{B}) \bigr)
        \bigr]
        -
        \ln\bigl(
          \widetilde{Z}_{N} \widetilde{\capacity}_{N}(\mathcal{A}, \mathcal{B})
        \bigr)
      \Bigr|
      \;\leq\;
      \frac{\beta^{2}v}{4}\, \bigl(1 + o(1)\bigr).
    \end{align}
    
  \item
    Then, for any $k \geq 1$ as $N \to \infty$,
    \begin{align}\label{eq:meanzcap2}
      \me^{-\beta^{2}v/4 (1+o(1))}
      \;\leq\;
      \frac{
        \mean\Bigl[
          \bigl( Z_{N} \capacity_{N}(\mathcal{A}, \mathcal{B}) \bigr)^{\pm k}
        \Bigr]^{1/k}
      }{
        \bigl(
          \widetilde{Z}_{N} \widetilde{\capacity}_{N}(\mathcal{A}, \mathcal{B})
        \bigr)^{\pm 1}
      }
      \;\leq\;
      \me^{k\beta^{2}v/4 (1+o(1))}.
    \end{align}
  \end{enumerate}
\end{proposition}
\begin{proof}
  Recall from Section~\ref{subsec:pot_theo} the both definition of the Dirichlet form $\mathcal{E}_{N}(f)$ for functions $f \in \mathcal{H}_{\mathcal{A}, \mathcal{B}}^N$ and the quadratic form $\mathcal{D}_{N}(\varphi)$ for unit flows $\varphi \in \mathcal{U}_{\mathcal{A}, \mathcal{B}}^N$. In view of Lemma~\ref{coro:betapmapprox}(ii) we have that
  \begin{align}
    \label{eq:EB:forms:estimate}
    \mean\bigl[ Z_{N} \mathcal{E}_{N}(f) \bigr]
    &\;\leq\;
    \widetilde{Z}_{N} \widetilde{\mathcal{E}}_{N}(f)\, 
    \me^{\beta^{2} v/4} \bigl(1+o(1)\bigr)
    \mspace{72mu} \forall\, f \in \mathcal{H}_{\mathcal{A}, \mathcal{B}}^N,
    \\
    \mean\bigl[Z_{N}^{-1} \mathcal{D}_{N}(\varphi)\bigr]
    &\;\leq\;
    \widetilde{Z}_{N}^{-1} \widetilde{\mathcal{D}}_{N}(\varphi)\, 
    \me^{\beta^{2} v/4} \bigl(1+o(1)\bigr)
    \mspace{59mu} \forall\, \varphi \in \mathcal{U}_{\mathcal{A}, \mathcal{B}}^N.
  \end{align}
  The remaining part of the proof literally follows from \cite[Proposition 4.3]{BdHMPS24}.
\end{proof}

\section{Estimates of the harmonic sum}\label{sec:final}
In this section we control the numerator in Equation~\eqref{eq:golden_formula}, namely the $\ell_{1}(\mu)$-norm of the harmonic function, also called \emph{harmonic sum}. This requires first a preliminary estimate obtained in Proposition~\ref{prop:harm_estimate}, which significantly simplifies the harmonic sum and is used in Lemma~\ref{lemma:mean_harm_estimate} and Propositions~\ref{lemma:concharm_sum} and~\ref{prop:mean_ln_hsum}.

The main differences between the proof of Proposition~\ref{prop:harm_estimate} and that of \cite[Proposition 5.4]{BdHMPS24} are the treatment of multiple regimes and the removal of the non-degeneracy assumption \cite[Equation~(2.22)]{BdHMPS24}. That assumption fails in some regimes and is unnecessary here, since the measure of the metastable valleys is controlled by the explicit estimates of Lemma~\ref{lemma:partition_weight}. Although the proof of Proposition~\ref{prop:harm_estimate} proceeds by a regime-wise analysis, the remaining results in this section are independent on the inner structure of the metastable sets.

\subsection{Metastable partition and preliminary estimates}
We begin with the following definition, needed to state Proposition~\ref{prop:harm_estimate}. It is convenient to partition the state space into the neighbouring valleys associated with each relevant metastable set with respect to the free energy landscape $\widetilde{F}_{\beta, q}$, leaving only a remainder of negligible weight.
\begin{definition}[Metastable partition]\label{def:meta_valley}
  For $\beta > \beta_{1}(q)$ let $\{\mathcal{M}_{i,N} : i \in \mathcal{I}_{\beta}\}$ be the metastable sets as defined in \eqref{eq:metaset} and $\bigcup_{i \in \mathcal{I}_{\beta}} \mathcal{M}_{i, N}$. The metastable sets $\{\mathcal{M}_{i, N} : i \in \mathcal{I}_{\beta}\}$ give rise to a \emph{metastable partition} $\{\mathcal{S}_{i, N} : i \in \mathcal{I}_{\beta}\}$ (with respect to the CWP model) such that
  \begin{align}
    \mathcal{M}_{i, N} \;\subset\; \mathcal{S}_{i, N} \;\subset\; \mathcal{V}_{i, N},
    \qquad i \in \mathcal{I}_{\beta},
  \end{align}
  where, for any $i \in \mathcal{I}_{\beta}$, the \emph{local valley} $\mathcal{V}_{i, N}$ around the metastable set $\mathcal{M}_{i, N}$ is defined by
  \begin{align}
    \mathcal{V}_{i, N}
    \;\ldef\;
    \Bigl\{
      \sigma \in \mathcal{S}_{N} \!\setminus\! \mathcal{M}_{N} :
      h_{\mathcal{M}_{i, N}, \mathcal{M}_{N} \setminus \mathcal{M}_{i, N}}^{N}(\sigma)
      \geq
      \max_{j \neq i}
      h_{\mathcal{M}_{j, N}, \mathcal{M}_{N} \setminus \mathcal{M}_{j, N}}^{N}(\sigma)
    \Bigr\}.
  \end{align}
\end{definition}
\begin{remark}\label{remark:sym_construction}
  Note that a metastable partition of the state space $\mathcal{S}_{N}$ is not uniquely determined by the conditions above: there may exist subsets $\mathcal{X} \subset \mathcal{V}_{i, N} \cap \mathcal{V}_{j, N}$ lying in the intersection of two distinct local valleys $\mathcal{V}_{i, N}$ and $\mathcal{V}_{j,N}$. However, it is well known (cf. \cite[Lemma~3.2]{SS19} and \cite[Lemma~8.7]{BdH15}) that the Gibbs measure $\tilde{\mu}_{N}[X]$ of such a set is negligible compared with the Gibbs measures $\tilde{\mu}_{N}[\mathcal{M}_{i, N}]$ and $\tilde{\mu}_{N}[\mathcal{M}_{j, N}]$ of the corresponding metastable sets $\mathcal{M}_{i, N}$ and $\mathcal{M}_{j, N}$. In particular, we may choose the metastable partition $\{\mathcal{S}_{i, N} : i \in \mathcal{I}_{\beta}\}$ so that, for every $i \in \mathcal{I}_{\beta}$, $\mathcal{S}_{i, N} = L_{N}^{-1}(\boldsymbol{S}_{i, N})$ for some $\boldsymbol{S}_{i, N} \subset \mathcal{P}_{N}$.
\end{remark}
\begin{figure}
  \centering
  \begin{tikzpicture}[line cap=round,line join=round,>=triangle 45,x=0.5cm,y=0.5cm]
    \clip(-0.23,0.02) rectangle (10.78,4.49);
    \draw [->] (4.83,2.18) -- (6.06,2.16);
    \begin{scriptsize}
      \fill [color=fftttt] (1.02,1.44) circle (4.5pt);
      \fill [color=fftttt] (1.94,1.44) circle (4.5pt);
      \fill [color=eeeeee] (1.48,2.24) circle (4.5pt);
      \fill [color=ttttff] (2.4,2.24) circle (4.5pt);
      \fill [color=ttttff] (1.94,3.05) circle (4.5pt);
      \fill [color=ttffqq] (2.87,1.44) circle (4.5pt);
      \fill [color=eeeeee] (3.33,2.25) circle (4.5pt);
      \fill [color=ttttff] (2.87,3.05) circle (4.5pt);
      \fill [color=ttttff] (2.4,3.85) circle (4.5pt);
      \fill [color=ttffqq] (3.79,1.45) circle (4.5pt);
      \fill [color=fftttt] (1.48,0.64) circle (4.5pt);
      \fill [color=eeeeee] (2.41,0.64) circle (4.5pt);
      \fill [color=ttffqq] (3.33,0.64) circle (4.5pt);
      \fill [color=fftttt] (0.56,0.64) circle (4.5pt);
      \fill [color=ttffqq] (4.26,0.64) circle (4.5pt);
      \fill [color=fftttt] (6.98,1.39) circle (4.5pt);
      \fill [color=fftttt] (7.91,1.39) circle (4.5pt);
      \fill [color=fftttt] (7.44,2.19) circle (4.5pt);
      \fill [color=ttttff] (8.37,2.19) circle (4.5pt);
      \fill [color=ttttff] (7.91,2.99) circle (4.5pt);
      \fill [color=ttffqq] (8.83,1.39) circle (4.5pt);
      \fill [color=ttttff] (9.29,2.19) circle (4.5pt);
      \fill [color=ttttff] (8.83,3) circle (4.5pt);
      \fill [color=ttttff] (8.37,3.8) circle (4.5pt);
      \fill [color=ttffqq] (9.76,1.39) circle (4.5pt);
      \fill [color=fftttt] (7.45,0.59) circle (4.5pt);
      \fill [color=ttffqq] (8.37,0.59) circle (4.5pt);
      \fill [color=ttffqq] (9.3,0.59) circle (4.5pt);
      \fill [color=fftttt] (6.52,0.59) circle (4.5pt);
      \fill [color=ttffqq] (10.22,0.59) circle (4.5pt);
    \end{scriptsize}
  \end{tikzpicture}\\
  \begin{tikzpicture}[line cap=round,line join=round,>=triangle 45,x=0.5cm,y=0.5cm]
    \clip(-0.09,0.62) rectangle (10.42,4.77);
    \draw [->] (4.76,2.64) -- (5.99,2.62);
    \begin{scriptsize}
      \fill [color=fftttt] (1.02,1.44) circle (4.5pt);
      \fill [color=fftttt] (1.94,1.44) circle (4.5pt);
      \fill [color=fftttt] (1.48,2.24) circle (4.5pt);
      \fill [color=eeeeee] (2.4,2.24) circle (4.5pt);
      \fill [color=ttttff] (1.94,3.05) circle (4.5pt);
      \fill [color=ttffqq] (2.87,1.44) circle (4.5pt);
      \fill [color=ttffqq] (3.33,2.25) circle (4.5pt);
      \fill [color=ttttff] (2.87,3.05) circle (4.5pt);
      \fill [color=ttttff] (2.4,3.85) circle (4.5pt);
      \fill [color=ttffqq] (3.79,1.45) circle (4.5pt);
      \fill [color=fftttt] (6.98,1.39) circle (4.5pt);
      \fill [color=fftttt] (7.91,1.39) circle (4.5pt);
      \fill [color=fftttt] (7.44,2.19) circle (4.5pt);
      \fill [color=fftttt] (8.37,2.19) circle (4.5pt);
      \fill [color=ttttff] (7.91,2.99) circle (4.5pt);
      \fill [color=ttffqq] (8.83,1.39) circle (4.5pt);
      \fill [color=ttffqq] (9.29,2.19) circle (4.5pt);
      \fill [color=ttttff] (8.83,3) circle (4.5pt);
      \fill [color=ttttff] (8.37,3.8) circle (4.5pt);
      \fill [color=ttffqq] (9.76,1.39) circle (4.5pt);
    \end{scriptsize}
  \end{tikzpicture}
  \caption{%
    Graphical representation of a metastable partition of $\mathcal{P}_{N}$ (with $N \in \{5, 6\}$ and $q=3$) for the regimes in which $\boldsymbol{m}_{0}$ is not a local minimum. The red, blue and green points denotes $\boldsymbol{S}_{1, N}$, $\boldsymbol{S}_{2, N}$ and $\boldsymbol{S}_{3,N}$, respectively. Points shown in grey may be assigned to more than one set in the partition, as explained in Remark~\ref{remark:sym_construction}.
  }
\end{figure}

Before proceeding, we need to control the measure of each component within the meta\-stable partition. The following technical lemma establishes that this measure deviates from that of the metastable sets by at most a polynomial factor.
\begin{lemma}\label{lemma:partition_weight}
  Suppose that $\beta > \beta_{1}$. Further, $N_{0}(\beta)$ be as in Proposition~\ref{prop:preem_cap_estimates}-(ii). Then, there exists $\ell_{4} \equiv \ell_{4}(\beta, q) \in [q, \infty)$ such that, for all $N \geq N_{0}(\beta)$,
  \begin{align}\label{eq:annealed_part}
    \tilde{\mu}_{N}\bigl[ \mathcal{S}_{i, N} \bigr]
    \;\leq\;
    N^{\ell_{4}}\tilde{\mu}_{N}\bigl[ \mathcal{M}_{i, N} \bigr],
    \qquad \forall\, i \in \mathcal{I}_{\beta}.
  \end{align}
  In particular, for all $N \geq N_{0}(\beta)$, on the event $\Xi_N(a)$,
  \begin{align}\label{eq:quenched_part}
    \mu_{N}\bigl[ \mathcal{S}_{i,N} \bigr]
    \;\leq\;
    N^{\ell_{4}} \me^{2\beta a\sqrt{N}} \mu_{N}\bigl[ \mathcal{M}_{i, N} \bigr],
    \qquad \forall\, i \in \mathcal{I}_{\beta}.
  \end{align}
\end{lemma}
\begin{proof}
  Fix some arbitrary $i \in \mathcal{I}_{\beta}$. Then, the definition of the local valley implies that, for any $\sigma \in \mathcal{V}_{i, N}$,
  \begin{align*}
    1
    &\;=\;
    \sum_{j \in \mathcal{I}_{\beta}}
    \widetilde{\Prob}_{\!\sigma}^{N}\bigl[
      \widetilde{\Sigma}^{N}(\tilde{\tau}_{\mathcal{M}_{N}}^{N}) \in \mathcal{M}_{j, N}
    \bigr]
    \\
    &\;=\;
    \sum_{j \in \mathcal{I}_{\beta}}
    \widetilde{\Prob}_{\!\sigma}^{N}\bigl[\tilde{\tau}_{\mathcal{M}_{j, N}}^{N} < \tilde{\tau}_{\mathcal{M}_{N} \setminus \mathcal{M}_{j, N}}^{N}\bigr]
    \;\leq\;
    |\mathcal{I}_{\beta}|\,
    \widetilde{\Prob}_{\!\sigma}^{N}\bigl[\tilde{\tau}_{\mathcal{M}_{i, N}}^{N} < \tilde{\tau}_{\mathcal{M}_{N} \setminus \mathcal{M}_{i, N}}^{N}\bigr].
  \end{align*}
  Consequently, for every $\sigma \in \mathcal{S}_{i, N} \!\setminus\! \mathcal{M}_{i, N}$ we have $\widetilde{h}_{\mathcal{M}_{i, N}, \mathcal{M}_{N} \setminus \mathcal{M}_{i, N}}^{N}(\sigma) \geq 1/|\mathcal{I}_{\beta}|$. On the other hand, using the properties of the equilibrium potientials $\widetilde{h}_{\mathcal{M}_{i, N}, \mathcal{M}_{N} \setminus \mathcal{M}_{i, N}}^{N}$ and $\widetilde{h}_{\mathcal{S}_{i, N} \setminus \mathcal{M}_{i, N}, \mathcal{M}_{N}}^{N}$, it follows that
  \begin{align}\label{eq:annealed:part:est1}
    0
    &\;=\;
    \scpr{%
      \widetilde h_{\mathcal{M}_{i, N}, \mathcal{M}_{N} \setminus \mathcal{M}_{i, N}}^{N},
      -\widetilde{\mathcal{L}}_{N} \widetilde{h}_{\mathcal{S}_{i, N} \setminus \mathcal{M}_{i, N}, \mathcal{M}_{N}}^{N}
    }_{\tilde{\mu}_{N}}
    \nonumber\\[1.5ex]
    &\;\geq\;
    \frac{1}{|\mathcal{I}_{\beta}|}\,
    \widetilde\capacity_{N}\bigl(\mathcal{S}_{i, N} \setminus \mathcal{M}_{i, N}, \mathcal{M}_{N}\bigr)
    -
    \sum_{\sigma \in \mathcal{M}_{i, N}} \mspace{-6mu}
    \tilde{\mu}_{N}(\sigma)\,
    \widetilde{\Prob}_{\sigma}^{N}\bigl[
      \tilde{\tau}_{\mathcal{S}_{i, N} \!\setminus\! \mathcal{M}_{i,N}}^{N} < \tilde{\tau}_{\mathcal{M}_{N}}^{N}\bigr]
    \nonumber\\
    &\;\geq\;
    \frac{1}{|\mathcal{I}_{\beta}|}\,
    \widetilde{\capacity}_{N}\bigl(
      \mathcal{S}_{i, N} \!\setminus\! \mathcal{M}_{i, N}, \mathcal{M}_{N}
    \bigr)
    - \tilde{\mu}_{N}\bigl[\mathcal{M}_{i, N}\bigr].
  \end{align}
  Thus, we are left with bounding $\widetilde{\capacity}_{N}\bigl(\mathcal{S}_{i, N} \!\setminus\! \mathcal{M}_{i, N}, \mathcal{M}_{N} \bigr)$ from below. Repeating the argument of \eqref{eq:def:meta:lb1}, estimating $|\boldsymbol{S}_{i, N} \!\setminus\! \{\boldsymbol{m}_{i, N}\}| \leq N^{g}$, and invoking Proposition~\ref{prop:preem_cap_estimates}-(ii), we deduce that for any $N \geq N_{0}(\beta)$,
  \begin{align}\label{eq:annealed:part:est2}
    \widetilde{\capacity}_{N}\bigl(
      \mathcal{S}_{i, N} \!\setminus\! \mathcal{M}_{i, N}, \mathcal{M}_{N}
    \bigr)
    &\overset{\eqref{mesocap}}{\;=\;}
    \capacitym_{N}\bigl(
      \boldsymbol{S}_{i, N} \!\setminus\! \{\boldsymbol{m}_{i, N}\}, \boldsymbol{M}_{N}
    \bigr)
    \nonumber\\[1ex]
    &\;\geq\;
    N^{-\ell_{3}-q}\,
    \widetilde{Q}_{N}\bigl[
      \boldsymbol{S}_{i, N} \!\setminus\! \{\boldsymbol{m}_{i, N}\}
    \bigr]
    \;=\;
    N^{-\ell_{3}-q}\,
    \tilde{\mu}_{N}\bigl[
      \mathcal{S}_{i, N} \!\setminus\! \mathcal{M}_{i, N}
    \bigr]
  \end{align}
  Thus, by combining \eqref{eq:annealed:part:est1} and \eqref{eq:annealed:part:est2}, the assertion \eqref{eq:annealed_part} is immediate. Finally, by combining \eqref{eq:annealed_part} with \eqref{eq:xi_control_prob}, concludes the proof of \eqref{eq:quenched_part}.
\end{proof}
This result allows us to compare measures of different elements of the metastable partition. For $i, j \in \mathcal{I}_{\beta}$, by monotonicity of the measure and the previous lemma, we have
\begin{align}\label{eq:measure_comp}
  \frac{%
    \mu_{N}\bigl[\mathcal{S}_{i, N}\bigr]
  }{
    \mu_{N}\bigl[\mathcal{S}_{j, N}\bigr]
  }
  \;\leq\;
  \me^{2 \beta a \sqrt{N}} N^{\ell_{4}}\,
  \frac{%
    \widetilde{\mu}_{N}\bigl[\mathcal{M}_{i, N}\bigr]
  }{
    \widetilde{\mu}_{N}\bigl[\mathcal{M}_{j, N}\bigr]
  }
  \;=\;
  \me^{2\beta a \sqrt{N}} N^{\ell_{4}}\,
  \me^{%
    \beta N(\widetilde{F}_{\beta, q}(\boldsymbol{m}_{j})
    - \widetilde{F}_{\beta, q}(\boldsymbol{m}_{i})+O(N^{-1}))
  }. 
\end{align}
with an analogous lower bound.

In the next proposition, we estimate the $\ell_{1}(\mu_N)$-norm of the harmonic function, where $\mathcal{A}_{N},\mathcal{B}_{N}$ are the initial and final sets in each regime, as in Definition \ref{def:regimes}. The proof is inspired by \cite[Lemma~3.3]{SS19} and \cite[Proposition~5.2]{BdHMPS24}, and we include detailed explanations only where it deviates from their methods. The main modifications are due to the presence of multiple regimes in the DCWP model and the choice of unbounded random variables $J_{ij}$.
\begin{proposition}\label{prop:harm_estimate}
  For any $\beta > \beta_{1}$ let $\mathcal{A}_{N}$ and $\mathcal{B}_{N}$ be chosen as in Definition~\ref{def:regimes}, and $a > \sqrt{v \ln q}$. Then, there exists a constant $C_{1} \in (0, k_{1})$ such that, on the event $\Xi_{N}(a)$, as $N \to \infty$
  \begin{align}
    \label{eq:harm:estimate}
    \norm{h_{\mathcal{A}_{N}, \mathcal{B}_{N}}^{N}}_{\mu_{N}}
    \;=\;
    \mu_{N}\bigl[ \mathcal{S}_{\mathcal{A}, N}\bigr]\, \bigl(1 + O(\me^{-C_{1} N})\bigr),
  \end{align}
  where $\mathcal{S}_{\mathcal{A}, N}$ denotes the union of the elements of the metastable partition $\{\mathcal{S}_{i, N} : i \in \mathcal{I}_{\beta}\}$ corresponding to the metastable sets in $\mathcal{A}_N$, that is,
  \begin{align}\label{eq:union_part}
    \mathcal{S}_{\mathcal{A}, N}
    \;=\;
    \bigcup_{i : \mathcal{M}_{i, N} \subset \mathcal{A}_{N}} \mathcal{S}_{i, N}.
  \end{align}
\end{proposition}
\begin{proof}
  The proof comprises two steps. In the first step, we provide a regime independent upper bound for the average of the equilibrium potential between different metastable sets with respect to the Gibbs measure conditioned on the element of the metastable partition corresponding to the hitting set. This is done by comparison with the CWP model. In the second step, we control the harmonic sum by decomposing it over the elements of the metastable partition. This decomposition is regime dependent, and depends on the relative weights of the metastable partition under $\mu_{N}$.
  \medskip 
  
  \textit{Step 1.} 
  Let $a$ be as stated and $k \in (0, k_{1})$. We claim that for any two metastable sets $\mathcal{M}_{i, N},\mathcal{M}_{j, N}$, any $\varepsilon \in (0, 1]$ and large enough $N$, on the event $\Xi_{N}(a)$, 
  \begin{align} \label{eq:harm:claim:1}
    \norm{h_{\mathcal{M}_{j, N}, \mathcal{M}_{i, N}}^{N}}_{\mu_{N} \rvert  \mathcal{S}_{i, N}}
    \;\leq\;
    \varepsilon +  \me^{-k N}\, \ln(1/\varepsilon) 
    \min\biggl\{
      1, \frac{\mu_{N}[\mathcal{S}_{j, N}]}{\mu_{N}[\mathcal{S}_{i, N}]}
    \biggr\}.
  \end{align}
  Indeed, this literally follows from \cite[Proposition 5.2, Step 1.]{BdHMPS24} except for the following modification: By using \eqref{eq:xi_control_prob}, we have that, for any non-empty $ \mathcal X\subset \mathcal{S}_{k,N} \setminus \mathcal{M}_{k,N}$, on the event  $\Xi_N(a)$,
  \begin{align} \label{eq:muX}
    \mu_{N}[\mathcal{X}]
    &\;=\;
    \frac{%
      \capacity_{N}(\mathcal{X}, \mathcal{M}_{i, N})
    }{%
      \Prob_{\mu_{N} \rvert \mathcal{X}}^{N}\bigl[
        \tau^{N}_{\mathcal{M}_{i, N}} < \tau^{N}_{\mathcal{X}}
      \bigr]
    }
    \nonumber\\
    &\overset{\mspace{-15mu}\eqref{eq:xi_control_prob}\mspace{-15mu}}{\;\leq\;}
    \me^{2\beta a\sqrt{N}}\,
    \frac{%
      \capacity_{N}(\mathcal{X}, \mathcal{M}_{i, N})
    }{%
      \widetilde{\Prob}_{\tilde{\mu}_{N} \rvert \mathcal{X}}^{N}\bigl[
        \tilde{\tau}_{\mathcal{M}_{i, N}}^{N} < \tilde{\tau}_{\mathcal{X}}^{N}
      \bigr]
    }
    \nonumber\\
    &\;\leq\;
    \me^{2 \beta a\sqrt{N}}\, \me^{-k N}
    \biggl(
      \max_{\ell \in \mathcal{I}_{\beta}} 
      \widetilde{\Prob}_{\tilde{\mu}_{N} \rvert \mathcal{M}_{\ell, N}}^{N}\bigl[
        \tilde{\tau}_{\mathcal{M}_{N} \setminus \mathcal{M}_{\ell, N}}^{N} 
        < \tilde{\tau}_{\mathcal{M}_{\ell, N}}^{N}
      \bigr]
    \biggr)^{\!\!-1}
    \capacity_{N}(\mathcal{X}, \mathcal{M}_{i, N})
    \nonumber\\[.5ex]
    &\overset{\mspace{-15mu}\eqref{eq:xi_control_prob}\mspace{-15mu}}{\;\leq\;}
    \me^{4 \beta a\sqrt{N}}\,
    \me^{-k N}
    \biggl(
      \max_{\ell \in \mathcal{I}_{\beta}} 
      \Prob_{\mu_{N} \rvert \mathcal{M}_{\ell, N}}^{N}\bigl[
        \tau_{\mathcal{M}_{N} \setminus \mathcal{M}_{\ell, N}}^{N} 
        < \tau_{\mathcal{M}_{\ell, N}}^{N}
      \bigr]
    \biggr)^{\!\!-1}
    \capacity_{N}(\mathcal{X}, \mathcal{M}_{i, N}),
  \end{align}
  where we applied Theorem~\ref{thm:meta:cwp} and \cite[Lemma~3.1]{SS19} to obtain the second inequality. Note that \cite[Lemma~3.1]{SS19} cannot be applied directly in the first line because the sets $\{\mathcal{S}_{i,N} i \in \mathcal{I}_{\beta}\}$ form a metastable partition for the \emph{CWP model}, not the \emph{DCWP} model; this necessitates the estimates in the second and third lines.
  \medskip
  
  \textit{Step 2.}
  In view of \eqref{eq:harm:claim:1}, the proof of \eqref{eq:harm:estimate} follows the same lines as that of \cite[Theorem~1.7]{SS19}; however, modifications are required for each regime in Definition~\ref{def:regimes}.
  
  \textit{First metastable regime:} Let $\beta_{1} < \beta \leq \beta_2$, $\mathcal{A}_{N} = \bigcup_{\ell = 1}^{q}{\mathcal{M}}_{\ell, N}$, $ \mathcal{B}_{N} = \mathcal{M}_{0, N},$ and write
  \begin{align}\label{eq:hABmu_dec_r1}
    \norm{h_{\mathcal{A}_{N}, \mathcal{B}_{N}}^{N}}_{\mu_{N}}
    \;=\;
    \mu_{N}\bigl[\mathcal{S}_{0, N}\bigr]\,
    \norm{h_{\mathcal{A}_{N}, \mathcal{B}_N}^{N}}_{\mu_{N} \vert  \mathcal{S}_{0, N}}
    + \sum_{j=1}^{q} \mu_{N}\bigl[\mathcal{S}_{j, N}\bigr]\, \norm{h_{\mathcal{A}_{N}, \mathcal{B}_{N}}^{N}}_{\mu_{N} \vert \mathcal{S}_{j, N}}.
  \end{align}
  In order to prove a lower bound, we neglect the first term appearing on the right-hand side of \eqref{eq:hABmu_dec_r1}, while bounding each term in the remaining sum from below by
  \begin{align}\label{eq:r1:est1}
    \norm{h_{\mathcal{A}_{N}, \mathcal{B}_{N}}^{N}}_{\mu_{N} \vert  \mathcal{S}_{j, N}}
    \;=\;
    1 - 
    \norm{h_{\mathcal{B}_{N}, \mathcal{A}_{N}}^{N}}_{\mu_{N} \vert  \mathcal{S}_{j, N}}
    \;\geq\;
    1 - \norm{h_{\mathcal{M}_{0, N}, \mathcal{M}_{j, N}}^{N}}_{\mu_N \vert  \mathcal{S}_{j, N}},
  \end{align}
  where we used that, for all $\sigma \in \mathcal{S}_{N} \setminus (\mathcal{A}_{N} \cup \mathcal{B}_{N})$,
  \begin{align*}
    h_{\mathcal{B}_{N}, \mathcal{A}_N}^{N}(\sigma)
    &\;=\;
    \Prob_{\!\sigma}^{N}\Bigl[
      \tau_{\mathcal{M}_{0, N}}^{N}
      < \tau_{\bigcup_{\ell=1}^{q} \mathcal{M}_{\ell, N}}^{N}
    \Bigr]
    \;\leq\;
    \Prob_{\!\sigma}^{N}\bigl[
      \tau_{\mathcal{M}_{0, N}}^{N} < \tau_{\mathcal{M}_{j, N}}^{N}
    \bigr]
    \;=\;
    h_{\mathcal{M}_{0, N}, \mathcal{M}_{j, N}}^{N}(\sigma).
  \end{align*}
  By applying \eqref{eq:harm:claim:1} with $\varepsilon = \me^{-k N}$, we further get that, on the event $\Xi_{N}(a)$,
  \begin{align}\label{eq:r1:est2}
    \norm{h_{\mathcal{M}_{0, N}, \mathcal{M}_{j, N}}^{N}}_{\mu_N \vert  \mathcal{S}_{j, N}}
    \;\leq\;
    \varepsilon + \me^{-k N} \ln(1/\varepsilon)
    \;=\;
    \me^{-k N} (1 + k N)
  \end{align}
  for any $j \in \{1, \ldots, q\}$. Thus, by combining \eqref{eq:r1:est1} and \eqref{eq:r1:est2} with \eqref{eq:hABmu_dec_r1} and using the additivity of the Gibbs measure $\mu_{N}$, we obtain on the event $\Xi_{N}(a)$,
  \begin{align}\label{eq:harm:sum:lb1}
    \norm{h_{\mathcal{A}_{N}, \mathcal{B}_{N}}^{N}}_{\mu_{N}}
    \;\geq\;
    \mu_{N}\bigl[\mathcal{S}_{\mathcal{A}, N}\bigr]\,
    \bigl(
      1 -  \me^{-k N} (1 + k N)
    \bigr).
  \end{align}
  To get the upper bound, by recalling that $h_{\mathcal{A}_{N} \mathcal{B}_{N}}^{N} \leq 1$, we immediately have that the last sum appearing on the right-hand side of \eqref{eq:hABmu_dec_r1} is bounded from above by $\mu_{N}\bigl[\mathcal{S}_{\mathcal{A}, N}\bigr]$. Since for any $\sigma \in \mathcal{S}_{N}$,
  \begin{align*}
    \Prob_{\!\sigma}^{N}\Bigl[
      \tau_{\bigcup_{\ell=1}^{q} \mathcal{M}_{\ell, N}}^{N}
      < \tau_{\mathcal{M}_{0, N}}^{N}
    \Bigr]
    \;=\;
    \sum_{\ell=1}^{q}\,
    \Prob_{\!\sigma}^{N}\bigl[
      \tau_{\mathcal{M}_{\ell, N}}^{N}
      < \tau_{\mathcal{M}_{N} \setminus \mathcal{M}_{\ell, N}}^{N}
    \bigr]
    \;\leq\;
    \sum_{\ell=1}^{q}\,
    \Prob_{\!\sigma}^{N}\bigl[
      \tau_{\mathcal{M}_{\ell, N}}^{N} < \tau_{\mathcal{M}_{0, N}}^{N}
    \bigr]
  \end{align*}
  an application of Minkowski's inequality and \eqref{eq:harm:claim:1} yields that, on the event $\Xi_{N}(a)$,
  \begin{align*}
    \norm{h_{\mathcal{A}_{N}, \mathcal{B}_{N}}^{N}}_{\mu_{N} \vert \mathcal{S}_{0, N}}
    &\;\leq\;
    \sum_{\ell=1}^{q}\;
    \norm{h_{\mathcal{M}_{\ell, N}, \mathcal{M}_{0, N}}^{N}}_{\mu_{N} \vert \mathcal{S}_{0, N}}
    \;\leq\;
    q\Biggl(
      \varepsilon + \me^{-k N} \ln(1/\varepsilon)\,
      \frac{%
        \mu_{N}\bigl[\mathcal{S}_{\mathcal{A}, N}\bigr]
      }{\mu_{N}\bigl[\mathcal{S}_{0, N}\bigr]}
    \Biggr).
  \end{align*}
  By combining the above estimates and choosing $\varepsilon = \me^{-k N} \mu_{N}\bigl[\mathcal{S}_{\mathcal{A}, N}\bigr] / \mu_{N}\bigl[\mathcal{S}_{0, N}\bigr]$, we therefore obtain that, on the event $\Xi_{N}(a)$,
  \begin{align}
    \norm{h_{\mathcal{A}_N, \mathcal{B}_N}^N}_{\mu_N}
    \;\leq\;
    \mu_{N}\bigl[\mathcal{S}_{\mathcal{A}, N}\bigr]\,
    \Biggl(
      1 + q \me^{-k N}
      \Biggl(
        1 + k N + \ln \frac{\mu_{N}\bigl[\mathcal{S}_{0, N}\bigr]}{\mu_{N}\bigl[\mathcal{S}_{\mathcal{A}, N}\bigr]}
      \Biggr)
    \Biggr)
  \end{align}
  We conclude by noting that, due to \eqref{eq:measure_comp}, there exists a deterministic constant $c \in (0, \infty)$ such that $\ln\bigl(\mu_{N}[\mathcal{S}_{0, N}]/\mu_{N}[\mathcal{S}_{\mathcal{A}, N}]\bigr) \leq c N$.
  \smallskip
  
  \textit{Second metastable regime:} Let $\beta_{2} < \beta < \beta_{4}$, $ \mathcal{A}_{N} = \mathcal{M}_{0, N}$, $ \mathcal{B}_{N} = \cup_{\ell=1}^{q} \mathcal{M}_{\ell, N}$, and write
  \begin{align}\label{eq:hABmu_dec}
    \norm{h_{\mathcal{A}_{N}, \mathcal{B}_{N}}^{N}}_{\mu_{N}}
    \;=\;
    \mu_{N}\bigl[\mathcal{S}_{0, N}\bigr]\,
    \Biggl(
      \norm{h_{\mathcal{A}_{N}, \mathcal{B}_N}^N}_{\mu_N \vert  \mathcal{S}_{0,N}}
      +
      \sum_{j=1}^{q}\,
      \frac{%
        \mu_{N}\bigl[\mathcal{S}_{j, N}\bigr]
      }{\mu_{N}\bigl[\mathcal{S}_{0, N}\bigr]}\,
      \norm{h_{\mathcal{A}_{N}, \mathcal{B}_{N}}^{N}}_{\mu_{N} \vert  \mathcal{S}_{j, N}}
    \Biggr).
  \end{align}
  In order to prove a lower bound, we neglect the final term, while the first term is bounded from below by
  \begin{align}\label{eq:h:lb:arg}
    \norm{h_{\mathcal{A}_{N}, \mathcal{B}_{N}}^{N}}_{\mu_{N} \vert \mathcal{S}_{0, N}}
    \;=\;
    1 - \norm{h_{\mathcal{B}_{N}, \mathcal{A}_{N}}^{N}}_{\mu_{N} \vert  \mathcal{S}_{0, N}}
    \;\geq\;
    1 - \sum_{\ell=1}^{q}\;
    \norm{h_{\mathcal{M}_{\ell, N}, \mathcal{M}_{0, N}}^{N}}_{\mu_{N} \vert  \mathcal{S}_{0, N}}.
  \end{align}
  By applying again \eqref{eq:harm:claim:1} with $\varepsilon = \me^{-kN}$, we obtain that, on the event $\Xi_{N}(a)$,
  \begin{align*}
    \norm{h_{\mathcal{A}_{N}, \mathcal{B}_{N}}^{N}}_{\mu_{N} \vert  \mathcal{S}_{0, N}}
    \;\geq\;
    1 - q\bigl( \varepsilon + \me^{-k N} \ln(1/\varepsilon)\bigr)
    \;\geq\;
    1 - q\me^{-k N} (1 + k N).
  \end{align*}
  Thus, on the event $\Xi_{N}(a)$,
  \begin{align}\label{eq:harm:sum:lb2}
    \norm{h_{\mathcal{A}_{N}, \mathcal{B}_{N}}^{N}}_{\mu_{N}}
    \;\geq\;
    \mu_{N}\bigl[\mathcal{S}_{0, N}\bigr]
    \bigl(
      1 - q \me^{-k N} ( 1 + k N )
    \bigr).
  \end{align}
  For the upper bound, we bound the first term inside the bracket on the right-hand of \eqref{eq:hABmu_dec} from above by $1$. On the other hand, for any $j \in \{1, \ldots, q\}$, an application of \eqref{eq:harm:claim:1} with $\varepsilon = \me^{-k N} \mu_{N}\bigl[\mathcal{S}_{0, N}\bigr] / \mu_{N}\bigl[\mathcal{S}_{j, N}\bigr]$ yields, on the event $\Xi_{N}(a)$,
  \begin{align}\label{eq:SjS0reg2}
    \norm{h_{\mathcal{A}_{N}, \mathcal{B}_{N}}^{N}}_{\mu_N \vert \mathcal{S}_{j, N}}
    &\;\leq\;
    \norm{h_{\mathcal{M}_{0, N}, \mathcal{M}_{j, N}}^N}_{\mu_N \vert  \mathcal{S}_{j, N}}
    \nonumber\\[.5ex]
    &\;\leq\;
    \frac{\mu_{N}\bigl[\mathcal{S}_{0, N}\bigr]}{\mu_{N}\bigl[\mathcal{S}_{j,N}\bigr]}\, \me^{-k N}\,
    \Biggl(
      1 + k N + \ln \frac{\mu_{N}\bigl[\mathcal{S}_{j,N}\bigr]}{\mu_{N}\bigl[\mathcal{S}_{0, N}\bigr]}
    \Biggr).
  \end{align}
  Hence, we obtain that, on the event $\Xi_{N}(a)$,
  \begin{align}
    \norm{h_{\mathcal{A}_{N}, \mathcal{B}_{N}}^{N}}_{\mu_{N}}
    \;\leq\;
    \mu_{N}\bigl[\mathcal{S}_{0, N}\bigr]\,
    \Biggl(
      1 + q \me^{-k N}
      \Biggl(
        1 + k N + \max_{j \in \{1, \ldots, q\}}
        \ln \frac{\mu_{N}\bigl[\mathcal{S}_{j,N}\bigr]}{\mu_{N}\bigl[\mathcal{S}_{0, N}\bigr]}
      \Biggr)
    \Biggr).
  \end{align}
  Using \eqref{eq:measure_comp} again, there exists a deterministic constant $c \in (0, \infty)$ such that, for every $j \in \{1, \ldots, q\}$, $\ln\bigl(\mu_{N}[\mathcal{S}_{j, N}] / \mu_{N}[\mathcal{S}_{0, N}]\bigr)$ is bounded from above by $c N$.
  \smallskip
  
  \textit{Tunneling regime:} Let $\beta_{2} < \beta < \beta_{4}$, $\mathcal{A}_{N} = \mathcal{M}_{1, N}$, $\mathcal{B}_{N} = \cup_{\ell=2}^{q} \mathcal{M}_{\ell, N}$, and write
  \begin{align}\label{eq:hABmu_reg3}
    \norm{h_{\mathcal{A}_{N}, \mathcal{B}_{N}}^{N}}_{\mu_{N}}
    \;=\;
    \mu_{N}\bigl[\mathcal{S}_{1, N}\bigr]
    \Biggl(
      \norm{h_{\mathcal{A}_{N}, \mathcal{B}_{N}}^{N}}_{\mu_{N} \vert  \mathcal{S}_{1, N}}
      +
      \sum_{j=2}^{q}
      \frac{
        \mu_{N}\bigl[\mathcal{S}_{j, N}\bigr]
      }{\mu_{N}\bigl[\mathcal{S}_{1, N}\bigr]}\,
      \norm{h_{\mathcal{A}_{N}, \mathcal{B}_{N}}^{N}}_{\mu_{N} \vert \mathcal{S}_{j, N}}
      \nonumber\\
      +\,
      \frac{
        \mu_{N}\bigl[\mathcal{S}_{0, N}\bigr]
      }{\mu_{N}\bigl[\mathcal{S}_{1, N}\bigr]}\,
      \norm{h_{\mathcal{A}_{N}, \mathcal{B}_{N}}^{N}}_{\mu_{N} \vert \mathcal{S}_{0, N}}
    \Biggr).
  \end{align}
  The lower bound is obtained via a similar procedure by dropping the second and third terms and using a similar argument as in \eqref{eq:h:lb:arg} to control the first term using \eqref{eq:harm:claim:1}. For an upper bound, we bound the harmonic sums appearing both in the first and third term by $1$, while dealing with the second term as in \eqref{eq:SjS0reg2}. The remaining ratio $\mu_{N}[\mathcal{S}_{0, N}] / \mu_{N}[\mathcal{S}_{1, N}]$ appearing in the final term is controlled via \eqref{eq:measure_comp}. 
  
  Finally, when taking $\beta \geq \beta_{4}$ the proof is analogous with the third term not appearing as $\mathcal{M}_{0, N}$ is not a metastable set. 
\end{proof}
In the next lemma we provide an annealed version of Proposition~\ref{prop:harm_estimate}. This will be used to prove both concentration inequalities and annealed estimates for the harmonic sum in Propositions~\ref{lemma:concharm_sum} and \ref{prop:mean_ln_hsum}.
\begin{lemma}
  \label{lemma:mean_harm_estimate}
  For any $\beta > \beta_{1}$ let $\mathcal{A}_{N}$ and $\mathcal{B}_{N}$ be chosen as in Definition~\ref{def:regimes}. Fix $C_{2} \in (0, C_{1})$, where $C_{1}$ is as in Proposition~\ref{prop:harm_estimate}. Then, as $N \to \infty$,
  \begin{align}
    \label{eq:mean:harm:estimate}
    \mean\Bigl[
      \ln\bigl(
        Z_{N} \norm{h_{\mathcal{A}_{N}, \mathcal{B}_{N}}^{N}}_{\mu_{N}}
      \bigr)
    \Bigr]
    \;=\;
    \mean\Bigl[
      \ln\bigl(
        Z_{N} \mu_{N}\bigl[ \mathcal{S}_{\mathcal{A}, N} \bigr]
      \bigr)
    \Bigr]
    + O\bigl(\me^{-C_{2} N}\bigr).
  \end{align}
\end{lemma}
\begin{proof}
  Let $\Xi_{N}(a)$ be the event defined in \eqref{bound:squaren} and $a > \sqrt{v(C_{1} + \ln q)}$. Then, by applying Proposition~\ref{prop:harm_estimate}, we obtain, for any $N$ large enough,
  \begin{align*}
    &\mean\Bigl[
      \ln\bigl(
        Z_{N} \norm{h_{\mathcal{A}_N, \mathcal{B}_N}^{N}}_{\mu_{N}}
      \bigr)\,
      \indicator_{\Xi_{N}(a)}
    \Bigr]
    \\
    &\mspace{36mu}=\;
    \mean\Bigl[
      \ln\bigl(
        Z_{N} \mu_{N}\bigl[ \mathcal{S}_{\mathcal{A}, N} \bigr]
      \bigr)\,
      \indicator_{\Xi_{N}(a)}
    \Bigr]
    \,+\,
    \ln \bigl(1 + O(\me^{-C_{1} N})\bigr) \prob\bigl[\Xi_{N}(a)\bigr]
    \\
    &\mspace{36mu}=\;
    \mean\Bigl[
      \ln\bigl(
        Z_{N} \mu_{N}\bigl[ \mathcal{S}_{\mathcal{A}, N} \bigr]
      \bigr)\,
      \indicator_{\Xi_{N}(a)}
    \Bigr]
    \,+\,
    O(\me^{-C_{1} N}).
  \end{align*}
  Hence,
  \begin{align}\label{eq:annealed:term3}
    &\mean\Bigl[
      \ln\bigl(
        Z_{N} \norm{h_{\mathcal{A}_N, \mathcal{B}_N}^{N}}_{\mu_{N}}
      \bigr)
    \Bigr]
    \nonumber\\[.5ex]
    &\mspace{36mu}=\;
    \mean\Bigl[
      \ln\bigl(
        Z_{N} \mu_{N}\bigl[ \mathcal{S}_{\mathcal{A}, N} \bigr]
      \bigr)
    \Bigr]
    +
    O(\me^{-C_{1} N})
    +
    \mean\biggl[
      \ln\biggl(
        \frac{
          \norm{h_{\mathcal{A}_N, \mathcal{B}_N}^{N}}_{\mu_{N}}
        }
        {
          \mu_{N}\bigl[ \mathcal{S}_{\mathcal{A}, N} \bigr]
        }
      \biggr)
      \indicator_{\Xi_{N}(a)^{c}}
    \biggr].
  \end{align}
  Thus, we are left with bounding the final term appearing on the right-hand side of \eqref{eq:annealed:term3}. For this purpose, first note that, by the properties of the equilibrium potential, $h_{\mathcal{A}_{N}, \mathcal{B}_{N}}^{N}$, we have that
  \begin{align*}
    \mu_{N}\bigl[\mathcal{A}_{N}\bigr]
    \;\leq\;
    \frac{\norm{h_{\mathcal{A}_N, \mathcal{B}_N}^{N}}_{\mu_{N}}}
    {\mu_{N}\bigl[ \mathcal{S}_{\mathcal{A}, N} \bigr]}
    \;\leq\;
    \frac{1}{\mu_{N}\bigl[\mathcal{A}_{N}\bigr]}
    .
  \end{align*}
  Hence,
  \begin{align*}
    \biggl|
      \mean\biggl[
        \ln\biggl(
          \frac{
            \norm{h_{\mathcal{A}_N, \mathcal{B}_N}^{N}}_{\mu_{N}}
          }
          {
            \mu_{N}\bigl[ \mathcal{S}_{\mathcal{A}, N} \bigr]
          }
        \biggr)
        \indicator_{\Xi_{N}(a)^{c}}
      \biggr]
    \biggr|
    &\;\leq\;
    \mean\biggl[
      \ln\biggl(
        \frac{1}{\mu_{N}\bigl[\mathcal{A}_{N}\bigr]}
      \biggr)
      \indicator_{\Xi_{N}(a)^{c}}
    \biggr]
    \\
    &\;\leq\;
    \ln \mean\biggl[
      \frac{1}{\mu_{N}\bigl[\mathcal{A}_{N}\bigr]}
      \,\bigg|\,
      \Xi_{N}(a)^{c}
    \biggr]\,
    \prob\bigl[\Xi_{N}(a)^{c}\bigr]
    \\
    &\;\leq\;
    \biggl(
      \ln
      \mean\biggl[
        \frac{1}{\mu_{N}\bigl[\mathcal{A}_{N}\bigr]}
      \biggr]\,
      - \ln \prob\bigl[\Xi_{N}(a)^{c}\bigr]
    \biggr)\, \prob\bigl[\Xi_{N}(a)^{c}\bigr].
  \end{align*}
  Since, for any $\xi \in \mathcal{S}_{N}$, $H_{N}(\xi) = \widetilde{H}_{N}(\xi) + \Delta_{N}(\xi)$ and $-N/2 \leq \widetilde{H}_{N}(\xi) \leq 0$, by applying the Cauchy-Schwarz inequality together with Lemma~\ref{coro:betapmapprox} which gives $\mean\bigl[\exp(\pm 2 \beta \Delta_{N}(\xi))\bigr] = \exp(\beta^{2} v (1 + o(1)))$ as $N$ tends to infinity, we obtain, for any $\sigma \in \mathcal{A}_{N}$,
  \begin{align*}
    \mean\biggl[
      \frac{1}{\mu_{N}\bigl[\mathcal{A}_{N}\bigr]}
    \biggr]
    &\;\leq\;
    \sum_{\eta \in \mathcal{S}_{N}}
    \mean\Bigl[
      \me^{-\beta H_{N}(\eta)}\, \me^{\beta H_{N}(\sigma)}
    \Bigr]
    \\
    &\;\leq\;
    \me^{\beta N / 2}\, \sum_{\eta \in \mathcal{S}_{N}}
    \mean\Bigl[\me^{-2\beta \Delta_{N}(\eta)}\Bigr]^{1/2}\,
    \mean\Bigl[\me^{2\beta \Delta_{N}(\sigma)}\Bigr]^{1/2}
    \;=\;
    \me^{N(\beta / 2 + \ln q)}\, \me^{\beta^{2} v (1 + o(1))}.
  \end{align*}
  Hence, using \eqref{eq:xi_control_prob} and the fact that $x \mapsto -x \ln x$ is increasing on $[0, \me^{-1})$, we obtain that for any sufficiently large $N$
  \begin{align}\label{eq:annealed:term3:est}
    &\biggl|
      \mean\biggl[
        \ln\biggl(
          \frac{
            \norm{h_{\mathcal{A}_N, \mathcal{B}_N}^{N}}_{\mu_{N}}
          }
          {
            \mu_{N}\bigl[ \mathcal{S}_{\mathcal{A}, N} \bigr]
          }
        \biggr)
        \indicator_{\Xi_{N}(a)^{c}}
      \biggr]
    \biggr|
    \nonumber\\[.5ex]
    &\mspace{36mu}\leq\;
    \biggl(
      \beta^{2} v \bigl(1 + o(1)\bigr)
      + N \biggl(
        \frac{\beta}{2} + 2 \ln q - \frac{a^{2}}{v} \bigl(1 + o(1)\bigr)
      \biggr)
    \biggr)\,
    \me^{-N (a^{2} / v (1 + o(1)) - \ln q)}.
  \end{align}
  Thus, by combining \eqref{eq:annealed:term3} and \eqref{eq:annealed:term3:est}, the assertion follows.
\end{proof}

\subsection{Concentration estimates}
We are now ready to state some concentration inequalities for the logarithm of the harmonic sum.
\begin{proposition}\label{lemma:concharm_sum}
  For any $\beta > \beta_{1}$ let $\mathcal{A}_{N}$ and $\mathcal{B}_{N}$ be chosen as in Definition~\ref{def:regimes}, and let $C_{2}$ be as in Lemma~\ref{lemma:mean_harm_estimate}. Then, there exists a constant $c \in (0, \infty)$ such that for any $t > 0$, as $N \to \infty$,
  \begin{align}\label{eq:concentration:hsum:ln}
    &\prob\Bigl[
      \bigl|
        \ln\bigl(
          Z_{N} \norm{h^{N}_{\mathcal{A}_{N}, \mathcal{B}_{N}}}_{\mu_{N}}
        \bigr)
        -
        \mean\bigl[
          \ln\bigl(
            Z_{N} \norm{h^{N}_{\mathcal{A}_{N}, \mathcal{B}_{N}}}_{\mu_{N}}
          \bigr)
        \bigr]
      \bigr|
      >
      t
    \Bigr]
    \nonumber\\[.5ex]
    &\mspace{36mu}\leq\;
    2\exp\biggl(
      -\frac{(t-c\, \me^{-C_{2} N})^{2}}{2 \beta^{2} v}
      (1+o(1))
    \biggr)
    +
    2 \exp\bigl(-C_{2} N\bigr).
  \end{align}
\end{proposition}
\begin{proof}
  Let $a > \sqrt{v(C_{2} + \ln q)}$. By Proposition~\ref{prop:harm_estimate} and Lemma~\ref{lemma:mean_harm_estimate} there exists a $c \in (0, \infty)$ such that for all $N$ sufficiently large, on the event $\Xi_{N}(a)$,
  \begin{align*}
    &\Bigl|
      \ln\bigl(
        Z_{N} \norm{h^{N}_{\mathcal{A}_{N}, \mathcal{B}_{N}}}_{\mu_{N}}
      \bigr)
      -
      \mean\Bigl[
        \ln\bigl(
          Z_{N} \norm{h^{N}_{\mathcal{A}_{N}, \mathcal{B}_{N}}}_{\mu_{N}}
        \bigr)
      \Bigr]
    \Bigr|
    \\[.5ex]
    &\mspace{36mu}\leq\;
    \Bigl|
      \ln\bigl(
        Z_{N} \mu_{N}\bigl[\mathcal{S}_{\mathcal{A}_{N}}\bigr]
      \bigr)
      -
      \mean\Bigl[
        \ln\bigl(
          Z_{N} \mu_{N}\bigl[\mathcal{S}_{\mathcal{A}, N}\bigr]
        \bigr)
      \Bigr]
    \Bigr|
    + c\, \me^{-C_{2} N}.
  \end{align*}
  Hence, for any $t > 0$ and $N$ sufficiently large,
  \begin{align}\label{eq:concentration:sum:h:1}
    &\prob\Bigl[
      \bigl|
        \ln\bigl(
          Z_{N} \norm{h^{N}_{\mathcal{A}_{N}, \mathcal{B}_{N}}}_{\mu_{N}}
        \bigr)
        -
        \mean\bigl[
          \ln\bigl(
            Z_{N} \norm{h^{N}_{\mathcal{A}_{N}, \mathcal{B}_{N}}}_{\mu_{N}}
          \bigr)
        \bigr]
      \bigr|
      >
      t
    \Bigr]
    \nonumber\\[.5ex]
    &\mspace{36mu}\leq\;
    \prob\Bigl[
      \bigl|
        \ln\bigl(
          Z_{N} \norm{h^{N}_{\mathcal{A}_{N}, \mathcal{B}_{N}}}_{\mu_{N}}
        \bigr)
        -
        \mean\bigl[
          \ln\bigl(
            Z_{N} \norm{h^{N}_{\mathcal{A}_{N}, \mathcal{B}_{N}}}_{\mu_{N}}
          \bigr)
        \bigr]
      \bigr|
      >
      t,\,
      \Xi_{N}(a)
    \Bigr]
    +
    \prob\bigl[\Xi_{N}(a)^{c}\bigr]
    \nonumber\\[.5ex]
    &\mspace{36mu}\leq\;
    \prob\Bigl[
      \bigl|
        \ln\bigl(
          Z_{N} \mu_{N}\bigl[\mathcal{S}_{\mathcal{A}_{N}}\bigr]
        \bigr)
        -
        \mean\bigl[
          \ln\bigl(
            Z_{N} \mu_{N}\bigl[\mathcal{S}_{\mathcal{A}_{N}}\bigr]
          \bigr)
        \bigr]
      \bigr|
      >
      t - c\, \me^{-C_{2} N}
    \Bigr]
    +
    \prob\bigl[\Xi_{N}(a)^{c}\bigr].
  \end{align}
  By Lemma~\ref{lemma:xi_comp}-(ii) and our choice of $a$, the second term on the right-hand side of \eqref{eq:concentration:sum:h:1} is, for all sufficiently large $N$, bounded by $\me^{-C_{2} N}$. Hence, it remains to bound the first term. To that end we employ an argument analogous to the one used in the proof of Lemma~\ref{lemma:conclnzcap}. For any integer $N \geq 2$, consider the map
  \begin{align*}
    (J_{ij})_{1\leq i<j\leq N}
    \;\longmapsto\;
    \ln\bigl(Z_{N}^{J}\,\mu_{N}^{J}[\mathcal{S}_{\mathcal{A},N}]\bigr),
  \end{align*}
  and, as before, write $H_{N}^{J}$, $Z_{N}^{J}$ and $\mu_{N}^{J}$ to emphasise the dependence on the random array $J = (J_{ij})_{1 \leq i < j \leq N}$. Since $\mathcal{S}_{\mathcal{A}, N}$ is defined in terms of the CWP model and is therefore independent of $J$, \eqref{eq:Lipschitz:H} immediately implies that
  \begin{align*}
    \bigl|
      Z_{N}^{J} \mu_{N}^{J}\bigl[\mathcal{S}_{\mathcal{A}.N}\bigr]
      - Z_{N}^{J'} \mu_{N}^{J'}\bigl[\mathcal{S}_{\mathcal{A}.N}\bigr]
    \bigr|
    \;\leq\;
    \frac{\beta}{N} \sum_{1 \leq i < j \leq N} \bigl| J_{ij} - J_{ij}'\bigr|.
  \end{align*}
  Hence, by applying Corollary~\ref{lemma:general_conc_approx} with $n = N(N-1)/2$, we get for any $t > 0$,
  \begin{align}\label{eq:concentration:sum:h:2}
    \prob\Bigl[
      \bigl|
        \ln\bigl(
          Z_{N} \mu_{N}\bigl[\mathcal{S}_{\mathcal{A}_{N}}\bigr]
        \bigr)
        -
        \mean\bigl[
          \ln\bigl(
            Z_{N} \mu_{N}\bigl[\mathcal{S}_{\mathcal{A}_{N}}\bigr]
          \bigr)
        \bigr]
      \bigr|
      >
      t
    \Bigr]
    \;\leq\;
    2\, \me^{-t^{2}/(2 \beta^{2} v) (1+o(1))}.
  \end{align}
  By combining \eqref{eq:concentration:sum:h:1} and \eqref{eq:concentration:sum:h:2}, we assertion follows.
\end{proof}

\subsection{Annealed estimates}
We first state two lemmas and then prove the main proposition with the annealed estimates for the norm of the harmonic function.
\begin{lemma}\label{lemma:annealed_measure}
  Let $\mathcal{A} \subset \mathcal{S}_{N}$, then as $N \to \infty$
  \begin{align}
    0
    \;\leq\;
    \mean\Bigl[
      \ln\bigl(
        Z_{N} \mu_{N}[\mathcal{A}]
      \bigr)
    \Bigr]
    -
    \ln\bigl(
      \widetilde{Z}_{N} \tilde{\mu}_{N}[\mathcal{A}]
    \bigr)
    \;\leq\;
    \frac{\beta^{2}v}{4}\, (1 + o(1)).
  \end{align}
\end{lemma}
\begin{proof}
  The assertion follows immediately from \cite[Eq. (5.13)-(5.14)]{BdHMPS24}.
\end{proof}
\begin{lemma}\label{lemma:moments_measure}
  Let $\mathcal{A} \subset \mathcal{S}_{N}$, then for any $k \in \mathbb{N}$, as $N \to \infty$,
  \begin{align}
    \widetilde{Z}_{N} \tilde{\mu}_{N}[\mathcal{A}]\, \me^{\beta^{2} v/(4q)(1+o(1))}
    \;\leq\;
    \mean\Bigl[
      \bigl(
        Z_{N} \mu_{N}[\mathcal{A}]
      \bigr)^{k}
    \Bigr]^{1/k}
    \;\leq\;
    \widetilde{Z}_{N} \tilde{\mu}_{N}[\mathcal{A}]\, \me^{k \beta^{2} v/4 (1+o(1))}.
  \end{align}
\end{lemma}
\begin{proof}
  The proof of the upper bound is obtained from \cite[Eq.(5.19)]{BdHMPS24} and Lemma~\ref{coro:betapmapprox}. The lower bound follows by Jensen's inequality and Lemma \ref{lemma:annealed_measure}.
\end{proof}
\begin{proposition}\label{prop:mean_ln_hsum}
  For any $\beta > \beta_{1}$ let $\mathcal{A}_{N}$ and $\mathcal{B}_{N}$ be chosen as in Definition~\ref{def:regimes}.
  \begin{enumerate}[(i)]
  \item
    Let $C_{2}$ be as in Lemma~\ref{lemma:mean_harm_estimate}. Then there exists a constant $c \in (0, \infty)$ such that, for all sufficiently large $N$,
    \begin{align}\label{eq:annealed:h:est}
      \mspace{-36mu}
      -c\, \me^{-C_{2} N}
      \;\leq\;
      \mean\Bigl[
        \ln\bigl(
          Z_{N} \norm{h^{N}_{\mathcal{A}_{N}, \mathcal{B}_{N}}}_{\mu_{N}}
        \bigr)
      \Bigr]
      -
      \ln\bigl(
        \widetilde{Z}_{N} \norm{\tilde{h}^{N}_{\mathcal{A}_{N}, \mathcal{B}_{N}}}_{\tilde{\mu}_{N}}
      \bigr)
      \;\leq\;
      \frac{\beta^{2} v}{4}\, \bigl( 1 + o(1) \bigr).
    \end{align}
    
  \item
    For any $k \geq 1$ and all sufficiently large $N$, we have
    \begin{align}
      \mspace{-36mu}
      \me^{\beta^{2}v/(4q)} \bigl(1 + o(1) \bigr)
      \;\leq\;
      \frac{
        \mean\Bigl[
          \bigl(
            Z_{N} \norm{h^{N}_{\mathcal{A}_{N}, \mathcal{B}_{N}}}_{\mu_{N}}
          \bigr)^{k}
        \Bigr]^{1/k}
      }{
        \widetilde{Z}_{N}
        \norm{\tilde{h}^{N}_{\mathcal{A}_{N}, \mathcal{B}_{N}}}_{\tilde{\mu}_{N}}
      }
      \;\leq\;
      \me^{k \beta^{2} v/4}\, \bigl( 1 + o(1) \bigr).
    \end{align}
  \end{enumerate}
\end{proposition}
\begin{proof}
  \textit{(i)} The assertion \eqref{eq:annealed:h:est} is an immediate consequence of Lemma~\ref{lemma:mean_harm_estimate}, applied to both the CWP and DCWP models, and Lemma \ref{lemma:annealed_measure}.

  \textit{(ii)} Fix $k \geq 1$ and choose $a > \sqrt{v (C_{1} + k \beta + (1 + 2k) \ln q)}$. To bound for the $k$-th moment of the harmonic sum from above, apply Minkowski's inequality and use the fact that $\norm{h_{\mathcal{A}_{N}, \mathcal{B}_{N}}^{N}}_{\mu_{N}} \leq 1$. This yields
  \begin{align}\label{eq:annealed:minkowski:split}
    &\mean\Bigl[
      \bigl(
        Z_{N} \norm{h^{N}_{\mathcal{A}_{N}, \mathcal{B}_{N}}}_{\mu_{N}}
      \bigr)^{k}
    \Bigr]^{1/k}
    \mspace{-6mu}
    \;\leq\;
    \mean\Bigl[
      \bigl(
        Z_{N} \norm{h^{N}_{\mathcal{A}_{N}, \mathcal{B}_{N}}}_{\mu_{N}}
      \bigr)^{k}\,
      \indicator_{\Xi_{N}(a)^{c}}
    \Bigr]^{1/k}
    \!+\,
    \mean\Bigl[
      \bigl(Z_{N}\, \indicator_{\Xi_{N}(a)^{c}} \bigr)^{k}
    \Bigr]^{1/k}.
  \end{align}
  For the first term appearing on the right-hand side of \eqref{eq:annealed:minkowski:split}, Proposition~\ref{prop:harm_estimate} gives, for all sufficiently large $N$,
  \begin{align}
    \mean\Bigl[
      \bigl(
        Z_{N} \norm{h^{N}_{\mathcal{A}_{N}, \mathcal{B}_{N}}}_{\mu_{N}}
      \bigr)^{k}\,
      \indicator_{\Xi_{N}(a)}
    \Bigr]^{1/k}
    \mspace{-6mu}
    \;=\;
    \mean\Bigl[
      \bigl(
        Z_{N} \mu_{N}\bigl[\mathcal{S}_{\mathcal{A}, N}\bigr]
      \bigr)^{k}\,
      \indicator_{\Xi_{N}(a)}
    \Bigr]^{1/k}\,
    \bigl(1 + O(\me^{-C_{1} N})\bigr).
  \end{align}
  By combining Lemma~\ref{lemma:moments_measure} and Proposition~\ref{prop:harm_estimate} (applied to the CWP model), yields, for all sufficiently large $N$,
  \begin{align}
    \mean\Bigl[
      \bigl(
        Z_{N} \mu_{N}\bigl[\mathcal{S}_{\mathcal{A}, N}\bigr]
      \bigr)^{k}\,
      \indicator_{\Xi_{N}(a)}
    \Bigr]^{1/k}
    &\;\leq\;
    \widetilde{Z}_{N} \tilde{\mu}_{N}\bigl[\mathcal{S}_{\mathcal{A}, N}\bigr]\,
    \me^{k \beta^{2} v/4 (1+o(1))}
    \nonumber\\[.5ex]
    &\;\leq\;
    \widetilde{Z}_{N}
    \norm{\tilde{h}^{N}_{\mathcal{A}_{N}, \mathcal{B}_{N}}}_{\tilde{\mu}_{N}}\,
    \me^{k \beta^{2} v/4 (1+o(1))}\,
    \bigl(1 + O(\me^{-C_{1} N})\bigr).
  \end{align}
  To bound the second term appearing on the right-hand side of \eqref{eq:annealed:minkowski:split}, we use both the Minkowski and Cauchy-Schwarz inequality. This gives
  \begin{align*}
    \mean\Bigl[
      \bigl(Z_{N}\, \indicator_{\Xi_{N}(a)^{c}}\bigr)^{k}
    \Bigr]^{1/k}
    \mspace{-6mu}
    &\;\leq\;
    \sum_{\sigma \in \mathcal{S}_{N}} \me^{-\beta \widetilde{H}_{N}(\sigma)}
    \mean\Bigl[
      \me^{-k \beta \Delta_{N}(\sigma)}\, \indicator_{\Xi_{N}(a)^{c}}
    \Bigr]^{1/k}
    \nonumber\\[.5ex]
    &\;\leq\;
    \sum_{\sigma \in \mathcal{S}_{N}} \me^{-\beta \widetilde{H}_{N}(\sigma)}
    \mean\Bigl[ \me^{-2k \beta \Delta_{N}(\sigma)} \Bigr]^{1/(2k)}
    \prob\bigl[ \Xi_{N}(a)^{c} \bigr]^{1/(2k)}
    \nonumber\\[.5ex]
    &\;\leq\;
    \widetilde{Z}_{N}\,
    \max_{\sigma \in \mathcal{S}_{N}}
    \mean\Bigl[ \me^{-2k \beta \Delta_{N}(\sigma)} \Bigr]^{1/(2k)}
    \prob\bigl[ \Xi_{N}(a)^{c} \bigr]^{1/(2k)}.
  \end{align*}
  By using the fact that $\widetilde{Z}_{N} \leq \exp(N(\beta/2 + \ln q))$ and Lemma~\ref{lemma:xi_comp} (apply (i) with $\beta$ replaced by $2 \beta k$ and (ii)), we obtain that, for all sufficiently large $N$,
  \begin{align}\label{eq:annealed:kth:est:tail}
    \mean\Bigl[
      \bigl(Z_{N}\, \indicator_{\Xi_{N}(a)^{c}}\bigr)^{k}
    \Bigr]^{1/k}
    \mspace{-6mu}
    \;\leq\;
    \me^{k \beta^{2} v/2 (1 + o(1))}\, \me^{N(\beta/2 + \ln q)}\, \me^{-N(a^{2}/v (1 + o(1)) - \ln q)/(2k)}.
  \end{align}
  Thus, by combining the above estimates and using the fact that $\widetilde{Z}_{N} \norm{\tilde{h}_{\mathcal{A}_{N}, \mathcal{B}_{N}}^{N}}_{\tilde{\mu}_{N}} \geq 1$, we conclude that, by our choice of $a$, there exists a $C_{3} \in (0, C_{1})$ such that, for all sufficiently large $N$,
  \begin{align}
    \mean\Bigl[
      \bigl(
        Z_{N} \norm{h^{N}_{\mathcal{A}_{N}, \mathcal{B}_{N}}}_{\mu_{N}}
      \bigr)^{k}
    \Bigr]^{1/k}
    \;\leq\;
    \widetilde{Z}_{N} \norm{\tilde{h}_{\mathcal{A}_{N}, \mathcal{B}_{N}}^{N}}_{\tilde{\mu}_{N}}\,
    \me^{k\beta^{2} v/4 (1+o(1))}\,
    \bigl(1 + \me^{-C_{3} N}\bigr).
  \end{align}
  It remains to prove a lower bound on the $k$-th moment of the harmonic sum. By Minkowski's inequality and Proposition~\ref{prop:harm_estimate} we get that
  \begin{align}\label{eq:annealed:minkowski:lb}
    &\mean\Bigl[
      \bigl(
        Z_{N} \norm{h^{N}_{\mathcal{A}_{N}, \mathcal{B}_{N}}}_{\mu_{N}}
      \bigr)^{k}
    \Bigr]^{1/k}
    \nonumber\\
    &\mspace{36mu}\geq\;
    \mean\Bigl[
      \bigl(
        Z_{N} \norm{h^{N}_{\mathcal{A}_{N}, \mathcal{B}_{N}}}_{\mu_{N}}
      \bigr)^{k}\,
      \indicator_{\Xi_{N}(a)}
    \Bigr]^{1/k}
    \nonumber\\
    &\mspace{36mu}\geq\;
    \biggl(
      \mean\Bigl[
        \bigl(
          Z_{N} \mu_{N}\bigl[\mathcal{S}_{\mathcal{A}, N}\bigr]
        \bigr)^{k}
      \Bigr]^{1/k}
      \!-
      \mean\Bigl[
        \bigl(
          Z_{N}\, \indicator_{\Xi_{N}(a)^{c}}
        \bigr)^{k}
      \Bigr]^{1/k}
    \biggr)\,
    \bigl(1 + O(\me^{-C_{1} N})\bigr).
  \end{align}
  By using Lemma~\ref{lemma:moments_measure} together with Proposition~\ref{prop:harm_estimate} (applied to the CWP model), we obtain that, for all sufficiently large $N$,
  \begin{align}\label{eq:annealed:mean:lb}
    \mean\Bigl[
      \bigl(
        Z_{N} \mu_{N}\bigl[\mathcal{S}_{\mathcal{A}, N}\bigr]
      \bigr)^{k}
    \Bigr]^{1/k}
    &\;\geq\;
    \widetilde{Z}_{N} \tilde{\mu}_{N}\bigl[\mathcal{S}_{\mathcal{A}, N}\bigr]\,
    \me^{\beta^{2} v/(4q) (1+o(1))}
    \nonumber\\[.5ex]
    &\;\geq\;
    \widetilde{Z}_{N}
    \norm{\tilde{h}^{N}_{\mathcal{A}_{N}, \mathcal{B}_{N}}}_{\tilde{\mu}_{N}}\,
    \me^{\beta^{2} v/(4q) (1+o(1))}\,
    \bigl(1 + O(\me^{-C_{1} N})\bigr).
  \end{align}
  Hence, by combining \eqref{eq:annealed:kth:est:tail} and \eqref{eq:annealed:mean:lb} with \eqref{eq:annealed:minkowski:lb} and using that $\widetilde{Z}_{N} \norm{\tilde{h}_{\mathcal{A}_{N}, \mathcal{B}_{N}}^{N}}_{\tilde{\mu}_{N}} \geq 1$, we finally get that, for all sufficiently large $N$,
  \begin{align}
    \mean\Bigl[
      \bigl(
        Z_{N} \norm{h^{N}_{\mathcal{A}_{N}, \mathcal{B}_{N}}}_{\mu_{N}}
      \bigr)^{k}
    \Bigr]^{1/k}
    \;\geq\;
    \widetilde{Z}_{N} \norm{\tilde{h}_{\mathcal{A}_{N}, \mathcal{B}_{N}}^{N}}_{\tilde{\mu}_{N}}\,
    \me^{\beta^{2} v/(4q) (1+o(1))}\,
    \bigl(1 - \me^{-C_{3} N}\bigr).
  \end{align}
  This completes the proof.
\end{proof}

Having completed the analysis of both the numerator (Section \ref{sec:final}) and the denominator (Section \ref{section:cap_mu}) of equation \eqref{eq:golden_formula}, we now proceed to prove Theorem \ref{theo:ratio_conc} and Theorem \ref{theo:ratio_mom}. The proof relies on Lemmas \ref{theo:conc_hit_time} and \ref{theo:ann_log_hitting}, providing a concentration inequality and an annealed estimate for the mean hitting time. 
\begin{lemma}\label{theo:conc_hit_time}
  For any $\beta > \beta_{1}$ let $\mathcal{A}_{N}$ and $\mathcal{B}_{N}$ be chosen as in Definition~\ref{def:regimes}. Then for any $t > 0$,
  %
  \begin{align}\label{eq:concentration:mean:ln}
    &
    \limsup_{N \to \infty}
    \prob\Bigl[
      \Bigl|
        \ln \Mean^{N}_{\nu_{\mathcal{A}_{N}, \mathcal{B}_{N}}}\bigl[\tau_{\mathcal{B}_{N}}^{N}\bigr]
        -
        \mean\Bigl[
          \ln \Mean^{N}_{\nu_{\mathcal{A}_{N}, \mathcal{B}_{N}}}\bigl[\tau_{\mathcal{B}_{N}}^{N}\bigr]
        \Bigr]
      \Bigr|
      > t
    \Bigr]
    \;\leq\;
    4\, \me^{-t^{2}/(8 \beta^{2} v)}.
  \end{align}
\end{lemma}
\begin{proof}
  In view of \eqref{eq:golden_formula}, we obtain for any $t>0$,
  \begin{align*}
    &\prob\Bigl[
      \Bigl|
        \ln \Mean^{N}_{\nu_{\mathcal{A}_{N}, \mathcal{B}_{N}}}\bigl[\tau_{\mathcal{B}_{N}}^{N}\bigr]
        -
        \mean\Bigl[
          \ln \Mean^{N}_{\nu_{\mathcal{A}_{N}, \mathcal{B}_{N}}}\bigl[\tau_{\mathcal{B}_{N}}^{N}\bigr]
        \Bigr]
      \Bigr|
      > t
    \Bigr]
    \nonumber\\[.5ex]
    &\mspace{36mu}\leq\;
    \prob\Bigl[
      \Bigl|
        \ln \bigl(Z_{N} \norm{h_{\mathcal{A}_{N}, \mathcal{B}_{N}}^{N}}_{\mu_{N}}\bigr)
        -
        \mean\Bigl[
          \ln \bigl(Z_{N} \norm{h_{\mathcal{A}_{N}, \mathcal{B}_{N}}^{N}}_{\mu_{N}}\bigr)
        \Bigr]
      \Bigr|
      > t/2
    \Bigr]
    \\
    &\mspace{72mu}+
    \prob\Bigl[
      \Bigl|
        \ln \bigl(Z_{N} \capacity_{N}(\mathcal{A}_{N}, \mathcal{B}_{N})\bigr)
        -
        \mean\Bigl[
          \ln \bigl(Z_{N} \capacity_{N}(\mathcal{A}_{N}, \mathcal{B}_{N})\bigr)
        \Bigr]
      \Bigr|
      > t/2
    \Bigr].
  \end{align*}
  Hence, \eqref{eq:concentration:mean:ln} follows immediate from \eqref{eq:concentration:hsum:ln} and \eqref{eq:conccap}. 
\end{proof}
\begin{lemma}\label{theo:ann_log_hitting}
  For any $\beta > \beta_{1}$ let $\mathcal{A}_{N}$ and $\mathcal{B}_{N}$ be chosen as in Definition~\ref{def:regimes}. Then for any sufficiently large $N$,
  \begin{align}\label{eq:annealed_log_hitting}
    - \frac{\beta^{2} v}{4}\, \bigl(1 + o(1)\bigr)
    \;\leq\;
    \mean\Bigl[
      \ln \Mean^{N}_{\nu_{\mathcal{A}_{N}, \mathcal{B}_{N}}}\bigl[ \tau_{\mathcal{B}_N}^N \bigr]
    \Bigr]
    -
    \ln \widetilde{\Mean}^{N}_{\tilde{\nu}_{\mathcal{A}_{N},\mathcal{B}_{N}}}\bigl[ \tilde{\tau}_{\mathcal{B}_{N}}^{N} \bigr]
    \;\leq\;
    \frac{\beta^{2}v}{2}\, \bigl(1 + o(1)\bigr).
  \end{align}
\end{lemma}
\begin{proof}
  In view of \eqref{eq:golden_formula}, the assertion follows immediately from Proposition~\ref{prop:mean_ln_hsum} and Proposition~\ref{theo:meanzcap}-(i).
\end{proof}
Although the proofs of Theorem~\ref{theo:ratio_conc} and \ref{theo:ratio_mom} follow along the lines of \cite[Theorem~2.12]{BdHMPS24}, we provide details for the reader's convenience.
\begin{proof}[Proof of Theorem~\ref{theo:ratio_conc}]
  First, note that Lemma~\ref{theo:ann_log_hitting} implies that for any $\varepsilon > 0$ and for all sufficiently large $N$,
  \begin{align*}
    \frac{
      \Mean_{\nu_{\mathcal{A}_{N}, \mathcal{B}_{N}}}^{N}\bigl[
        \tau_{\mathcal{B}_{N}}^{N}
      \bigr]\,
      \me^{-\beta^{2} v / 4 - \varepsilon}
    }{
      \exp\Bigl(
        \mean\bigl[
          \ln \Mean_{\nu_{\mathcal{A}_{N}, \mathcal{B}_{N}}}^{N}\bigl[
            \tau_{\mathcal{B}_{N}}^{N}
          \bigr]
        \bigr]
      \Bigr)
    }
    \;\leq\;
    \frac{
      \Mean_{\nu_{\mathcal{A}_{N}, \mathcal{B}_{N}}}^{N}\bigl[
        \tau_{\mathcal{B}_{N}}^{N}
      \bigr]\,
    }{
      \widetilde{\Mean}_{\tilde{\nu}_{\mathcal{A}_{N}, \mathcal{B}_{N}}}^{N}\bigl[
        \tilde{\tau}_{\mathcal{B}_{N}}^{N}
      \bigr]\,
    }
    \;\leq\;
    \frac{
      \Mean_{\nu_{\mathcal{A}_{N}, \mathcal{B}_{N}}}^{N}\bigl[
        \tau_{\mathcal{B}_{N}}^{N}
      \bigr]\,
      \me^{+\beta^{2} v / 2 + \varepsilon}
    }{
      \exp\Bigl(
        \mean\bigl[
          \ln \Mean_{\nu_{\mathcal{A}_{N}, \mathcal{B}_{N}}}^{N}\bigl[
            \tau_{\mathcal{B}_{N}}^{N}
          \bigr]
        \bigr]
      \Bigr)
    }.
  \end{align*}
  Hence for any $t > 0$ and any $\varepsilon \in (0, t)$, 
  \begin{align*}
    &\liminf_{N \to \infty}
    \prob\Biggl[
      \me^{-t - \beta^{2} v/4}
      \leq
      \frac{%
        \Mean^{N}_{\nu_{\mathcal{A}_{N}, \mathcal{B}_{N}}}\bigl[ \tau^{N}_{\mathcal{B}_N} \bigr]
      }{%
        \widetilde{\Mean}^{N}_{\tilde{\nu}_{\mathcal{A}_{N}, \mathcal{B}_{N}}}\bigl[\tilde{\tau}^N_{\mathcal{B}_N}\bigr]
      }
      \leq
      \me^{t + \beta^{2} v/2}
    \Biggr]
    \\
    &\mspace{36mu}\geq\;
    1 -
    \limsup_{N \to \infty}
    \prob\Bigl[
      \Bigl|
        \ln \Mean^{N}_{\nu_{\mathcal{A}_{N}, \mathcal{B}_{N}}}\bigl[\tau_{\mathcal{B}_{N}}^{N}\bigr]
        -
        \mean\Bigl[
          \ln \Mean^{N}_{\nu_{\mathcal{A}_{N}, \mathcal{B}_{N}}}\bigl[\tau_{\mathcal{B}_{N}}^{N}\bigr]
        \Bigr]
      \Bigr|
      > t - \varepsilon
    \Bigr]
    \\[1ex]
    &\mspace{31mu}\overset{%
      \mspace{-6mu}\eqref{eq:concentration:mean:ln}\mspace{-6mu}
    }{\;\geq\;}
    1 - 4 \me^{-(t-\varepsilon)^{2}/(8 \beta^{2} v)}.
  \end{align*}
  Since $\varepsilon \in (0, t)$ was arbitrary, letting $\varepsilon \downarrow 0$ gives the claimed result.
\end{proof}
\begin{proof}[Proof of Theorem~\ref{theo:ratio_mom}]
  Fix $k \geq 1$. Using the representation \eqref{eq:golden_formula} of the mean hitting time, the Cauchy-Schwarz inequality, Proposition~\ref{prop:mean_ln_hsum}-(ii) and Proposition~\ref{theo:meanzcap}-(ii) we obtain for all sufficiently large $N$,
  \begin{align}\label{eq:theo:moment:ub}
    \mean\Bigl[
      \Mean_{\nu_{\mathcal{A}_{N}, \mathcal{B}_{N}}^{N}}\bigl[
        \tau_{\mathcal{B}_{N}}^{N}
      \bigr]^{k}
    \Bigr]^{1/k}
    &\;\leq\;
    \mean\Bigl[
      \bigl(
        Z_{N} \norm{h^{N}_{\mathcal{A}_{N}, \mathcal{B}_{N}}}_{\mu_{N}}
      \bigr)^{2k}
    \Bigr]^{1/(2k)}
    \mean\Bigl[
      \bigl(
        Z_{N} \capacity_{N}(\mathcal{A}_{N}, \mathcal{B}_{N})
      \bigr)^{-2k}
    \Bigr]^{1/(2k)}
    \nonumber\\[.5ex]
    &\;\leq\;
    \widetilde{\Mean}_{\tilde{\nu}_{\mathcal{A}, \mathcal{B}_{N}}}^{N}\bigl[
      \tilde{\tau}_{\mathcal{B}_{N}}^{N}
    \bigr]\,
    \me^{k \beta^{2} v}\, \bigl( 1 + o(1) \bigr).
  \end{align}
  For the lower bound, Jensen's inequality together with Lemma~\ref{theo:ann_log_hitting} gives, for all sufficient large $N$,
  \begin{align}\label{eq:theo:moment:lb}
    \mean\Bigl[
      \Mean_{\nu_{\mathcal{A}_{N}, \mathcal{B}_{N}}^{N}}\bigl[
        \tau_{\mathcal{B}_{N}}^{N}
      \bigr]^{k}
    \Bigr]^{1/k}
    \;\geq\;
    \exp\Bigl(
      \mean\Bigl[
        \ln \Mean_{\nu_{\mathcal{A}_{N}, \mathcal{B}_{N}}^{N}}\bigl[
          \tau_{\mathcal{B}_{N}}^{N}
        \bigr]
      \Bigr]
    \Bigr)
    \overset{\eqref{eq:annealed_log_hitting}}{\;\geq\;}
    \widetilde{\Mean}_{\tilde{\nu}_{\mathcal{A}, \mathcal{B}_{N}}}^{N}\bigl[
      \tilde{\tau}_{\mathcal{B}_{N}}^{N}
    \bigr]\,
    \me^{-\beta^{2} v/4 (1 + o(1))}.
  \end{align}
  Combining the two bounds yields the assertion \eqref{eq:theo:ratio_mom}.
\end{proof}
%
%
%
%




\end{document}